\DeclareMathOperator*{\R}{Re}
\DeclareMathOperator*{\I}{Im}
\theoremstyle{thmstyleone}
\newtheorem{definition}{Definition}[section]
\newtheorem{theorem}{Theorem}[section]
\newtheorem{lemma}{Lemma}[section]
\newtheorem{remark}{Remark}[section]
\newtheorem{proposition}{Proposition}[section]
\numberwithin{equation}{section}
\begin{document}
\title[Article Title]{On well-posedness of the space-time fractional nonlinear Schr\"odinger equation}
\author*[1]{\fnm{Mingxuan} \sur{He}}\email{MingxuanHe001@126.com}
\author[1]{\fnm{Na} \sur{Deng}}\email{202121511122@smail.xtu.edu.cn}
\author[1]{\fnm{Lu} \sur{Zhang}}\email{lzhang@xtu.edu.cn}
\affil[1]{\orgdiv{School of Mathematics and Computational Science}, \orgname{Xiangtan University}, \orgaddress{\city{Xiangtan}, \postcode{411105}, \state{Hunan}, \country{China}}}
\abstract{We study the Cauhcy problem for space-time fractional nonlinear Schr\"odinger equation with a general nonlinearity. We prove the local well-posedness of it in fractional Sobolev spaces based on the decay estimates and H\"older type estimates. Due to the lack of the semigroup structure of the solution operators, we deduce the decay estimates and H\"older type estimates via the asymptotic expansion of the Mittag-Leffler functions and Bessel functions. In particular, these results also show the dispersion of the solutions.}
\keywords{Space-time fractional Schr\"odinger equation, Well-posedness, Dispersion, Decay and H\"older type estimates, A priori estimates}
\maketitle
\section{Introduction}
\subsection{Notations}
By $a\lesssim b$, we mean that there exists a positive constant $C$ such that $a\leq Cb$ and by $a\sim b$, we mean that $a\lesssim b\lesssim a$. We denote the $\max\{a,b\}$ by the notation $a\vee b$ and $\min\{a,b\}$ by the notation $a\wedge b$. The Fourier transform of $u$ with respect to the space variable $x$ will be written as $\mathscr{F}u$ or $\widehat{u}$ and the inverse Fourier transform of $u$ with respect to the space variable $x$ will be written as $\mathscr{F}^{-1}u$ or $u^\vee$.

In this paper, we use $L_x^p$ to denote $L^p(\mathbb{R}^n)$ and $H_x^{s,p}$ to denote the fractional Sobolev spaces $H^{s,p}(\mathbb{R}^n)$ whose definition is $H_x^{s,p}=\left\langle\nabla\right\rangle^{-s}L_x^p$ and norm is $\left\lVert\left\langle\nabla\right\rangle^su\right\rVert_{L_x^p}$. In particular, we will abbreviate $H_x^{s,2}$ to $H_x^s$. $\left\langle\nabla\right\rangle^s$ denotes the Bessel potential of order $-s$ (i.e. $\left\langle\nabla\right\rangle^su=\mathscr{F}^{-1}\left(\left\langle\xi\right\rangle^s\widehat{u}\right)=\mathscr{F}^{-1}\left(\left(1+\left\lvert\xi\right\rvert^2\right)^{\frac{s}{2}}\widehat{u}\right)$), while $\left\lvert\nabla\right\rvert^s$ denotes the Riesz potential of order $-s$ (i.e. $\left\lvert\nabla\right\rvert^su=\mathscr{F}^{-1}\left(\left\lvert\xi\right\rvert^s\widehat{u}\right)$).

Let $X$ be a Banach space. We use $L_T^pX$ to denote $L^p\left((0,T);X\right)$ and $L_t^pX$ to denote $L^p\left((0,\infty);X\right)$ for $1\leq p\leq\infty$.

In the end, the constant $\sigma$ used throughout this paper stands for $\sigma=\frac{\alpha}{2\beta}$.
\subsection{Background and main results}
Over the past few centuries, fractional calculus has been discussed and studied by mathematicians such as Leibniz ,Euler, Abel, Riemann and Liouville as a pure mathematical technique with no practical applications. But since the last century, fractional calculus has proved to be useful in most areas of science due to the nonlocal characteristics of fractional differentiation, and has gradually been taken seriously by mathematicians, physicists, engineers and economists.

One of the most important physical models that has been extensively studied by many contributing authors such as Kato\cite{On-nonlinear-Schrodinger-equations, On-nonlinear-Schrodinger-equations-II-HS-solutions-and-unconditional-well-posedness}, Ginibre et al.\cite{On-a-class-of-nonlinear-Schrodinger-equations-I-The-Cauchy-problem-general-case, The-global-Cauchy-problem-for-the-nonlinear-Schrodinger-equation-revisited}, Nakamura et al.\cite{Modified-Strichartz-estimates-with-an-application-to-the-critical-nonlinear-Schrodinger-equation, Strichartz-type-estimates-in-mixed-Besov-spaces-with-application-to-critical-nonlinear-Schrodinger-equations}, Cazenave\cite{Semilinear-Schrodinger-equations}, Kenig\cite{Small-solutions-to-nonlinear-Schrodinger-equations} is the Schr\"odinger equation. The fractional generalization of Schr\"odinger equation is mainly divided into three fields. One of them is the spatial fractional version of the Schr\"odinger equation introduced by Laskin\cite{Fractional-Schrodinger-equation} as a fundamental equation of the fractional quantum mechanics\cite{Fractional-quantum-mechanics, Fractional-quantum-mechanics-and-Levy-path-integrals, Fractals-and-quantum-mechanics} whose form is given by $i\partial_tu-\left(-\Delta\right)^\beta u+F(u)=0$ where $\left(-\Delta\right)^\beta$ denotes the fractioal Laplacian which is the Fourier multiplier of symbol $\left\lvert\xi\right\rvert^{2\beta}$. For results on the well-posedness of it, we refer readers to \cite{On-Fractional-Schrodinger-Equations-in-sobolev-spaces,Existence-of-the-global-smooth-solution-to-the-period-boundary-value-problem-of-fractional-nonlinear-Schrodinger-equation,Global-Well-Posedness-for-the-Fractional-Nonlinear-Schrodinger-Equation}. The second is the time fractional Schr\"odinger equation which is given by $i^\Box\partial_t^\alpha u+\Delta u+F(u)=0$ where $\Box$ could be $1$ or $\alpha$. The case $\Box=\alpha$ is introduced by Naber\cite{Time-fractional-Schrodinger-equation} who use Wick rotation to raise a fractional power of $i$ which turns out to be the classical Schr\"odinger equations with a time dependent Hamiltonian. The case $\Box=1$ is introduced by Achar et al.\cite{Time-Fractional-Schrodinger-Equation-Revisited} who derive it using the Feynman paths method. $\partial_t^\alpha$ with order $\alpha\in(0,1)$ introduced by Caputo\cite{Linear-Models-of-Dissipation-whose-Q-is-almost-Frequency-Independent-II} denotes the Caputo derivative and is given by
\[ \partial_t^\alpha u=\frac{d}{dt}\left(\frac{1}{\Gamma(1-\alpha)}\int_0^t\left(t-\tau\right)^{-\alpha}\left(u(\tau)-u(0)\right)d\tau\right). \]

Caputo derivative is widely used in several scientific fields such as statistical mechanics, theoretical physics, theoritical chemistry, fluid mechanics and mathematical finance. We refer readers to \cite{Fractional-differential-equations-an-introduction-to-fractional-derivatives-fractional-differential-equations-to-methods-of-their-solution-and-some-of-their-applications, Theory-and-Applications-of-Fractional-Differential-Equations} for more details of the Caputo derivative. Some results about the case $\Box=\alpha$ can be found in the works\cite{The-Time-Fractional-Schrodinger-Equation-on-Hilbert-Space,The-asymptotic-behavior-of-the-time-fractional-Schrodinger-equation-on-Hilbert-space} and the case $\Box=1$ can be found in the works\cite{Fractional-Schrodinger-equations-with-potential-and-optimal-controls,The-well-posedness-for-fractional-nonlinear-Schrodinger-equations,Duhamels-formula-for-time-fractional-Schrodinger-equations,On-the-Stability-of-Time-Fractional-Schrodinger-Differential-Equations}. The third is the space-time fractional Schr\"odinger equation given by $i^\Box\partial_t^\alpha u-\left(-\Delta\right)^\beta u+F(u)=0$. Similarly, $\Box$ could be $1$ or $\alpha$. For the case $\Box=\alpha$, Lee\cite{Strichartz-estimates-for-space-time-fractional-Schrodinger-equations} derived the Strichartz estimates of the solution operator of it. Grande\cite{Space-Time-Fractional-Nonlinear-Schrodinger-Equation} studied the well-posedness and ill-posedness of it in one dimension with the nonlinearity in the form of even degree polynomials. Dong et al.\cite{Space-time-fractional-Schrodinger-equation-with-time-independent-potentials} obtained some results of it with time-independent potentials.

The case $\Box=1$ is of interest for this article. More precisely, this paper is concerned with the Cauchy problem for the space-time fractional nonlinear Schr\"odinger equation
\begin{equation}
\begin{cases}
i\partial_t^\alpha u-\left(-\Delta\right)^\beta u+F(u)=0,\quad&x\in\mathbb{R}^n,\;t>0,\\
u(0,x)=u_0(x),\quad&x\in\mathbb{R}^n.
\end{cases}
\label{9.5 space time fractional Schrodinger equation}
\end{equation}

In previous studies on $(\ref{9.5 space time fractional Schrodinger equation})$, more of them used numerical methods to study it; see \cite{A-numerical-method-for-solving-the-time-fractional-Schrodinger-equation, Application-of-Residual-Power-Series-Method-for-the-Solution-of-Time-fractional-Schrodinger-Equations-in-One-dimensional-Space, Jacobi-spectral-collocation-approximation-for-multi-dimensional-time-fractional-Schrodinger-equations} for instance and references therein. There are few studies on the well-posedness and behavior of solutions. Although Su et al.\cite{Dispersive-estimates-for-time-and-space-fractional-Schrodinger-equations, Local-well-posedness-of-semilinear-space-time-fractional-Schrodinger-equation} gave the dispersive estimates of the solution operator in the homogeneous case and studied the local well-posedness of it in the space $C_TL_x^r\cap L_T^qL_x^p$ with $F(u)=\mu\left\lvert u\right\rvert^\theta u$ under certain technical conditions for $(q,p,r,\theta)$, their results have certain limitations, the nonlinearity being so special and the space where the solution exists being so wide that lose the regularity.

In this article, we will examine this equation in a broader sense, that is, the nonlinearity we consider is more general and the spaces the solution exists in have arbitrary regularity and integrability. To be more precise, the nonlinearity $F(\cdot)$ is a complex function with the following hypotheses:
\begin{equation}
F\in C^1\left(\mathbb{C};\mathbb{C}\right),\quad F(0)=0,\quad\left\lvert F'(z)\right\rvert\lesssim\left\lvert z\right\rvert^{p-1}\;\mathrm{for}\;1\leq p<\infty,
\label{hypothesis on nonlinear F}
\end{equation}
\begin{equation}
\I\left(\overline{z}F(z)\right)=0,
\label{10.8 hypothesis on nonlinear F2}
\end{equation}
\begin{equation}
\begin{aligned}
&\mathrm{There\;exists\;a\;complex\;function}\;G(\cdot)\in C\left(\mathbb{C};\mathbb{R}\right)\;\mathrm{satisfying}\\
&G(z)\geq0\;\mathrm{for\;any}\;z\in\mathbb{C}\;\mathrm{such\;that\;for\;any\;complex\;function}\;u\\
&\mathrm{we\;have}\;\R\left(F(u)\partial_t^\alpha\overline{u}\right)\lesssim-\partial_t^\alpha G(u).
\end{aligned}
\label{10.8 hypothesis on nonlinear F3}
\end{equation}

It is well-known that the mass ($L_x^2$ norm) and the energy of the solution to $(\ref{9.5 space time fractional Schrodinger equation})$ in the case $\alpha=1$ is conserved with respect to time but in the case $\alpha\in(0,1)$ is not. Specifically, we have
\begin{equation}
\left\lVert u(t)\right\rVert_{L_x^2}\lesssim\left\lVert u_0\right\rVert_{L_x^2},\quad I(u(t))\lesssim I(u_0),
\label{11.11 nonconservation law}
\end{equation}
where the energy is defined by
\[ I(u(t))=\left\lVert\left(-\Delta\right)^{\frac{\beta}{2}}u(t)\right\rVert_{L_x^2}^2+\int_{\mathbb{R}^n}G(u(t))dx. \]

The proof will be given in Lemmas $\ref{10.8A priori estimates1}$ and $\ref{10.8A priori estimates2}$.

By Fourier transform and the theory of fractional ordinary differential equations, we can write the solution of $(\ref{9.5 space time fractional Schrodinger equation})$ as (see Appendix $\ref{20240521The derivation of integral equation}$ for details):
\begin{equation}
u=S_tu_0+i\mathcal{M}F(u),
\label{9.5 The equivalent integral equation equation1}
\end{equation}
where
\[ \mathcal{M}v(t)=\int_0^tP_{t-\tau}v(\tau)d\tau, \]
and
\begin{align*}
&S_t\phi=\mathscr{F}^{-1}\left(a_t(\xi)\widehat{\phi}\right),\quad a_t(\xi)=E_{\alpha,1}\left(-i\left\lvert\xi\right\rvert^{2\beta}t^\alpha\right),\\
&P_t\phi=\mathscr{F}^{-1}\left(b_t(\xi)\widehat{\phi}\right),\quad b_t(\xi)=t^{\alpha-1}E_{\alpha,\alpha}\left(-i\left\lvert\xi\right\rvert^{2\beta}t^\alpha\right).
\end{align*}

$E_{\alpha,1}$ and $E_{\alpha,\alpha}$ denote the Mittag-Leffler function whose definition is
\[ E_{\alpha,\beta}(z)=\sum\limits_{k=0}^\infty\frac{z^k}{\Gamma(\alpha k+\beta)},\quad \alpha,\beta, z\in\mathbb{C}. \]

As in the case of integer order $(\alpha=1)$, $S_t, P_t$ will be called evolution operators.

In previous studies, to estimate the evolution operators, people exploited the relationship between Mittag-Leffler function and Mainardi function to obtain
\begin{equation}
\label{20240503ML Mainardi function}
S_t=\int_0^\infty M_\alpha(\theta)e^{-i\theta t^\alpha\left(-\Delta\right)^\beta}d\theta,\quad P_t=\int_0^\infty\alpha\theta M_\alpha(\theta)t^{\alpha-1}e^{-i\theta t^\alpha\left(-\Delta\right)^\beta}d\theta.
\end{equation}
If we already have some estimates for the fractional Schr\"odinger operator $e^{-it\left(-\Delta\right)^\beta}$, we can address the same estimates by the property of Mainardi function (see $(\ref{20240503Mainardi function integral})$). We refer readers to Appendix $\ref{20240503The derivation}$ for detailed derivation. Peng et al.\cite{The-well-posedness-for-fractional-nonlinear-Schrodinger-equations} obtain the decay estimates in the case $\beta=1$ in such a way. However, in this way, we cannot obtain an estimate that $e^{-it\left(-\Delta\right)^\beta}$ does not have. Indeed, due to the lack of the dispersive estimates for $e^{-it\left(-\Delta\right)^\beta}$, we cannot obtain the dispersive estimates as well as the decay estimates of $S_t$ and $P_t$ using the method above. Besides, the H\"older type estimates will also be complicated. Motivated by Grande\cite{Space-Time-Fractional-Nonlinear-Schrodinger-Equation}, we take advantage of the asymptotic expansion of Mittag-Leffler function to decompose the evolution operators into low frequency terms plus high frequency terms and estimates them respectively, so that we can obtain the $L^q-L^r$ decay estimates and the $L^q-L^r$ H\"older type estimates. A more detailed discussion will be given in $\S\ref{Estimates of the linear operators}$.

The first part of the main results is the local well-posedness of $(\ref{9.5 space time fractional Schrodinger equation})$. We first give the meaning of the continuous dependence. Let $u\in Y$ be the unique solution of $(\ref{9.5 space time fractional Schrodinger equation})$ with initial data $u_0\in X$ and $u_0^k\to u_0$ in $X$ as $k\to\infty$. Let $u_k$ be the solution of $(\ref{9.5 space time fractional Schrodinger equation})$ with initial data $u_k(0)=u_0^k$. We call the map $u_0\mapsto u(t)$ is continuous if the solution $u_k$ exists in $Y$ when $k$ is sufficiently large, and $u_k\to u$ in $Y$ as $k\to\infty$.
\begin{theorem}
\label{20240515locla well-posendess}
Let $s\geq0$, $\beta>\frac{n}{2}$, $2\leq p<\infty$, $1\leq r\leq\infty$, $1\leq q\leq\infty$ satisfying $\frac{\sigma n}{q}<\frac{\alpha}{p-1}$ and $F$ satisfy the hypothesis $(\ref{hypothesis on nonlinear F})$. For any $\gamma\in\left(\frac{\sigma n}{q},\frac{\alpha}{p-1}\right)$, there exists $T_{\max}>0$ satisfying
\[ T_{\max}<\infty\Longrightarrow\lim\limits_{t\uparrow T_{\max}}\left\lVert u(t)\right\rVert_{H_x^{s,r}}=\infty,\;{\rm or}\;\lim\limits_{t\uparrow T_{\max}}t^\gamma\left\lVert u(t)\right\rVert_{L_x^\infty}=\infty \]
such that $(\ref{9.5 space time fractional Schrodinger equation})$ admits a unique solution in the class
\[ u\in C\left(\left[0,T_{\max}\right);H_x^{s,r}\right),\quad t^\gamma u\in C\left(\left[0,T_{\max}\right);L_x^\infty\right) \]
with $u_0\in L_x^q\cap H_x^{s,r}$. If, in addition, $\gamma\in\left(\frac{\sigma n}{q},\frac{\alpha}{p}\right)$, the mapping $u_0\mapsto u(t)$ is continuous.
\end{theorem}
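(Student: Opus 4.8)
The plan is to obtain the solution as a fixed point of the map $\Phi(u)=S_tu_0+i\mathcal M F(u)$ associated with the integral equation $(\ref{9.5 The equivalent integral equation equation1})$, working on a closed ball of the complete metric space
\[
Y_T=\Bigl\{u:\ \|u\|_{Y_T}:=\sup_{0\le t\le T}\|u(t)\|_{H_x^{s,r}}+\sup_{0<t\le T}t^{\gamma}\|u(t)\|_{L_x^\infty}<\infty\Bigr\}
\]
for a suitably small $T$. Since $S_t$ and $P_t$ are Fourier multipliers they commute with $\langle\nabla\rangle^s$, so the linear term is handled directly by the decay and boundedness estimates of $\S\ref{Estimates of the linear operators}$: its $H_x^{s,r}$ part comes from the $L^r\to L^r$ bound for $S_t$, and its weighted part from the decay $\|S_tu_0\|_{L_x^\infty}\lesssim t^{-\sigma n/q}\|u_0\|_{L_x^q}$. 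This is where the lower bound $\gamma>\sigma n/q$ enters: it makes $t^{\gamma}\|S_tu_0\|_{L_x^\infty}\lesssim t^{\gamma-\sigma n/q}\|u_0\|_{L_x^q}$ vanish as $t\downarrow0$, placing $S_tu_0$ in $Y_T$ and, through the H\"older type estimates, giving continuity at $t=0$.

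The heart of the matter is the nonlinear term $\mathcal M F(u)=\int_0^tP_{t-\tau}F(u(\tau))\,d\tau$. From $(\ref{hypothesis on nonlinear F})$ I would first record $|F(z)|\lesssim|z|^{p}$ and $|F(z)-F(w)|\lesssim(|z|^{p-1}+|w|^{p-1})|z-w|$. For the $H_x^{s,r}$ component, combining the bound $\|P_{t-\tau}\|_{L^r\to L^r}\lesssim(t-\tau)^{\alpha-1}$ with a fractional Leibniz/chain rule gives $\|F(u)\|_{H_x^{s,r}}\lesssim\|u\|_{L_x^\infty}^{p-1}\|u\|_{H_x^{s,r}}$; inserting $\|u(\tau)\|_{L_x^\infty}\lesssim\|u\|_{Y_T}\tau^{-\gamma}$ reduces the estimate to the Beta integral $\int_0^t(t-\tau)^{\alpha-1}\tau^{-\gamma(p-1)}\,d\tau\sim t^{\alpha-\gamma(p-1)}B\bigl(\alpha,1-\gamma(p-1)\bigr)$, which is finite and carries a positive power of $t$ precisely because $\gamma(p-1)<\alpha$, i.e. $\gamma<\frac{\alpha}{p-1}$. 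For the weighted $L_x^\infty$ component I would use a decay estimate $\|P_{t-\tau}\|_{L^a\to L^\infty}\lesssim(t-\tau)^{\alpha-1-\sigma n/a}$ and bound $\|F(u(\tau))\|_{L^a}$ by interpolating between the two quantities the solution carries, $\|u\|_{L_x^r}$ and $\|u\|_{L_x^\infty}$, choosing the intermediate exponent $a$ to balance the endpoints of the resulting time integral. In all of these estimates the decay exponent of $P_{t-\tau}$ is of the form $\alpha-1-\sigma n/a$, and the requirement that it exceed $-1$ at the endpoint $\tau\to t$ is exactly $\sigma n<\alpha$; this is precisely where the hypothesis $\beta>\frac n2$ is used.

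Running the same computations on $F(u)-F(v)$ gives the contraction estimate, and the Banach fixed point theorem then produces a unique local solution on $[0,T]$; the H\"older type estimates upgrade it to $u\in C([0,T];H_x^{s,r})$ with $t^\gamma u\in C([0,T];L_x^\infty)$. Defining $T_{\max}$ as the supremum of existence times, I would prove the blow-up alternative by contradiction: if $T_{\max}<\infty$ while both $\|u(t)\|_{H_x^{s,r}}$ and $t^\gamma\|u(t)\|_{L_x^\infty}$ stayed bounded on $[0,T_{\max})$, the uniform bounds and the same Beta integrals would give a local existence time bounded below near $T_{\max}$, allowing continuation past $T_{\max}$. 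I expect this continuation to be the main obstacle: because $S_t,P_t$ lack the semigroup property, one cannot translate the equation in time and restart, so the extension must be carried out directly on the integral equation over the whole interval, retaining the full memory term $\int_0^{t}P_{t-\tau}F(u)\,d\tau$.

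Finally, for continuous dependence I would estimate $\|u_k-u\|_{Y_T}$ through the difference bound for $F$. Here the factor measuring $u_k-u$ in $L_x^\infty$ carries the weight $\tau^{-\gamma}$ without the extra integrability that the solution's own $H_x^{s,r}$ regularity supplied above, so the governing time integral becomes $\int_0^t(t-\tau)^{\alpha-1}\tau^{-\gamma p}\,d\tau=t^{\alpha-\gamma p}B(\alpha,1-\gamma p)$, whose convergence together with a positive power of $t$ forces the strictly stronger restriction $\gamma p<\alpha$, i.e. $\gamma<\frac{\alpha}{p}$ — exactly the additional hypothesis under which continuous dependence is asserted.
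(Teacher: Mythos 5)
Your overall strategy coincides with the paper's: a contraction argument for $\Phi u=S_tu_0+i\mathcal{M}F(u)$ in the weighted space $\left\{u\in L_T^\infty H_x^{s,r}:t^\gamma u\in L_T^\infty L_x^\infty\right\}$, with exactly the paper's exponent bookkeeping (the linear term needs $\gamma>\frac{\sigma n}{q}$, the Beta integral $\int_0^t(t-\tau)^{\alpha-1}\tau^{-\gamma(p-1)}d\tau$ needs $\gamma<\frac{\alpha}{p-1}$, and $\beta>\frac{n}{2}$ is what makes $\sigma n<\alpha$ so that the $L^a\to L^\infty$ decay of $P_{t-\tau}$ stays integrable). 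You also correctly identify the obstruction to continuation — no time-translation invariance, so the extension past $T$ must be performed on the full integral equation with its memory term — which is precisely the content of the paper's Lemma \ref{20240516continuation lemma}, proved there by a contraction on the set of extensions that agree with $u$ on $[0,T]$. One harmless difference: you contract in the full norm, which requires the $H_x^{s,r}$ difference estimate for $F$ (the paper's Lemma \ref{10.249.24 Main results theorem1 proof lemma3}, available since $p\geq2$), whereas the paper contracts in the weaker metric $\left\lVert u-v\right\rVert_{L_T^\infty L_x^r}+\left\lVert t^\gamma(u-v)\right\rVert_{L_T^\infty L_x^\infty}$ on a ball of the strong norm, needing only Lemma \ref{10.10 Main results theorem1 proof lemma2}.

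There is, however, one genuine gap, in the continuous-dependence step. Your plan is to close the estimate for $u_k-u$ by the "positive power of $t$", i.e.\ by absorbing a term of size $CT^{\alpha-\gamma p}$ times the norm of $u_k-u$; this works only when $T$ is small enough that $CT^{\alpha-\gamma p}<1$, and — as you yourself observe in the continuation discussion — you cannot iterate in time to reach an arbitrary $T<T_{\max}$, because the equation cannot be restarted at a later time. The paper instead closes this step with the singular Gronwall inequality of Henry type (Lemma \ref{10.27Gronwall inequality lemma}), applied to $\phi_k(t)=\left\lVert u_k(t)-u(t)\right\rVert_{H_x^{s,r}}+t^\gamma\left\lVert u_k(t)-u(t)\right\rVert_{L_x^\infty}$ with kernel $(t-\tau)^{\alpha-1}\tau^{-\gamma p}$; the hypothesis of that lemma, $\alpha+(1-\gamma p)>1$, is exactly $\gamma<\frac{\alpha}{p}$, so the restriction you identified is what lets Gronwall run on the whole interval, not what makes an absorption argument work. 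Relatedly, your sketch takes for granted that $u_k$ exists on all of $[0,T]$ for large $k$; this is part of the claim and must be proved. The paper does it by a bootstrap: define $T_k$ as the largest time up to which $u_k$ obeys the bound $M$ of $u$, run the Gronwall estimate on $[0,T_k\wedge T]$ to see that $u_k$ stays strictly below $M$ there for large $k$, and invoke Lemma \ref{20240516continuation lemma} to conclude $T_k>T$, hence $T_{\max}^k>T$. With these two repairs your argument matches the paper's proof.
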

\begin{theorem}
\label{20240518locla well-posendessII}
Let $0\leq s<\frac{2\beta}{\alpha}$, $\beta>\frac{n}{2}$, $2\leq p<\infty$, $1\leq r\leq\infty$ satisfying $\frac{\sigma n}{r}<\left(\frac{\alpha}{p-1}\wedge\frac{1-\sigma s}{p-1}\wedge\frac{1}{p}\right)$ and $F$ satisfy the hypothesis $(\ref{hypothesis on nonlinear F})$. For any $\gamma\in\left(\frac{\sigma n}{r},\frac{\alpha}{p-1}\wedge\frac{1-\sigma s}{p-1}\wedge\frac{1}{p}\right)$, there exists
$T_{\max}>0$ satisfying
\[ T_{\max}<\infty\Longrightarrow\lim\limits_{t\uparrow T_{\max}}t^{\sigma s}\left\lVert u(t)\right\rVert_{H_x^{s,r}}=\infty,\;{\rm or}\;\lim\limits_{t\uparrow T_{\max}}t^\gamma\left\lVert u(t)\right\rVert_{L_x^\infty}=\infty \]
such that $(\ref{9.5 space time fractional Schrodinger equation})$ admits a unique solution in the class
\[ t^{\sigma s}u\in C\left(\left[0,T_{\max}\right);H_x^{s,r}\right),\quad t^\gamma u\in C\left(\left[0,T_{\max}\right);L_x^\infty\right) \]
with $u_0\in L_x^r$. If, in addition, $\gamma\in\left(\frac{\sigma n}{r},\frac{\alpha}{p}\wedge\frac{1-\sigma s}{p-1}\right)$, the mapping $u_0\mapsto u(t)$ is continuous.
\end{theorem}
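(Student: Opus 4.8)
The plan is to solve the Duhamel equation $u=S_tu_0+i\mathcal{M}F(u)$ by the contraction mapping principle in a complete metric space that carries the time weights appearing in the statement. For $T,M>0$ set
\[ E=\left\{u:\ \|u\|_E\leq M\right\},\quad \|u\|_E=\sup_{0<t<T}\|u(t)\|_{L_x^r}+\sup_{0<t<T}t^{\sigma s}\|u(t)\|_{H_x^{s,r}}+\sup_{0<t<T}t^\gamma\|u(t)\|_{L_x^\infty}, \]
and equip the ball $E$ with the weaker distance $d(u,v)=\sup_{0<t<T}\|u-v\|_{L_x^r}+\sup_{0<t<T}t^\gamma\|u-v\|_{L_x^\infty}$; as in the work of Kato and Cazenave, $(E,d)$ is complete because a $d$-Cauchy sequence converges in $L_x^r$ and in $t^\gamma L_x^\infty$ while the stronger bound $\|\cdot\|_E\leq M$ passes to the limit by lower semicontinuity (the uniform $L_x^r$ control is natural here and reduces to the stated class when $s=0$). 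I would then show that $\Phi(u):=S_tu_0+i\mathcal{M}F(u)$ maps $E$ into itself and is a $d$-contraction for $T$ small.

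For the linear part I would invoke the decay and smoothing estimates of $\S\ref{Estimates of the linear operators}$ to get $\|S_tu_0\|_{L_x^r}\lesssim\|u_0\|_{L_x^r}$, $t^{\sigma s}\|S_tu_0\|_{H_x^{s,r}}\lesssim\|u_0\|_{L_x^r}$ and $t^\gamma\|S_tu_0\|_{L_x^\infty}\lesssim t^{\gamma-\sigma n/r}\|u_0\|_{L_x^r}$, the last of which is bounded (and vanishes as $t\downarrow0$) precisely because $\gamma>\sigma n/r$. For the nonlinear part I would combine $|F(z)|\lesssim|z|^p$, a fractional chain rule of the form $\|F(u)\|_{H_x^{s,r}}\lesssim\|u\|_{L_x^\infty}^{p-1}\|u\|_{H_x^{s,r}}$, and the interpolation $\|u\|_{L_x^{pr}}\lesssim\|u\|_{L_x^r}^{1/p}\|u\|_{L_x^\infty}^{1-1/p}$ to bound $\|F(u(\tau))\|_{L_x^r}\lesssim\tau^{-\gamma(p-1)}M^p$ and $\|F(u(\tau))\|_{H_x^{s,r}}\lesssim\tau^{-\gamma(p-1)-\sigma s}M^p$. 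Feeding these into $\mathcal{M}$ and using the decay of $P_t$ turns each component of $\|\Phi(u)\|_E$ into a Beta-type integral $\int_0^t(t-\tau)^{a}\tau^{b}\,d\tau$; the hypotheses $s<\tfrac{2\beta}{\alpha}$ (so $\sigma s<1$), $\gamma<\tfrac{1-\sigma s}{p-1}$ and $\gamma<\tfrac1p$ are exactly the conditions $b>-1$ that make these integrals converge, while $\gamma<\tfrac{\alpha}{p-1}$ together with $\gamma>\tfrac{\sigma n}{r}$ forces the surviving power of $t$ to be strictly positive. This yields $\|\Phi(u)\|_E\leq C\|u_0\|_{L_x^r}+CT^\theta M^p$ with $\theta>0$, so taking $M\simeq\|u_0\|_{L_x^r}$ and $T$ small gives $\Phi(E)\subset E$.

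The contraction is obtained the same way from $|F(u)-F(v)|\lesssim(|u|^{p-1}+|v|^{p-1})|u-v|$, which gives $d(\Phi u,\Phi v)\leq CT^\theta M^{p-1}d(u,v)$; shrinking $T$ makes the constant $\leq\tfrac12$, and Banach's theorem produces the unique local solution. Continuity in time, including $t^{\sigma s}\|u(t)\|_{H_x^{s,r}}\to0$ and $t^\gamma\|u(t)\|_{L_x^\infty}\to0$ as $t\downarrow0$, would follow from the Hölder-type estimates of $\S\ref{Estimates of the linear operators}$ together with a density argument. I would then let $T_{\max}$ be the supremum of existence times; since the local time furnished above depends only on the size of $u$ in the norm $\|\cdot\|_E$, the blowup alternative follows by rerunning the fixed point near $T_{\max}$. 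Here the absence of a semigroup law for $S_t,P_t$ is felt: because the Caputo derivative has memory, one cannot simply restart at an interior time but must re-solve the integral equation on the longer interval using the already-constructed solution on $[0,T_{\max})$ and its controlled norm.

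Finally, under the sharper restriction $\gamma<\tfrac{\alpha}{p}$ I would prove continuous dependence by estimating the difference of two solutions in the full $\|\cdot\|_E$-norm, now carrying the time weight on every factor — including the difference itself — so that the controlling fractional integral converges to a positive power of $T$ exactly when $\gamma<\tfrac{\alpha}{p}$; combined with $u_0^k\to u_0$ in $L_x^r$ this gives $\|u_k-u\|_E\to0$. The main obstacle I anticipate is twofold, and both facets concern the nonlinearity interacting with regularity: first, with only $F\in C^1$ the fractional chain rule controlling $\|F(u)\|_{H_x^{s,r}}$ (and the difference $\|F(u_k)-F(u)\|_{H_x^{s,r}}$) for non-integer or large $s<\tfrac{2\beta}{\alpha}$ is delicate and must be handled by interpolation rather than by direct differentiation; second, because $S_t$ and $P_t$ are not a semigroup, all of these weighted bounds have to be threaded through the single Duhamel integral with careful Beta-function bookkeeping, and it is this bookkeeping that pins down the admissible range of $(\gamma,p,r,s)$.
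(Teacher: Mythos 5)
Your overall strategy coincides with the paper's. The paper proves this theorem by repeating, with the weights $t^{\sigma s}$ and $t^{\gamma}$, the three-step proof of Theorem $\ref{20240515locla well-posendess}$: a contraction argument in a ball of a time-weighted space equipped with the weaker metric $\left\lVert u-v\right\rVert_{L_T^\infty L_x^r}+\left\lVert t^\gamma(u-v)\right\rVert_{L_T^\infty L_x^\infty}$, a continuation lemma (Lemma $\ref{20240516continuation lemma}$) proved by a second contraction on an extension space of functions that agree with $u$ on $[0,T]$ (this is the paper's substitute for restarting, which the memory of the Caputo derivative forbids --- exactly the point you raise), and the blow-up alternative deduced from continuation. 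Your Beta-function bookkeeping also matches the paper's: $\gamma(p-1)+\sigma s<1$ and $\gamma p<1$ are the integrability conditions, and $\gamma>\frac{\sigma n}{r}$ is what tames $t^{\gamma-\frac{\sigma n}{r}}$. (One caveat you share with the paper: the smoothing bound $t^{\sigma s}\left\lVert S_tu_0\right\rVert_{H_x^{s,r}}\lesssim\left\lVert u_0\right\rVert_{L_x^r}$ is only furnished by Proposition $\ref{9.24 decay estimates2}$ for $s<2\beta-n$, while the theorem allows $s<\frac{2\beta}{\alpha}$; the paper's one-line proof by analogy inherits the same issue.)

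The one step where your proposal would fail as written is continuous dependence. You propose to close the estimate by absorption: schematically $\left\lVert u_k-u\right\rVert_E\leq C\left\lVert u_0^k-u_0\right\rVert_{L_x^r}+C\,T^{\alpha-\gamma p}M^{p-1}\left\lVert u_k-u\right\rVert_E$, absorbing the last term because the power of $T$ is positive when $\gamma<\frac{\alpha}{p}$. But $M$ is the (not small) size of the given solution and $T$ is an arbitrary time below $T_{\max}$, so $C\,T^{\alpha-\gamma p}M^{p-1}<1$ cannot be arranged; absorption only yields continuity for small $T$. The paper instead keeps the estimate pointwise in time and feeds the resulting integral inequality, with kernel $(t-\tau)^{\alpha-1}\tau^{-\gamma p}$, into the singular Gronwall lemma (Lemma $\ref{10.27Gronwall inequality lemma}$); its hypothesis $\beta+\gamma>1$ reads here $\alpha+(1-\gamma p)>1$, which is precisely where $\gamma<\frac{\alpha}{p}$ enters, and it gives a bound of the form $\left\lVert u_0^k-u_0\right\rVert_{L_x^r}\mathbb{E}_{\alpha,1-\gamma p}(Ct)$ valid on all of $[0,T]$. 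Moreover, before this estimate can even be written one must know that $u_k$ exists up to time $T$ with uniformly controlled norm; the paper secures this with the stopping time $T_k$, the Gronwall bound, and the continuation lemma, a bootstrap your sketch omits. Both fixes are available inside your framework, so the gap is repairable, but as stated the continuity of $u_0\mapsto u$ for arbitrary $T<T_{\max}$ does not follow.
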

\begin{remark}
We cannot get the blow-up criterion by iterative method like what we do in the integer case ($\alpha=1$) for the time fractional equation since the time fractional equation is not invariant under time translation. Motivated by the works\cite{Well-posedness-and-blow-up-results-for-a-class-of-nonlinear-fractional-Rayleigh-Stokes-problem,Well-posedness-results-for-a-class-of-semilinear-time-fractional-diffusion-equations,On-well-posedness-of-semilinear-Rayleigh-Stokes-problem-with-fractional-derivative-on}, we prove the continuation of the solution by contraction mapping theorem and then prove the blow-up criterion by continuation.
\end{remark}
The second part of the main results is the global well-posedness of $(\ref{9.5 space time fractional Schrodinger equation})$ with small initial data.
\begin{theorem}
\label{20240517global solution with small data II}
Let $s\geq0$, $\beta>\frac{n}{2}$, $\frac{1}{1-\alpha}\vee2<p<\infty$, $1\leq r\leq\infty$ and $F$ satisfy the hypothesis $(\ref{hypothesis on nonlinear F})$. There exists $\gamma>0$ such that $(\ref{9.5 space time fractional Schrodinger equation})$ admits a unique solution in the class
\[ u\in C\left([0,\infty);H_x^{s,r}\right),\quad t^\gamma u\in C\left([0,\infty);L_x^\infty\right) \]
with a small initial data $u_0\in L_x^{\frac{n(p-1)}{2\beta}}\cap H_x^{s,r}$ in the sense that
\[ \left\lVert u_0\right\rVert_{L_x^{\frac{n(p-1)}{2\beta}}}+\left\lVert u_0\right\rVert_{H_x^{s,r}}<\delta, \]
where $\delta$ is sufficiently small.
\end{theorem}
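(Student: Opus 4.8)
The plan is to solve $(\ref{9.5 space time fractional Schrodinger equation})$ through its Duhamel representation $(\ref{9.5 The equivalent integral equation equation1})$, i.e. to produce a fixed point of $\Phi(u)=S_tu_0+i\mathcal{M}F(u)$ on the whole half-line. First I would fix the critical decay exponent $\gamma=\frac{\alpha}{p-1}$ and work in the complete metric space
\[ Y=\left\{u:\left\lVert u\right\rVert_Y:=\sup_{t>0}\left\lVert u(t)\right\rVert_{H_x^{s,r}}+\sup_{t>0}t^\gamma\left\lVert u(t)\right\rVert_{L_x^\infty}<\infty\right\}, \]
contracting inside a small closed ball of $Y$ (which is complete under the weaker distance $d(u,v)=\sup_{t>0}t^\gamma\left\lVert u-v\right\rVert_{L_x^\infty}$). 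The point of the choice of $\gamma$ is scale invariance of the weighted $L_x^\infty$ norm: with $q_0=\frac{n(p-1)}{2\beta}$ one has $\sigma n/q_0=\gamma$, so the $L_x^{q_0}$–$L_x^\infty$ decay estimate of $\S\ref{Estimates of the linear operators}$ gives $t^\gamma\left\lVert S_tu_0\right\rVert_{L_x^\infty}\lesssim\left\lVert u_0\right\rVert_{L_x^{q_0}}$ with \emph{no} residual power of $t$; here $\beta>\frac n2$ guarantees the kernel of $S_t$ is integrable and $p>2$ makes $q_0$ an admissible exponent.

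Next I would run the contraction estimates term by term. The uniform boundedness $\left\lVert S_t\phi\right\rVert_{H_x^{s,r}}\lesssim\left\lVert\phi\right\rVert_{H_x^{s,r}}$ controls the linear $H_x^{s,r}$ part. For the nonlinearity I would use $\left\lvert F(u)\right\rvert\lesssim\left\lvert u\right\rvert^p$ together with the $L_x^\infty$-boundedness $\left\lVert P_t\phi\right\rVert_{L_x^\infty}\lesssim t^{\alpha-1}\left\lVert\phi\right\rVert_{L_x^\infty}$ to obtain
\[ \left\lVert\mathcal{M}F(u)(t)\right\rVert_{L_x^\infty}\lesssim\int_0^t(t-\tau)^{\alpha-1}\left\lVert u(\tau)\right\rVert_{L_x^\infty}^p\,d\tau\lesssim\left\lVert u\right\rVert_Y^p\int_0^t(t-\tau)^{\alpha-1}\tau^{-p\gamma}\,d\tau. \]
The last integral equals $t^{\alpha-p\gamma}B(\alpha,1-p\gamma)$; the identity $\gamma=\frac{\alpha}{p-1}$ forces $\alpha-p\gamma=-\gamma$, and the hypothesis $p>\frac{1}{1-\alpha}$ is \emph{exactly} the condition $p\gamma<1$ making $B(\alpha,1-p\gamma)$ finite, whence $t^\gamma\left\lVert\mathcal{M}F(u)(t)\right\rVert_{L_x^\infty}\lesssim\left\lVert u\right\rVert_Y^p$ uniformly on $[0,\infty)$. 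For the $H_x^{s,r}$ norm I would instead use $\left\lVert P_t\phi\right\rVert_{H_x^{s,r}}\lesssim t^{\alpha-1}\left\lVert\phi\right\rVert_{H_x^{s,r}}$ and a fractional chain (Moser) estimate $\left\lVert F(u)\right\rVert_{H_x^{s,r}}\lesssim\left\lVert u\right\rVert_{L_x^\infty}^{p-1}\left\lVert u\right\rVert_{H_x^{s,r}}$; since $(p-1)\gamma=\alpha$, the resulting time integral $\int_0^t(t-\tau)^{\alpha-1}\tau^{-\alpha}\,d\tau=B(\alpha,1-\alpha)$ is finite for $\alpha\in(0,1)$ and gives $\sup_t\left\lVert\mathcal{M}F(u)(t)\right\rVert_{H_x^{s,r}}\lesssim\left\lVert u\right\rVert_Y^p$.

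Combining these yields $\left\lVert\Phi(u)\right\rVert_Y\lesssim\left\lVert u_0\right\rVert_{L_x^{q_0}}+\left\lVert u_0\right\rVert_{H_x^{s,r}}+\left\lVert u\right\rVert_Y^p$, and the difference bound follows from $\left\lvert F(u)-F(v)\right\rvert\lesssim\left(\left\lvert u\right\rvert^{p-1}+\left\lvert v\right\rvert^{p-1}\right)\left\lvert u-v\right\rvert$, which produces the same Beta integral and hence $d(\Phi u,\Phi v)\lesssim\left(\left\lVert u\right\rVert_Y^{p-1}+\left\lVert v\right\rVert_Y^{p-1}\right)d(u,v)$. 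Taking the ball radius comparable to $\delta$ and $\delta$ small enough that $\delta^{p-1}\ll1$, the map $\Phi$ sends the ball into itself and is a contraction, so Banach's fixed point theorem gives a unique global solution in $Y$; the continuity $u\in C([0,\infty);H_x^{s,r})$ and $t^\gamma u\in C([0,\infty);L_x^\infty)$ then follow from the integral equation and dominated convergence. I expect the main obstacle to be closing the nonlinear estimate at the critical, scale-invariant rate over the \emph{entire} half-line rather than on a short interval: one must match the exponents exactly ($\gamma=\frac{\alpha}{p-1}$, $q_0=\frac{n(p-1)}{2\beta}$) and check that the Beta-type time integrals converge at both endpoints $\tau\to0$ and $\tau\to t$ while reproducing precisely the weight $t^{-\gamma}$ with no loss — it is exactly at this point that the lower bound $p>\frac{1}{1-\alpha}$ (integrability at $\tau=0$) and the standing assumption $\beta>\frac n2$ (validity of the decay estimates themselves) are forced.
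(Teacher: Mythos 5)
Your proposal follows essentially the same route as the paper: the same critical choice $\gamma=\frac{\alpha}{p-1}$, the same weighted space $L_t^\infty H_x^{s,r}\cap t^{-\gamma}L_t^\infty L_x^\infty$, the decay estimates of Proposition $\ref{9.24 decay estimates2}$ together with the nonlinear estimates of Lemmas $\ref{10.10 Main results theorem1 proof lemma2}$ and $\ref{10.9Estimates of the nonlinearity Lemma1}$, and a contraction on a small ball (the paper's metric adds an $L_t^\infty L_x^r$ component where you use only the weighted $L_x^\infty$ distance, an inessential variation). In fact you supply the Beta-function computations and the exponent bookkeeping ($\sigma n/q_0=\gamma$, $\alpha-p\gamma=-\gamma$, $p>\frac{1}{1-\alpha}\Leftrightarrow p\gamma<1$) that the paper's very terse proof leaves implicit.
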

\begin{theorem}
\label{20240517global solution with small data III}
Let $0\leq s<\frac{2\beta}{\alpha}-2\beta$, $\beta>\frac{n}{2}$, $\frac{1}{1-\alpha}\vee2<p<\infty$, $1\leq r\leq\infty$ and $F$ satisfy the hypothesis $(\ref{hypothesis on nonlinear F})$. There exists $\gamma>0$ such that $(\ref{9.5 space time fractional Schrodinger equation})$ admits a unique solution in the class
\[ u\in C\left([0,\infty);L_x^r\right),\quad t^\gamma u\in C\left([0,\infty);L_x^\infty\right),\quad t^{\sigma s}u\in C\left([0,\infty);\dot{H}_x^{s,r}\right) \]
with a small initial data $u_0\in L_x^{\frac{n(p-1)}{2\beta}}\cap L_x^r$ in the sense that
\[ \left\lVert u_0\right\rVert_{L_x^{\frac{n(p-1)}{2\beta}}}+\left\lVert u_0\right\rVert_{L_x^r}<\delta, \]
where $\delta$ is sufficiently small.
\end{theorem}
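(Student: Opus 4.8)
The plan is to solve the Duhamel equation $(\ref{9.5 The equivalent integral equation equation1})$, i.e. $u=\Phi(u):=S_tu_0+i\mathcal{M}F(u)$, by the contraction mapping principle in a complete metric space built from the weighted norms appearing in the conclusion. Writing $q_c:=\frac{n(p-1)}{2\beta}$ for the scaling-critical Lebesgue exponent (which is exactly the exponent of the data space) and fixing $\gamma:=\frac{\alpha}{p-1}=\frac{\sigma n}{q_c}$, I would work in the ball
\[ E_\rho=\left\{u:\ \|u\|_E:=\sup_{t>0}\|u(t)\|_{L_x^{q_c}}+\sup_{t>0}\|u(t)\|_{L_x^r}+\sup_{t>0}t^\gamma\|u(t)\|_{L_x^\infty}+\sup_{t>0}t^{\sigma s}\|u(t)\|_{\dot{H}_x^{s,r}}\le\rho\right\}, \]
equipped with the distance measured only in the weaker norms $L_x^{q_c}$, $L_x^r$ and $t^\gamma L_x^\infty$, so that the merely $C^1$ regularity of $F$ suffices for the difference estimates. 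The auxiliary $L_x^{q_c}$ control, though absent from the statement, is the engine that produces global-in-time bounds: it is precisely the norm left invariant by the scaling of $(\ref{9.5 space time fractional Schrodinger equation})$.

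First I would bound the linear flow. Using the $L^q$--$L^r$ decay estimates of $\S\ref{Estimates of the linear operators}$ one has $\|S_tu_0\|_{L_x^{q_c}}\lesssim\|u_0\|_{L_x^{q_c}}$, $\|S_tu_0\|_{L_x^r}\lesssim\|u_0\|_{L_x^r}$, $t^\gamma\|S_tu_0\|_{L_x^\infty}\lesssim t^{\gamma-\sigma n/q_c}\|u_0\|_{L_x^{q_c}}=\|u_0\|_{L_x^{q_c}}$, and, by the H\"older type estimates, $t^{\sigma s}\|S_tu_0\|_{\dot{H}_x^{s,r}}\lesssim\|u_0\|_{L_x^r}$. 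The cancellation of every power of $t$ here is exactly the choice $\gamma=\sigma n/q_c$ together with the critical scaling; this is what makes the bounds uniform on $(0,\infty)$, hence global rather than local. Thus $\|S_\cdot u_0\|_E\lesssim\|u_0\|_{L_x^{q_c}}+\|u_0\|_{L_x^r}<\delta$.

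The core is the nonlinear estimate for $\mathcal{M}F(u)(t)=\int_0^tP_{t-\tau}F(u(\tau))\,d\tau$. Using $|F(u)|\lesssim|u|^p$, the splitting $\|F(u)\|_{L_x^a}\lesssim\|u\|_{L_x^\infty}^{p-1}\|u\|_{L_x^a}$, a fractional chain rule $\||\nabla|^sF(u)\|_{L_x^r}\lesssim\|u\|_{L_x^\infty}^{p-1}\||\nabla|^su\|_{L_x^r}$ valid for the merely $C^1$ nonlinearity, and the decay and H\"older type bounds on $P_{t-\tau}$, each norm reduces to a time integral of the type $\int_0^t(t-\tau)^{\alpha-1-\nu}\tau^{-(p-1)\gamma-\mu}\,d\tau$. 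Since $(p-1)\gamma=\alpha$, the substitution $\tau=t\theta$ turns this into $t^{-\nu-\mu}B(\alpha-\nu,\,1-\alpha-\mu)$, and the outer weight ($t^\gamma$, or $t^{\sigma s}$, or none for $L_x^r$) cancels the surviving power of $t$. Convergence of the Beta integral at its two endpoints is exactly where the hypotheses enter: integrability near $\tau=t$ in the $L_x^\infty$ norm (with $\nu=\gamma$) forces $\gamma<\alpha$, i.e. $p>2$; integrability near $\tau=0$ in the $\dot{H}_x^{s,r}$ norm (with $\mu=\sigma s$) forces $\alpha+\sigma s<1$, i.e. the stated $s<\frac{2\beta}{\alpha}-2\beta$; and $p>\frac{1}{1-\alpha}$, equivalent to $\gamma<1-\alpha$, secures the endpoint integrability needed for the time-continuity (vanishing at $t=0$) of the weighted components. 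Altogether $\|\mathcal{M}F(u)\|_E\lesssim\rho^p$.

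To close, I would take $\rho\sim\delta$ with $\delta$ small: since the nonlinear bound is superlinear ($p>2$), $\Phi$ maps $E_\rho$ into itself, while the difference estimate $|F(u)-F(v)|\lesssim\left(|u|^{p-1}+|v|^{p-1}\right)|u-v|$ together with the same Beta integrals makes $\Phi$ a contraction in the weaker metric, so Banach's theorem yields a unique global solution. Continuity in time ($u\in C([0,\infty);L_x^r)$ with $u(0)=u_0$, and $t^\gamma u$, $t^{\sigma s}u$ continuous with value $0$ at $t=0$) follows from the strong continuity of $S_t$, a density argument in the critical norm, and the vanishing of the Duhamel term as $t\downarrow0$. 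The main obstacle I anticipate is the homogeneous $\dot{H}_x^{s,r}$ estimate: one must simultaneously run the fractional chain rule for a non-smooth $F$, trade regularity for time decay through the H\"older type bound on $P_t$, and keep the competing singularities $(t-\tau)^{\alpha-1}$, $\tau^{-\alpha}$ and $\tau^{-\sigma s}$ integrable; it is precisely this balance that pins down the admissible range of $s$.
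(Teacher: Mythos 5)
Your proposal is correct and is essentially the paper's own argument: the paper proves Theorem $\ref{20240517global solution with small data II}$ by a contraction mapping with the critical weight $\gamma=\frac{\alpha}{p-1}$ in time-weighted $L_t^\infty$ spaces, measuring the contraction only in the weaker $L_x^r$ and $t^\gamma L_x^\infty$ norms, and declares that Theorem $\ref{20240517global solution with small data III}$ follows by the same scheme --- which is exactly what you carry out, correctly locating where $p>2$, $p>\frac{1}{1-\alpha}$ and $s<\frac{2\beta}{\alpha}-2\beta$ enter through the Beta-integral exponents. Your extra tracking of the scaling-critical $L_x^{\frac{n(p-1)}{2\beta}}$ norm of $u(t)$ itself is a harmless embellishment not present in the paper, whose proof only uses that norm of the initial datum via the linear decay estimate.
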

The third part of the main results is the global well-posedness of $(\ref{9.5 space time fractional Schrodinger equation})$ with arbitrary initial data.
\begin{theorem}
\label{20240517global solution arbitrary data}
Let $s\geq0$, $\beta>\frac{n}{2}$, $1\leq r\leq\infty$, $1\leq q\leq\infty$ satisfying $\frac{\sigma n}{q}<\frac{\alpha}{p-1}$, $F$ satisfy the hypotheses $(\ref{hypothesis on nonlinear F})$, $(\ref{10.8 hypothesis on nonlinear F2})$ and $(\ref{10.8 hypothesis on nonlinear F3})$ and
\[ \begin{cases}
2\leq p\leq\frac{2n}{n-\beta},\quad&\beta<n,\\
2\leq p<\infty,\quad&\beta\geq n.
\end{cases} \]

For any $\gamma\in\left(\frac{\sigma n}{q},\frac{\alpha}{p-1}\right)$, $(\ref{9.5 space time fractional Schrodinger equation})$ admits a unique solution in the class
\[ u\in C\left(\left[0,\infty\right);H_x^{s,r}\right),\quad t^\gamma u\in C\left(\left[0,\infty\right);L_x^\infty\right) \]
with $u_0\in H_x^{\frac{\beta}{2}}\cap L_x^q\cap H_x^{s,r}$ satisfying $\int_{\mathbb{R}^n}G(u_0)dx<\infty$.
\end{theorem}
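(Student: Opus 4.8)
\emph{Proof proposal.} The plan is to combine the local theory of Theorem~\ref{20240515locla well-posendess} with the a priori bounds \eqref{11.11 nonconservation law} to exclude finite-time blow-up. The hypotheses here are precisely those of Theorem~\ref{20240515locla well-posendess}: indeed $s\ge 0$, $\beta>\tfrac n2$, $1\le r\le\infty$, $1\le q\le\infty$ with $\tfrac{\sigma n}{q}<\tfrac{\alpha}{p-1}$, $\gamma\in(\tfrac{\sigma n}{q},\tfrac{\alpha}{p-1})$, and the dichotomy on $\beta$ forces $2\le p<\infty$. That theorem therefore furnishes a unique maximal solution $u\in C([0,T_{\max});H_x^{s,r})$ with $t^\gamma u\in C([0,T_{\max});L_x^\infty)$ obeying the blow-up alternative: if $T_{\max}<\infty$, then $\|u(t)\|_{H_x^{s,r}}\to\infty$ or $t^\gamma\|u(t)\|_{L_x^\infty}\to\infty$ as $t\uparrow T_{\max}$. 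Applying the same theorem with $(s,r)$ replaced by $(\tfrac{\beta}{2},2)$ and invoking uniqueness (using $u_0\in H_x^{\beta/2}\cap L_x^q$), the solution also lies in $C([0,T_{\max});H_x^{\beta/2})$, so that the mass and energy are well defined along the flow. It then suffices to show that both quantities in the alternative remain finite up to any finite $T_{\max}$.

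The first step is the uniform energy bound. Since $F$ satisfies \eqref{10.8 hypothesis on nonlinear F2} and \eqref{10.8 hypothesis on nonlinear F3}, Lemmas~\ref{10.8A priori estimates1} and~\ref{10.8A priori estimates2} give \eqref{11.11 nonconservation law}, i.e. $\|u(t)\|_{L_x^2}\lesssim\|u_0\|_{L_x^2}$ and $I(u(t))\lesssim I(u_0)$; because $G\ge 0$ and $u_0\in H_x^{\beta/2}$ with $\int_{\mathbb{R}^n}G(u_0)\,dx<\infty$, the right-hand sides are finite and independent of $t$. Discarding the nonnegative potential term controls $\|(-\Delta)^{\beta/2}u(t)\|_{L_x^2}$, and combined with the mass bound this yields $M:=\sup_{t\in[0,T_{\max})}\|u(t)\|_{H_x^{\beta/2}}<\infty$. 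The subcriticality assumption on $p$ now enters: the Sobolev embedding $H_x^{\beta/2}\hookrightarrow L_x^{2n/(n-\beta)}$ when $\beta<n$ (and into every $L_x^\rho$, $\rho<\infty$, when $\beta\ge n$) upgrades $M$ to a uniform-in-time bound $\|u(t)\|_{L_x^{2n/(n-\beta)}}\le M'$, while $p\le\tfrac{2n}{n-\beta}$ guarantees $\tfrac{2n}{(n-\beta)p}\ge 1$, so that $\|F(u(t))\|_{L_x^{2n/((n-\beta)p)}}\lesssim\|u(t)\|_{L_x^{2n/(n-\beta)}}^{p}\le (M')^{p}$, uniformly in $t$.

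The remaining step is a bootstrap on the integral equation \eqref{9.5 The equivalent integral equation equation1}. For the weighted $L_x^\infty$ norm I would write $u=S_tu_0+i\mathcal{M}F(u)$, estimate $\|S_tu_0\|_{L_x^\infty}\lesssim t^{-\sigma n/q}\|u_0\|_{L_x^q}$ by the $L_x^q$--$L_x^\infty$ decay estimate (so $t^\gamma\|S_tu_0\|_{L_x^\infty}$ is finite thanks to $\gamma>\tfrac{\sigma n}{q}$), and estimate the Duhamel term by the $L_x^{2n/((n-\beta)p)}$--$L_x^\infty$ decay estimate for $P_{t-\tau}$ fed with the uniformly bounded nonlinearity; the kernel exponent is integrable because $\beta>\tfrac n2$ forces the relevant decay rate below $\alpha$, and the integral contributes only a power of $t$. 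Hence $t^\gamma\|u(t)\|_{L_x^\infty}$ stays finite up to $T_{\max}$. For the $H_x^{s,r}$ norm I would use boundedness of $S_t$ on $H_x^{s,r}$ from the linear estimates, the fractional chain/product rule $\|F(u)\|_{H_x^{s,r}}\lesssim\|u\|_{L_x^\infty}^{p-1}\|u\|_{H_x^{s,r}}$, and the just-obtained bound $\|u(\tau)\|_{L_x^\infty}\lesssim\tau^{-\gamma}$; since $\gamma(p-1)<\alpha<1$, the singular memory integral $\int_0^t(t-\tau)^{\alpha-1}\tau^{-\gamma(p-1)}\|u(\tau)\|_{H_x^{s,r}}\,d\tau$ is governed by a fractional (Henry--Gronwall type) inequality, giving a Mittag-Leffler-type bound finite on every finite interval. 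Both quantities in the alternative thus stay finite up to any finite $T_{\max}$, forcing $T_{\max}=\infty$; uniqueness and membership in the stated class follow by patching the local theory.

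The principal obstacle is this final bootstrap. Unlike the classical case $\alpha=1$, the equation is not invariant under time translation, so one cannot restart the local theory at an interior time and iterate; the solution must instead be controlled directly on $[0,T]$ through the memory operator $\mathcal{M}$. The crux is to upgrade the \emph{fixed-regularity} energy bound (living only in $H_x^{\beta/2}$) to the $L_x^\infty$ and $H_x^{s,r}$ bounds demanded by the blow-up criterion, absorbing the singular-in-time kernels of $S_t$ and $P_t$ by means of the decay and H\"older estimates of $\S\ref{Estimates of the linear operators}$ together with a fractional Gronwall argument. The subcritical window $2\le p\le\tfrac{2n}{n-\beta}$ is exactly what makes the energy norm strong enough to close this loop.
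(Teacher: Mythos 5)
Your proposal is correct and follows essentially the same route as the paper's proof: reduce global existence to boundedness of $\left\lVert u(t)\right\rVert_{H_x^{s,r}}$ and $t^\gamma\left\lVert u(t)\right\rVert_{L_x^\infty}$ on finite intervals via the blow-up alternative of Theorem~\ref{20240515locla well-posendess}, control the nonlinearity uniformly in time through Lemmas~\ref{10.8A priori estimates1}--\ref{10.8A priori estimates2} and Sobolev embedding (this is where $p\leq\frac{2n}{n-\beta}$ enters), bound the weighted $L_x^\infty$ norm by the decay estimates on the Duhamel formula, and close the $H_x^{s,r}$ bound with Lemma~\ref{10.9Estimates of the nonlinearity Lemma1} and the singular Gronwall inequality of Lemma~\ref{10.27Gronwall inequality lemma}. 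The only cosmetic deviation is that you feed $F(u)$ into the $P_{t-\tau}$ decay estimate through $L_x^{2n/((n-\beta)p)}$ rather than through $L_x^1$ via $\left\lVert u\right\rVert_{L_x^p}^p$ as the paper does, which changes nothing of substance.
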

\begin{theorem}
\label{20240519global solution arbitrary dataII}
Let $0\leq s<2\beta$, $\beta>\frac{n}{2}$, $2\leq p<\infty$, $1\leq r\leq\infty$ satisfying $\frac{\sigma n}{r}<\left(\frac{\alpha-\sigma s}{p-1}\wedge\frac{1}{p}\right)$, $F$ satisfy the hypotheses $(\ref{hypothesis on nonlinear F})$, $(\ref{10.8 hypothesis on nonlinear F2})$ and $(\ref{10.8 hypothesis on nonlinear F3})$ and
\[ \begin{cases}
2\leq p\leq\frac{2n}{n-\beta},\quad&\beta<n,\\
2\leq p<\infty,\quad&\beta\geq n.
\end{cases} \]

For any $\gamma\in\left(\frac{\sigma n}{r},\frac{\alpha-\sigma s}{p-1}\wedge\frac{1}{p}\right)$, $(\ref{9.5 space time fractional Schrodinger equation})$ admits a unique solution in the class
\[ t^{\sigma s}u\in C\left([0,\infty);H_x^{s,r}\right),\quad t^\gamma u\in C\left([0,\infty);L_x^\infty\right) \]
with $u_0\in L_x^r\cap H_x^{\frac{\beta}{2}}$ satisfying $\int_{\mathbb{R}^n}G(u_0)dx<\infty$.
\end{theorem}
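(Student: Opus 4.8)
The plan is to promote the maximal local solution from Theorem~\ref{20240518locla well-posendessII} to a global one by feeding the a~priori mass and energy bounds $(\ref{11.11 nonconservation law})$ into a continuation argument. First I would verify that the window for $\gamma$ demanded here sits inside the one required by the local theory: since $\alpha\le1$ and $\sigma s\ge0$ one has $\frac{\alpha-\sigma s}{p-1}\wedge\frac1p\le\frac{\alpha}{p-1}\wedge\frac{1-\sigma s}{p-1}\wedge\frac1p$, and $s<2\beta\le\frac{2\beta}{\alpha}$, so every admissible $\gamma$ and the given $s$ are admissible for Theorem~\ref{20240518locla well-posendessII}. This yields a unique maximal solution with $t^{\sigma s}u\in C([0,T_{\max});H_x^{s,r})$ and $t^{\gamma}u\in C([0,T_{\max});L_x^\infty)$ together with the blow-up alternative, so it suffices to rule out $T_{\max}<\infty$.

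The second step activates the energy machinery. Because $F$ now obeys the additional hypotheses $(\ref{10.8 hypothesis on nonlinear F2})$ and $(\ref{10.8 hypothesis on nonlinear F3})$ and the data satisfies $u_0\in H_x^{\beta/2}$ with $\int_{\mathbb{R}^n}G(u_0)\,dx<\infty$, Lemmas~\ref{10.8A priori estimates1} and \ref{10.8A priori estimates2} supply $(\ref{11.11 nonconservation law})$: bounds on $\|u(t)\|_{L_x^2}$ and on $I(u(t))$ that are uniform on $[0,T_{\max})$ and depend only on the data. Since $H_x^{\beta/2}\hookrightarrow L_x^2$ the data lies in $L_x^2$ automatically, and $I$ controls the homogeneous part $\|u(t)\|_{\dot H_x^{\beta/2}}$, so together they give $\sup_{0\le t<T_{\max}}\|u(t)\|_{H_x^{\beta/2}}\le C_0<\infty$. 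Here one must know the solution genuinely lies in the energy space so that $I(u(t))$ is finite; this persistence of $H_x^{\beta/2}$ regularity follows because $u_0\in H_x^{\beta/2}$, after identifying the energy-space solution with the one from Theorem~\ref{20240518locla well-posendessII} by uniqueness.

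Third, I would convert the energy bound into uniform control of the Lebesgue norm driving the nonlinear estimate. As $\beta>\frac n2$ and $2\le p\le\frac{2n}{n-\beta}$ (resp. $2\le p<\infty$ when $\beta\ge n$), the Sobolev embedding $H_x^{\beta/2}\hookrightarrow L_x^{p}$ holds --- this is precisely where the subcriticality restriction on $p$ is used --- whence $\sup_{0\le t<T_{\max}}\|u(t)\|_{L_x^{p}}\le C_1<\infty$. Writing $u=S_tu_0+i\mathcal{M}F(u)$ as in $(\ref{9.5 The equivalent integral equation equation1})$ and using $|F(z)|\lesssim|z|^{p}$ (a consequence of $(\ref{hypothesis on nonlinear F})$ and $F(0)=0$), the nonlinearity factors as $|F(u)|\lesssim|u|^{p-1}|u|$, so that in the relevant norm $\|F(u)\|\lesssim\|u\|_{L_x^{p}}^{p-1}\|u\|_{L_x^\infty}$; the a~priori bound absorbs the critical power $\|u\|_{L_x^{p}}^{p-1}$ and leaves a quantity linear in $\|u\|_{L_x^\infty}$. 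Combining this with the $L^q$--$L^r$ decay estimates for $S_t,P_t$ and a generalized Gronwall inequality bounds $t^{\gamma}\|u(t)\|_{L_x^\infty}$ uniformly on $[0,T_{\max})$, after which the H\"older-type estimates propagate the weighted norm $t^{\sigma s}\|u(t)\|_{H_x^{s,r}}$; both conclusions contradict the blow-up alternative unless $T_{\max}=\infty$.

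The hard part will be the continuation step, which is genuinely more subtle than in the integer-order case. Because the Duhamel operator $\mathcal{M}$ is nonlocal in time, extending the solution beyond a time $T<T_{\max}$ forces one to control the memory of the entire history $[0,T]$ --- supplied by the a~priori bound --- while simultaneously closing a contraction for the increment on $[T,T+\delta]$; the lack of time-translation invariance (the reason, noted in the Remark, that the blow-up criterion must itself be established by continuation rather than iteration) prevents simply restarting the flow at $T$. The crux is therefore to show that the extension length $\delta$ admits a lower bound depending only on the a~priori-controlled norms, so that finitely many steps reach any prescribed horizon and $T_{\max}=\infty$ follows.
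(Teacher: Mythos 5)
Your overall strategy coincides with the paper's: invoke the local theory of Theorem~\ref{20240518locla well-posendessII} (your parameter check that $\frac{\alpha-\sigma s}{p-1}\wedge\frac{1}{p}\le\frac{\alpha}{p-1}\wedge\frac{1-\sigma s}{p-1}\wedge\frac{1}{p}$ and $s<2\beta<\frac{2\beta}{\alpha}$ is correct, and more explicit than the paper), then use Lemmas~\ref{10.8A priori estimates1} and \ref{10.8A priori estimates2} together with the embedding $H_x^{\beta/2}\hookrightarrow L_x^p$ (valid precisely because $p\le\frac{2n}{n-\beta}$ when $\beta<n$) to bound the norms appearing in the blow-up alternative on every finite interval. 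However, your execution of the $L_x^\infty$ bound has a genuine gap. You factor $\left\lvert F(u)\right\rvert\lesssim\left\lvert u\right\rvert^{p-1}\left\lvert u\right\rvert$, keep the last factor in $L_x^\infty$, and close with a singular Gronwall inequality. Concretely this means estimating $\left\lVert F(u(\tau))\right\rVert_{L_x^{p/(p-1)}}\lesssim\left\lVert u(\tau)\right\rVert_{L_x^p}^{p-1}\left\lVert u(\tau)\right\rVert_{L_x^\infty}$ and using the $L_x^{p/(p-1)}\to L_x^\infty$ decay of $P_{t-\tau}$, which produces the kernel $(t-\tau)^{\alpha-1-\sigma n\frac{p-1}{p}}\tau^{-\gamma}$ acting on $\tau^\gamma\left\lVert u(\tau)\right\rVert_{L_x^\infty}$. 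Lemma~\ref{10.27Gronwall inequality lemma} applies only when the two exponents sum correctly, i.e. $\left(\alpha-\sigma n\tfrac{p-1}{p}\right)+(1-\gamma)>1$, that is $\gamma<\alpha-\sigma n\frac{p-1}{p}$, and this is \emph{not} implied by the hypothesis $\gamma<\frac{\alpha-\sigma s}{p-1}\wedge\frac{1}{p}$. For instance, take $n=1$, $\beta=0.6$, $\alpha=0.4$ (so $\sigma n=\frac{1}{3}$), $p=2$, $s=0$, $r=1$: the theorem admits $\gamma\in\left(\frac{1}{3},0.4\right)$, while your Gronwall step demands $\gamma<0.4-\frac{1}{6}<\frac{1}{3}$, so it fails for every admissible $\gamma$; redistributing the Lebesgue exponents while retaining an $L_x^\infty$ factor does not repair this for $r=1$.

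The paper avoids the issue entirely: in its Step 1 the a priori bound absorbs \emph{all} $p$ powers, $\left\lVert F(u(\tau))\right\rVert_{L_x^1}\lesssim\left\lVert u(\tau)\right\rVert_{L_x^p}^p\lesssim E(u_0)^{\frac{p}{2}}$, and the $L_x^1\to L_x^\infty$ decay gives the kernel $(t-\tau)^{\alpha-\sigma n-1}$, which is integrable because $\sigma n<\alpha$ (equivalent to $\beta>\frac{n}{2}$); hence $t^\gamma\left\lVert u(t)\right\rVert_{L_x^\infty}$ is bounded on $[0,T]$ with no Gronwall argument at all. The Gronwall lemma is reserved for the second step, bounding $t^{\sigma s}\left\lVert u(t)\right\rVert_{H_x^{s,r}}$, where the kernel is $(t-\tau)^{\alpha-1}\tau^{-\gamma(p-1)-\sigma s}$ and the required condition $\gamma(p-1)+\sigma s<\alpha$ is exactly the hypothesis $\gamma<\frac{\alpha-\sigma s}{p-1}$. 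You should replace your factorization by this direct $L^1$ estimate. Finally, your closing concern about a uniform lower bound on the extension step is unnecessary: the continuation machinery (Lemma~\ref{20240516continuation lemma}) is already packaged into the blow-up alternative of Theorem~\ref{20240518locla well-posendessII}, so boundedness of both norms on $[0,T_{\max})$ with $T_{\max}<\infty$ is by itself a contradiction, with no step-size bookkeeping required.
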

\section{Some elementary estimates}
\label{11.11Preliminaries of the Besov space and some elementary estimates}
\subsection{Estimates of the evolution operators}
\label{Estimates of the linear operators}
Using the asymptotic expansion of the Mittag-Leffler function, we can decompose the evolution operators into
\begin{equation}
\label{9.23MLinequalities hold}
\begin{aligned}
&S_t\phi=S_t\chi_t(D)\phi-\frac{i}{\Gamma(1-\alpha)}t^{-\alpha}\left\lvert\nabla\right\rvert^{-2\beta}\chi_t^c(D)\phi+t^{-2\alpha}O\left(\left\lvert\nabla\right\rvert^{-4\beta}\right)\chi_t^c(D)\phi,\\
&P_t\phi=P_t\chi_t(D)\phi+\frac{1}{\Gamma(-\alpha)}t^{-\alpha-1}\left\lvert\nabla\right\rvert^{-4\beta}\chi_t^c(D)\phi+t^{-2\alpha-1}O\left(\left\lvert\nabla\right\rvert^{-6\beta}\right)\chi_t^c(D)\phi.
\end{aligned}
\end{equation}

The detailed derivation and the definition of the operators will be left to Appendix $\ref{20240502The derivation}$.

Using Lemmas $\ref{20240505Decay estimates St first}$-$\ref{20240505Holder estimates St third}$, we obtain the following two propositions (Propositions $\ref{9.24 decay estimates2}$ and $\ref{10.10 Holder type estimates}$).
\begin{proposition}[Decay estimates]
\label{9.24 decay estimates2}
Let $0\leq\theta<2\beta-n$ for $\beta>\frac{n}{2}$ and $1\leq q\leq r\leq\infty$. Then
\begin{align*}
&\left\lVert\left\lvert\nabla\right\rvert^\theta S_t\phi\right\rVert_{L_x^r}\lesssim t^{-\sigma\theta-\sigma n\left(\frac{1}{q}-\frac{1}{r}\right)}\left\lVert\phi\right\rVert_{L_x^q},\\
&\left\lVert\left\lvert\nabla\right\rvert^\theta P_t\phi\right\rVert_{L_x^r}\lesssim t^{-\sigma\theta-\sigma n\left(\frac{1}{q}-\frac{1}{r}\right)+\alpha-1}\left\lVert\phi\right\rVert_{L_x^q}.
\end{align*}
\end{proposition}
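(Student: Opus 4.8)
The plan is to exploit the exact self-similarity of the symbols and reduce both estimates to a single kernel bound at $t=1$. Writing $\lambda=t^\sigma$ with $\sigma=\frac{\alpha}{2\beta}$, the identity $|t^\sigma\xi|^{2\beta}=t^\alpha|\xi|^{2\beta}$ gives $a_t(\xi)=A(\lambda\xi)$ and $b_t(\xi)=t^{\alpha-1}B(\lambda\xi)$, where $A(\xi)=E_{\alpha,1}(-i|\xi|^{2\beta})$ and $B(\xi)=E_{\alpha,\alpha}(-i|\xi|^{2\beta})$ are the frozen ($t=1$) multipliers. Consequently $|\nabla|^\theta S_t$ is convolution with the kernel $K_t^\theta(x)=t^{-\sigma(\theta+n)}K^\theta(t^{-\sigma}x)$, where $K^\theta=\mathscr{F}^{-1}(|\xi|^\theta A)$, and $|\nabla|^\theta P_t$ with the same profile carrying an extra factor $t^{\alpha-1}$ and $\widetilde K^\theta=\mathscr{F}^{-1}(|\xi|^\theta B)$. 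First I would record this scaling and invoke Young's convolution inequality with $\frac{1}{m}=1-(\frac1q-\frac1r)$; since $\|K_t^\theta\|_{L_x^m}=t^{-\sigma\theta-\sigma n(\frac1q-\frac1r)}\|K^\theta\|_{L_x^m}$, the precise $t$-powers in the statement (and the additional $t^{\alpha-1}$ for $P_t$) fall out automatically. The whole proposition is thereby reduced to the single claim $K^\theta,\widetilde K^\theta\in L_x^m$ for every $m\in[1,\infty]$, and by interpolation it suffices to treat $m=1$ and $m=\infty$.

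The $L_x^\infty$ bound ($m=\infty$, the dispersive endpoint $q=1$, $r=\infty$) reduces to $|\xi|^\theta A\in L_x^1$, and here the hypothesis $0\le\theta<2\beta-n$ is exactly what is needed. Near $\xi=0$ one has $A(\xi)=1+O(|\xi|^{2\beta})$, so $|\xi|^\theta A$ is locally integrable once $\theta\ge0$; for $|\xi|\to\infty$ I would use the asymptotic expansion of the Mittag-Leffler function in the sector $\frac{\alpha\pi}{2}<|\arg z|\le\pi$ — which applies because $\arg(-i|\xi|^{2\beta})=-\frac\pi2$ lies in this sector precisely since $\alpha<1$ — to get $A(\xi)=-\frac{i}{\Gamma(1-\alpha)}|\xi|^{-2\beta}+O(|\xi|^{-4\beta})$, whence $|\xi|^\theta A$ decays like $|\xi|^{\theta-2\beta}$, integrable at infinity iff $\theta<2\beta-n$. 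This is precisely the information packaged in the decomposition $(\ref{9.23MLinequalities hold})$ and proved term by term in Lemmas $\ref{20240505Decay estimates St first}$--$\ref{20240505Holder estimates St third}$, which I would quote for the high-frequency remainder $O(|\nabla|^{-4\beta})$.

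The main obstacle is the $L_x^1$ bound of the kernel ($m=1$, the energy endpoint $q=r$), which demands genuine spatial decay of $K^\theta$ rather than mere size. The plan is to split $K^\theta$ according to the scale-invariant localization $\chi_t(D)+\chi_t^c(D)$ of $(\ref{9.23MLinequalities hold})$. On the high-frequency region the symbol is $C^\infty$ with all derivatives decaying, so repeated integration by parts yields rapid decay of the corresponding kernel and hence $L_x^1$ control. The delicate part is the contribution of the origin: the leading non-smooth behaviour of $|\xi|^\theta A$ there is homogeneous of degree $\theta$, whose inverse transform decays like $|x|^{-n-\theta}$, integrable at infinity for $\theta>0$; at the borderline $\theta=0$ integrability is recovered from the next non-smooth term $|\xi|^{2\beta}$, which produces $|x|^{-n-2\beta}$ decay. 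I expect balancing these two regimes — in particular controlling the $|\xi|^{2\beta}$-type singularity in the derivatives of $A$ at the origin when $2\beta\notin2\mathbb{N}$ — to be the technically heaviest step.

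For $P_t$ the argument is identical with $A$ replaced by $B$: in the same sector the $k=1$ term of the expansion vanishes (since $\Gamma(\alpha-\alpha)=\Gamma(0)=\infty$), so $B(\xi)=\frac{1}{\Gamma(-\alpha)}|\xi|^{-4\beta}+\cdots$ decays even faster and the condition $\theta<2\beta-n$ is comfortably satisfied, while the prefactor $t^{\alpha-1}$ supplies the shift $+\alpha-1$ in the exponent. Assembling the three terms of $(\ref{9.23MLinequalities hold})$ via Young's inequality together with the two endpoint kernel bounds, and then interpolating in $m$, completes the proof of both displayed estimates.
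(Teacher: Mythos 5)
Your proposal shares its skeleton with the paper's proof: the scaling identity $a_t(\xi)=A(t^\sigma\xi)$, $b_t(\xi)=t^{\alpha-1}B(t^\sigma\xi)$, Young's inequality with $1+\frac{1}{r}=\frac{1}{m}+\frac{1}{q}$, and the Mittag-Leffler asymptotics (including the correct observation that the $k=1$ term vanishes for $E_{\alpha,\alpha}$, which is why the $P_t$ symbol gains $|\xi|^{-4\beta}$ decay) are exactly the ingredients used there. The difference is in how the kernel bounds are produced. The paper never estimates the full frozen kernel $\mathscr{F}^{-1}\left(|\xi|^\theta A\right)$ in one piece: it decomposes the operator as in $(\ref{9.23MLinequalities hold})$ into a low-frequency part plus two explicit algebraic high-frequency parts, and proves for each rescaled kernel a single pointwise bound $(1+|x|)^{-n-1}$ (Lemmas $\ref{20240514KaKb estimate}$ and $\ref{20240514Kdelta estimate}$, via the Fourier transform of surface measure and Bessel-function asymptotics); that one bound yields every $L^m$ norm simultaneously, and Proposition $\ref{9.24 decay estimates2}$ then follows through Lemmas $\ref{20240505Decay estimates St first}$--$\ref{20240505Decay estimates St third}$. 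You instead keep the symbol whole, reduce to the two endpoints $m=\infty$ (integrability of the symbol, which is where $\theta<2\beta-n$ enters, exactly as in the paper) and $m=1$ (spatial decay of the kernel), and interpolate; for the decay you propose integration by parts at high frequency and a homogeneous-expansion analysis of $|\xi|^\theta A=\sum_{k\ge0}c_k|\xi|^{\theta+2\beta k}$ near the origin. This is a viable alternative to the Bessel-function argument, and your claimed decay rates $|x|^{-n-\theta}$ (for $\theta>0$) and $|x|^{-n-2\beta}$ (for $\theta=0$) are the right ones.

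The weak point is that the $m=1$ estimate, which you yourself flag as the technically heaviest step, carries the entire content of the proposition and is left as an expectation rather than a proof. To close it you would need (i) the differentiated high-frequency asymptotics $\left|\partial^\gamma\left(|\xi|^\theta A(\xi)\right)\right|\lesssim|\xi|^{\theta-2\beta-|\gamma|}$, so that $n+1$ integrations by parts give an integrable tail, and (ii) near the origin, a truncation of the expansion into finitely many homogeneous terms $|\xi|^{\theta+2\beta k}$ (each contributing $|x|^{-n-\theta-2\beta k}$, with rapid decay instead when $\theta+2\beta k\in2\mathbb{N}$) plus a remainder smooth enough to integrate by parts $n+1$ times. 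None of this would fail, but it is genuine work, and it is precisely the labor the paper performs once and for all in the pointwise estimates of Lemmas $\ref{20240514KaKb estimate}$ and $\ref{20240514Kdelta estimate}$. So: correct reduction, correct endpoints, a genuinely different and workable route to the kernel decay, but the decisive estimate is sketched, not established.
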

\begin{proposition}[H\"older type estimates]
\label{10.10 Holder type estimates}
Let $0\leq\theta<2\beta-n$ for $\beta>\frac{n}{2}$ and $1\leq q\leq r\leq\infty$. For any $t_1,t_2>0$, we have
\begin{align*}
&\left\lVert\left\lvert\nabla\right\rvert^\theta\left(S_{t_1}-S_{t_2}\right)\phi\right\rVert_{L_x^r}\lesssim\left(t_1\wedge t_2\right)^{-1}\left\lvert t_1^{1-\sigma\theta-\sigma n\left(\frac{1}{q}-\frac{1}{r}\right)}-t_2^{1-\sigma\theta-\sigma n\left(\frac{1}{q}-\frac{1}{r}\right)}\right\rvert\left\lVert\phi\right\rVert_{L_x^q},\\
&\left\lVert\left\lvert\nabla\right\rvert^\theta\left(P_{t_1}-P_{t_2}\right)\phi\right\rVert_{L_x^r}\lesssim\left\lvert t_1^{\alpha-1-\sigma\theta-\sigma n\left(\frac{1}{q}-\frac{1}{r}\right)}-t_2^{\alpha-1-\sigma\theta-\sigma n\left(\frac{1}{q}-\frac{1}{r}\right)}\right\rvert\left\lVert\phi\right\rVert_{L_x^q}.
\end{align*}
\end{proposition}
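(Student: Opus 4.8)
The plan is to obtain both estimates by the same machinery that underlies Proposition~\ref{9.24 decay estimates2} -- the decomposition $(\ref{9.23MLinequalities hold})$ together with a scaling argument -- but applied now to the time derivatives of the evolution operators, followed by an integration in $t$. Set $\mu:=\sigma\theta+\sigma n\left(\frac1q-\frac1r\right)$; the hypotheses $0\le\theta<2\beta-n$ and $1\le q\le r\le\infty$ give $0\le\mu<\alpha<1$, a fact that will be decisive at the end. For $\phi$ in the Schwartz class and $t_1<t_2$, the fundamental theorem of calculus gives $S_{t_2}-S_{t_1}=\int_{t_1}^{t_2}\partial_tS_t\,dt$ and likewise for $P_t$, so it suffices to prove the operator-norm bounds
\begin{equation*}
\left\lVert|\nabla|^\theta\partial_tS_t\right\rVert_{L_x^q\to L_x^r}\lesssim t^{-1-\mu},\qquad
\left\lVert|\nabla|^\theta\partial_tP_t\right\rVert_{L_x^q\to L_x^r}\lesssim t^{\alpha-2-\mu},
\end{equation*}
and then integrate, the general estimate following by density.

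The main obstacle is that these derivative bounds cannot be read off directly. The Mittag-Leffler identity $\partial_tE_{\alpha,1}(-\lambda t^\alpha)=-\lambda t^{\alpha-1}E_{\alpha,\alpha}(-\lambda t^\alpha)$ yields $\partial_tS_t=-i|\nabla|^{2\beta}P_t$, which spends $2\beta$ derivatives and so falls outside the admissible range $\theta<2\beta-n$ of Proposition~\ref{9.24 decay estimates2}; the same loss afflicts $\partial_tP_t$. This is precisely what the decomposition $(\ref{9.23MLinequalities hold})$ repairs: I would differentiate $(\ref{9.23MLinequalities hold})$ term by term, letting $\partial_t$ fall on the explicit powers of $t$, on the smoothing multipliers $|\nabla|^{-2\beta}\chi_t^c(D),|\nabla|^{-4\beta}\chi_t^c(D),\dots$, or on the frequency cutoff $\chi_t$. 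Because $a_t(\xi)$ and $b_t(\xi)$ depend on $(t,\xi)$ only through $t^\sigma|\xi|$, every resulting piece is a Fourier multiplier of the model form $c\,t^{b}|\nabla|^{a}\psi(t^\sigma|\nabla|)$ with a fixed profile $\psi$. The substitution $\xi\mapsto t^{-\sigma}\eta$ turns the kernel into $c\,t^{b-\sigma a-\sigma n}\Psi(t^{-\sigma}\,\cdot\,)$ with $\Psi=\mathscr{F}^{-1}\left(|\eta|^a\psi(|\eta|)\right)$, and Young's inequality then gives the operator norm $t^{b-\sigma a-\sigma n\left(\frac1q-\frac1r\right)}$, provided $\Psi\in L_x^m$ for $\frac1m=1-\left(\frac1q-\frac1r\right)$. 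The hypothesis $\theta<2\beta-n$ is exactly the condition $\theta-2\beta<-n$ that makes the worst high-frequency symbol $|\eta|^{\theta-2\beta}\chi^c(|\eta|)$ integrable, hence $\Psi$ rapidly decaying and in every $L_x^m$. Inserting the exponents, every term produced from the $S_t$ side has operator norm $\lesssim t^{-1-\mu}$ and every term from the $P_t$ side has operator norm $\lesssim t^{\alpha-2-\mu}$; in particular the derivative landing on $\chi_t$ (supported on $|\xi|\sim t^{-\sigma}$ and carrying an extra factor $t^{-1}$) and the remainder terms $t^{-2\alpha}O(|\nabla|^{-4\beta})\chi_t^c(D)$, $t^{-2\alpha-1}O(|\nabla|^{-6\beta})\chi_t^c(D)$ must be checked to produce the same powers.

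Finally I would integrate, and here the two stated forms diverge. On the $P_t$ side the exponent $\alpha-2-\mu$ lies in $(-2,-1)$ and $\alpha-1-\mu\in(-1,0)$ never vanishes, so $\int_{t_1}^{t_2}t^{\alpha-2-\mu}\,dt\sim\left|t_1^{\alpha-1-\mu}-t_2^{\alpha-1-\mu}\right|$ directly, giving the clean difference with no prefactor. On the $S_t$ side the exponent $-1-\mu$ can equal $-1$ (when $\mu=0$, e.g.\ $\theta=0$, $q=r$), so integrating $t^{-1-\mu}$ as a power would be illegitimate; instead I would bound $t^{-1}\le(t_1\wedge t_2)^{-1}$ over $[t_1,t_2]$ and integrate only the factor $t^{-\mu}$, obtaining $(t_1\wedge t_2)^{-1}\int_{t_1}^{t_2}t^{-\mu}\,dt\sim(t_1\wedge t_2)^{-1}\left|t_1^{1-\mu}-t_2^{1-\mu}\right|$, which is legitimate precisely because $\mu<1$ and reproduces the asymmetric form in the statement.

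The step I expect to be delicate is the term-by-term differentiation of $(\ref{9.23MLinequalities hold})$ together with the accompanying scaling estimates -- in particular verifying that all three sources of the $t$-derivative (the explicit power, the cutoff $\chi_t$, and the smoothing multiplier) yield a common exponent, and that the $O(\cdot)$ remainders are negligible under $|\nabla|^\theta$ in the range $\theta<2\beta-n$. These computations are routine once organized and are presumably packaged in the auxiliary Lemmas~\ref{20240505Decay estimates St first}--\ref{20240505Holder estimates St third}; the remaining content is the exponent bookkeeping and the two integration schemes above.
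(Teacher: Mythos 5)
Your proposal is correct and follows essentially the same route as the paper: the paper's Lemmas \ref{20240505Holder estimates St first}--\ref{20240505Holder estimates St third} implement exactly the differentiate-then-integrate scheme you describe, writing each piece of the decomposition $(\ref{9.23MLinequalities hold})$ as $\int_{t_2}^{t_1}\partial_\tau(\cdot)\,d\tau$ with the derivative falling either on the Mittag-Leffler symbol (via $\frac{d}{dt}a_t(\xi)=-i\left\lvert\xi\right\rvert^{2\beta}b_t(\xi)$, which the cutoff keeps inside the admissible kernel bounds), on the explicit powers of $t$, or on the cutoff (producing the $\chi_1'$ term), followed by scaling, the pointwise kernel estimates, and Young's inequality. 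Even your integration bookkeeping --- pulling out $\tau^{-1}\leq\left(t_1\wedge t_2\right)^{-1}$ before integrating $\tau^{-\mu}$ on the $S_t$ side, while integrating $\tau^{\alpha-2-\mu}$ directly on the $P_t$ side --- is precisely the device the paper uses to produce the asymmetric prefactor in the two stated estimates.
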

\begin{remark}
One can easily see that the above propositions (Propositions $\ref{9.24 decay estimates2}$ and $\ref{10.10 Holder type estimates}$) is valid if we replace the norms of $L_x^r$ and $L_x^q$ by $H_x^{s,r}$ and $H_x^{s,q}$ or $B_{r,l}^s$ and $B_{q,l}^s$ respectively where $s\in\mathbb{R}$ and $1\leq l\leq\infty$.
\end{remark}
\subsection{Estimates of the nonlinearity}
\label{Estimates of the nonlinearity}
Using H\"older's inequality, we can easily obtain the following lemma.
\begin{lemma}
\label{10.10 Main results theorem1 proof lemma2}
Let $1\leq r\leq\infty$, $1\leq p<\infty$ and $F\in C^1\left(\mathbb{C};\mathbb{C}\right)$ satisfy $\left\lvert F'(\xi)\right\rvert\lesssim\left\lvert\xi\right\rvert^{p-1}$. Then the following estimate holds:
\[ \left\lVert F(u)-F(v)\right\rVert_{L_x^r}\lesssim\left(\left\lVert u\right\rVert_{L_x^\infty}^{p-1}+\left\lVert v\right\rVert_{L_x^\infty}^{p-1}\right)\left\lVert u-v\right\rVert_{L_x^r}. \]
\end{lemma}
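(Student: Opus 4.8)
The plan is to reduce the claimed $L_x^r$ bound to a single pointwise inequality in $\mathbb{C}$ and then extract the $L_x^\infty$ norms by the trivial case of Hölder's inequality, $\lVert fg\rVert_{L_x^r}\leq\lVert f\rVert_{L_x^\infty}\lVert g\rVert_{L_x^r}$. First I would establish, for every pair $z,w\in\mathbb{C}$, the estimate
\[ \lvert F(z)-F(w)\rvert\lesssim\left(\lvert z\rvert^{p-1}+\lvert w\rvert^{p-1}\right)\lvert z-w\rvert. \]
Since $F\in C^1(\mathbb{C};\mathbb{C})$, that is, $F$ is continuously differentiable as a function of the two real variables $\R z,\I z$, I would write, by the fundamental theorem of calculus applied along the segment $\theta\mapsto w+\theta(z-w)$ for $\theta\in[0,1]$,
\[ F(z)-F(w)=\int_0^1\frac{d}{d\theta}F\!\left(w+\theta(z-w)\right)d\theta, \]
so that $\lvert F(z)-F(w)\rvert\leq\int_0^1\lvert F'(w+\theta(z-w))\rvert\,\lvert z-w\rvert\,d\theta$, where $F'$ denotes the full (real) differential whose norm is controlled by the hypothesis.

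Next I would bound the integrand. Using $\lvert F'(\xi)\rvert\lesssim\lvert\xi\rvert^{p-1}$ together with the convexity estimate $\lvert w+\theta(z-w)\rvert\leq(1-\theta)\lvert w\rvert+\theta\lvert z\rvert\leq\lvert z\rvert+\lvert w\rvert$, the integral is dominated by $(\lvert z\rvert+\lvert w\rvert)^{p-1}\lvert z-w\rvert$. The elementary inequality $(a+b)^{p-1}\lesssim a^{p-1}+b^{p-1}$, valid for $a,b\geq0$ and $p\geq1$, then yields the desired pointwise estimate.

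Finally, applying this with $z=u(x)$ and $w=v(x)$ gives, for a.e. $x$,
\[ \lvert F(u(x))-F(v(x))\rvert\lesssim\left(\lvert u(x)\rvert^{p-1}+\lvert v(x)\rvert^{p-1}\right)\lvert u(x)-v(x)\rvert\leq\left(\lVert u\rVert_{L_x^\infty}^{p-1}+\lVert v\rVert_{L_x^\infty}^{p-1}\right)\lvert u(x)-v(x)\rvert. \]
Taking the $L_x^r$ norm of both sides finishes the proof. The argument is essentially routine; the only point requiring a little care is the justification of the fundamental theorem of calculus for the $C^1$ complex-valued $F$, since $F$ need not be holomorphic, one must read $F'$ as the total real derivative acting on $z-w$ rather than as a complex derivative, and one must check that the constant in $(a+b)^{p-1}\lesssim a^{p-1}+b^{p-1}$ is uniform over the relevant range of $p$. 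Neither presents a genuine difficulty.
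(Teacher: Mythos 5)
Your proof is correct and matches the paper's intent: the paper dispatches this lemma with the one-line remark that it follows from H\"older's inequality, and your argument—writing $F(u)-F(v)=\int_0^1 F'(v+\theta(u-v))(u-v)\,d\theta$, bounding $(\lvert u\rvert+\lvert v\rvert)^{p-1}\lesssim\lvert u\rvert^{p-1}+\lvert v\rvert^{p-1}$, and applying the $L_x^\infty\times L_x^r$ case of H\"older—is precisely the standard filling-in of that remark (indeed the same FTC representation appears explicitly in the paper's proof of Lemma \ref{10.249.24 Main results theorem1 proof lemma3}). No gaps; the care you note about reading $F'$ as the total real derivative is exactly the right caveat since $F$ need not be holomorphic.
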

\begin{lemma}
\label{10.9Estimates of the nonlinearity Lemma1}
Let $s\geq0$, $1\leq r\leq\infty$, $1\leq p<\infty$ and $F\in C\left(\mathbb{C};\mathbb{C}\right)$ satisfying $\left\lvert F(\xi)\right\rvert\lesssim\left\lvert\xi\right\rvert^p$. $F$ maps $H_x^s\cap L_x^\infty$ boundedly with the estimate
\[ \left\lVert F(u)\right\rVert_{H_x^{s,r}}\lesssim\left\lVert u\right\rVert_{L_x^\infty}^{p-1}\left\lVert u\right\rVert_{H_x^{s,r}}. \]
\end{lemma}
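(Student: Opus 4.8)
The plan is to reduce the Bessel-potential estimate to an equivalent pair of $L_x^r$-estimates and then to invoke a fractional chain rule for the genuinely fractional part. Since $s\geq0$, I would first use the norm equivalence
\[ \left\lVert F(u)\right\rVert_{H_x^{s,r}}\sim\left\lVert F(u)\right\rVert_{L_x^r}+\left\lVert\left\lvert\nabla\right\rvert^sF(u)\right\rVert_{L_x^r}, \]
valid for $1<r<\infty$, so that the problem splits into an elementary low-order term and a fractional high-order term. The low-order term is immediate: from the hypothesis $\left\lvert F(\xi)\right\rvert\lesssim\left\lvert\xi\right\rvert^p$ I would write $\left\lvert F(u)\right\rvert\lesssim\left\lvert u\right\rvert^{p-1}\left\lvert u\right\rvert\leq\left\lVert u\right\rVert_{L_x^\infty}^{p-1}\left\lvert u\right\rvert$, whence $\left\lVert F(u)\right\rVert_{L_x^r}\lesssim\left\lVert u\right\rVert_{L_x^\infty}^{p-1}\left\lVert u\right\rVert_{L_x^r}\leq\left\lVert u\right\rVert_{L_x^\infty}^{p-1}\left\lVert u\right\rVert_{H_x^{s,r}}$.

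For the leading term my main tool would be the fractional chain rule (Christ--Weinstein/Staffilani type): for $0<s<1$ and $1<r<\infty$ one has
\[ \left\lVert\left\lvert\nabla\right\rvert^sF(u)\right\rVert_{L_x^r}\lesssim\left\lVert F'(u)\right\rVert_{L_x^\infty}\left\lVert\left\lvert\nabla\right\rvert^su\right\rVert_{L_x^r}, \]
after which the standing hypothesis $\left\lvert F'(z)\right\rvert\lesssim\left\lvert z\right\rvert^{p-1}$ gives $\left\lVert F'(u)\right\rVert_{L_x^\infty}\lesssim\left\lVert u\right\rVert_{L_x^\infty}^{p-1}$ pointwise and closes the estimate. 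For integer or larger $s$ I would combine this with the fractional Leibniz (Kato--Ponce) rule applied to the derivatives of $F(u)$ produced by the classical chain rule, each factor of which is again controlled by a power of $\left\lVert u\right\rVert_{L_x^\infty}$ times one surviving factor $\left\lVert\left\lvert\nabla\right\rvert^su\right\rVert_{L_x^r}$; this yields the tame, linear-in-$\left\lVert u\right\rVert_{H_x^{s,r}}$ form of the bound.

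An equivalent and more self-contained route, which I would prefer because it avoids differentiating $F$ at high order, is the finite-difference characterization. For $0<s<1$ I would use $\left\lVert f\right\rVert_{\dot{B}_{r,\infty}^s}\sim\sup_{h}\left\lvert h\right\rvert^{-s}\left\lVert f(\cdot+h)-f\right\rVert_{L_x^r}$, estimate $\left\lvert F(u(x+h))-F(u(x))\right\rvert\lesssim\left\lVert u\right\rVert_{L_x^\infty}^{p-1}\left\lvert u(x+h)-u(x)\right\rvert$ by the mean value theorem, and transfer between the Besov scale and $H_x^{s,r}$ via the embeddings $B_{r,1}^s\hookrightarrow H_x^{s,r}\hookrightarrow B_{r,\infty}^s$ together with a Littlewood--Paley decomposition. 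This is also the natural place to accommodate the endpoints $r=1,\infty$ flagged in the preceding remark, where the square-function description of $H_x^{s,r}$ is unavailable.

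The hard part will be precisely the interplay of the fractional order with the full range of $r$: the chain/Leibniz rules and the equivalence $\left\lVert\cdot\right\rVert_{H_x^{s,r}}\sim\left\lVert\cdot\right\rVert_{L_x^r}+\left\lVert\left\lvert\nabla\right\rvert^s\cdot\right\rVert_{L_x^r}$ are clean for $1<r<\infty$ but degenerate at $r\in\{1,\infty\}$, and for $s\geq1$ one must verify that the classical chain rule produces only the advertised single factor $\left\lVert\left\lvert\nabla\right\rvert^su\right\rVert_{L_x^r}$, all other factors being absorbed into powers of $\left\lVert u\right\rVert_{L_x^\infty}$, rather than a genuinely nonlinear dependence on higher Sobolev norms. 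I expect to resolve both issues by routing the argument through the Besov spaces $B_{r,l}^s$ of the remark and using a paraproduct decomposition, so that the estimate is established first at the Besov level for all $1\leq r\leq\infty$ and then transferred to $H_x^{s,r}$.
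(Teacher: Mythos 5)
Your proposal has a genuine gap: both of your routes to the fractional part of the estimate differentiate $F$, but the lemma does not allow you to. The hypotheses here are only $F\in C\left(\mathbb{C};\mathbb{C}\right)$ and $\left\lvert F(\xi)\right\rvert\lesssim\left\lvert\xi\right\rvert^p$; the bound $\left\lvert F'(z)\right\rvert\lesssim\left\lvert z\right\rvert^{p-1}$ that you call ``the standing hypothesis'' belongs to $(\ref{hypothesis on nonlinear F})$, i.e.\ to the nonlinearity of the PDE, not to this lemma. The weakening is deliberate: in the proof of Lemma $\ref{10.249.24 Main results theorem1 proof lemma3}$ the present lemma is applied to $F'(v+t(u-v))$, which is merely continuous and satisfies $\left\lvert F'(\xi)\right\rvert\lesssim\left\lvert\xi\right\rvert^{p-1}$, so a proof requiring $F\in C^1$ (the Christ--Weinstein chain rule), let alone the higher smoothness of $F$ your chain-rule/Kato--Ponce step needs when $s\geq1$, establishes a statement too weak for the paper's purposes. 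The same objection defeats your finite-difference route: the pointwise inequality $\left\lvert F(z_1)-F(z_2)\right\rvert\lesssim\left(\left\lvert z_1\right\rvert\vee\left\lvert z_2\right\rvert\right)^{p-1}\left\lvert z_1-z_2\right\rvert$ does not follow from continuity and $\left\lvert F(z)\right\rvert\lesssim\left\lvert z\right\rvert^p$ alone --- consider $F(z)=\left\lvert z\right\rvert^p\sin\left(\left\lvert z\right\rvert^{-1}\right)$ for $z\neq0$, $F(0)=0$, whose increments between nearby level sets of $\sin$ are of size $\left\lvert z\right\rvert^p$ while the required bound is of size $\left\lvert z\right\rvert^{p+1}$ --- so the mean-value step in your Besov argument is not merely unjustified but false under the stated hypotheses.

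The paper's own proof never touches $F'$. It writes $\left\lvert\nabla\right\rvert^sF(u)$ via the kernel identity $\left\lvert\nabla\right\rvert^sv=\pi^{-s-\frac{n}{2}}\frac{\Gamma\left(\frac{n+s}{2}\right)}{\Gamma\left(-\frac{s}{2}\right)}\int_{\mathbb{R}^n}\left\lvert x-y\right\rvert^{-n-s}v(y)dy$ (interpreted by analytic continuation), factors $F(u(y))=\frac{F(u(y))}{\left\lvert u(y)\right\rvert^p}\left\lvert u(y)\right\rvert^p$, pulls out $\left\lVert F(u)/\left\lvert u\right\rvert^p\right\rVert_{L_x^\infty}\lesssim1$, and uses $\left\lvert u\right\rvert^p\leq\left\lVert u\right\rVert_{L_x^\infty}^{p-1}\left\lvert u\right\rvert$, so that the surviving expression is the same kernel applied to $\left\lvert u\right\rvert$ and the estimate closes as $\left\lVert u\right\rVert_{L_x^\infty}^{p-1}\left\lVert\left\lvert\nabla\right\rvert^su\right\rVert_{L_x^r}$, uniformly in $s>0$ and using nothing beyond the size bound on $F$. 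To repair your write-up, replace the chain-rule and mean-value steps by this factorization argument (or else restate the lemma with a $C^1$ hypothesis and accept that Lemma $\ref{10.249.24 Main results theorem1 proof lemma3}$ then requires a separate treatment); your low-order term and the splitting $\left\lVert F(u)\right\rVert_{H_x^{s,r}}\sim\left\lVert F(u)\right\rVert_{L_x^r}+\left\lVert\left\lvert\nabla\right\rvert^sF(u)\right\rVert_{L_x^r}$ can be kept as they are, since the paper uses the same reduction.
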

\begin{proof}[Proof]
The case $s=0$ can be proved by H\"older's inequality. For the case $s>0$, using the identity{\cite[(6.1.2)]{Modern-Fourier-Analysis}}:
\[ \left\lvert\nabla\right\rvert^su=\pi^{-s-\frac{n}{2}}\frac{\Gamma\left(\frac{n+s}{2}\right)}{\Gamma\left(-\frac{s}{2}\right)}\int_{\mathbb{R}^n}\left\lvert x-y\right\rvert^{-n-s}u(y)dy, \]
we have
\begin{align*}
\left\lVert\left\lvert\nabla\right\rvert^sF(u)\right\rVert_{L_x^r}&=\left\lVert\pi^{-s-\frac{n}{2}}\frac{\Gamma\left(\frac{n+s}{2}\right)}{\Gamma\left(-\frac{s}{2}\right)}\int_{\mathbb{R}^n}\left\lvert x-y\right\rvert^{-n-s}F(u(y))dy\right\rVert_{L_x^r}\\
&=\left\lVert\pi^{-s-\frac{n}{2}}\frac{\Gamma\left(\frac{n+s}{2}\right)}{\Gamma\left(-\frac{s}{2}\right)}\int_{\mathbb{R}^n}\left\lvert x-y\right\rvert^{-n-s}\frac{F(u(y))}{\left\lvert u(y)\right\rvert^p}\left\lvert u(y)\right\rvert^pdy\right\rVert_{L_x^r}\\
&\leq\left\lVert\frac{F(u)}{\left\lvert u\right\rvert^p}\right\rVert_{L_x^\infty}\left\lVert\pi^{-s-\frac{n}{2}}\frac{\Gamma\left(\frac{n+s}{2}\right)}{\Gamma\left(-\frac{s}{2}\right)}\int_{\mathbb{R}^n}\left\lvert x-y\right\rvert^{-n-s}\left\lvert u(y)\right\rvert^pdy\right\rVert_{L_x^r}\\
&\lesssim\left\lVert u\right\rVert_{L_x^\infty}^{p-1}\left\lVert\pi^{-s-\frac{n}{2}}\frac{\Gamma\left(\frac{n+s}{2}\right)}{\Gamma\left(-\frac{s}{2}\right)}\int_{\mathbb{R}^n}\left\lvert x-y\right\rvert^{-n-s}\left\lvert u(y)\right\rvert dy\right\rVert_{L_x^r}\\
&=\left\lVert u\right\rVert_{L_x^\infty}^{p-1}\left\lVert\left\lvert\nabla\right\rvert^su\right\rVert_{L_x^r}.
\end{align*}

Therefore,
\[ \left\lVert F(u)\right\rVert_{H_x^{s,r}}\sim\left\lVert F(u)\right\rVert_{L_x^r}+\left\lVert\left\lvert\nabla\right\rvert^sF(u)\right\rVert_{L_x^r}\lesssim\left\lVert u\right\rVert_{L_x^\infty}^{p-1}\left\lVert u\right\rVert_{H_x^{s,r}}. \]
\end{proof}
\begin{lemma}
\label{10.249.24 Main results theorem1 proof lemma3}
Let $s\geq0$, $2\leq p<\infty$ and $F\in C^1\left(\mathbb{C};\mathbb{C}\right)$ satisfy $\left\lvert F'(\xi)\right\rvert\lesssim\left\lvert\xi\right\rvert^{p-1}$. Then
\begin{align*}
\left\lVert F(u)-F(v)\right\rVert_{H_x^{s,r}}\lesssim&\left(\left\lVert u\right\rVert_{L_x^\infty}^{p-2}+\left\lVert v\right\rVert_{L_x^\infty}^{p-2}\right)\left(\left\lVert u\right\rVert_{H_x^{s,r}}+\left\lVert v\right\rVert_{H_x^{s,r}}\right)\left\lVert u-v\right\rVert_{L_x^\infty}\\
&+\left(\left\lVert u\right\rVert_{L_x^\infty}^{p-1}+\left\lVert v\right\rVert_{L_x^\infty}^{p-1}\right)\left\lVert u-v\right\rVert_{H_x^{s,r}}.
\end{align*}
\end{lemma}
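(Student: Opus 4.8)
The plan is to split the $H_x^{s,r}$ norm as $\|F(u)-F(v)\|_{H_x^{s,r}}\sim\|F(u)-F(v)\|_{L_x^r}+\||\nabla|^s(F(u)-F(v))\|_{L_x^r}$ (for $s\ge0$) and to treat the two pieces separately. The low-order piece $\|F(u)-F(v)\|_{L_x^r}$ is already controlled by Lemma \ref{10.10 Main results theorem1 proof lemma2}, which yields $(\|u\|_{L_x^\infty}^{p-1}+\|v\|_{L_x^\infty}^{p-1})\|u-v\|_{L_x^r}$; since $\|u-v\|_{L_x^r}\le\|u-v\|_{H_x^{s,r}}$, this is absorbed into the second term of the claimed bound. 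Hence it remains to estimate $\||\nabla|^s(F(u)-F(v))\|_{L_x^r}$ for $s>0$, the case $s=0$ being Lemma \ref{10.10 Main results theorem1 proof lemma2} itself.

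To expose a product structure I would write the difference through the fundamental theorem of calculus along the segment joining $v$ to $u$. Setting $w=u-v$ and $w_\theta=v+\theta w$, and using that $F\in C^1(\mathbb{C};\mathbb{C})$ is differentiable in the real sense with Wirtinger derivatives $\partial_zF,\partial_{\bar z}F$ both bounded by $|\cdot|^{p-1}$, one has
\[ F(u)-F(v)=\int_0^1\left(\partial_zF(w_\theta)\,w+\partial_{\bar z}F(w_\theta)\,\overline{w}\right)d\theta. \]
Each summand is a product of a coefficient $\partial_zF(w_\theta)$ (or $\partial_{\bar z}F(w_\theta)$) with $w$ (or $\overline w$). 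Applying Minkowski's integral inequality to move $|\nabla|^s$ and the $L_x^r$ norm inside the $\theta$-integral, and then a fractional Leibniz (product) estimate of the form $\||\nabla|^s(fg)\|_{L_x^r}\lesssim\||\nabla|^sf\|_{L_x^r}\|g\|_{L_x^\infty}+\|f\|_{L_x^\infty}\||\nabla|^sg\|_{L_x^r}$ to each product, I reduce matters to bounding, uniformly in $\theta\in[0,1]$, the factors $\|\partial_zF(w_\theta)\|_{L_x^\infty}$ and $\||\nabla|^s\partial_zF(w_\theta)\|_{L_x^r}$ together with $\|w\|_{L_x^\infty}$ and $\||\nabla|^sw\|_{L_x^r}$. (The required product estimate can itself be obtained from the same integral representation of $|\nabla|^s$ used below, via the splitting $f(x)g(x)-f(y)g(y)=f(x)(g(x)-g(y))+(f(x)-f(y))g(y)$.)

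The two easy factors are immediate: $\|\partial_zF(w_\theta)\|_{L_x^\infty}\lesssim\|w_\theta\|_{L_x^\infty}^{p-1}\lesssim\|u\|_{L_x^\infty}^{p-1}+\|v\|_{L_x^\infty}^{p-1}$, and $\||\nabla|^sw\|_{L_x^r}\le\|u-v\|_{H_x^{s,r}}$; pairing these through the Leibniz rule produces exactly the second term of the statement. For the remaining factor $\||\nabla|^s\partial_zF(w_\theta)\|_{L_x^r}$ I would run the argument of Lemma \ref{10.9Estimates of the nonlinearity Lemma1} verbatim, now with the continuous function $\partial_zF$, which satisfies $|\partial_zF(\xi)|\lesssim|\xi|^{p-1}$, playing the role of $F$ and with exponent $p-1$ in place of $p$: the singular integral representation of $|\nabla|^s$ together with the pointwise bound $|\partial_zF(w_\theta)|\lesssim\|w_\theta\|_{L_x^\infty}^{p-2}|w_\theta|$ gives
\[ \||\nabla|^s\partial_zF(w_\theta)\|_{L_x^r}\lesssim\|w_\theta\|_{L_x^\infty}^{p-2}\||\nabla|^sw_\theta\|_{L_x^r}\lesssim\left(\|u\|_{L_x^\infty}^{p-2}+\|v\|_{L_x^\infty}^{p-2}\right)\left(\|u\|_{H_x^{s,r}}+\|v\|_{H_x^{s,r}}\right). \]
Pairing this with $\|w\|_{L_x^\infty}=\|u-v\|_{L_x^\infty}$ produces the first term; the conjugate summands with $\partial_{\bar z}F$ are identical, and the $\theta$-integration contributes only a harmless constant.

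The main obstacle is precisely the factor $\||\nabla|^s\partial_zF(w_\theta)\|_{L_x^r}$. Here the hypothesis $p\ge2$ is essential: it guarantees $p-2\ge0$, so that $\|w_\theta\|_{L_x^\infty}^{p-2}$ is a genuine nonnegative-power constant and the pointwise splitting $|\partial_zF(w_\theta)|\lesssim\|w_\theta\|_{L_x^\infty}^{p-2}|w_\theta|$ is licit. The delicate point is that $\partial_zF$ is only continuous, so it cannot be differentiated classically and no chain rule is available; the whole reason to route the estimate through the integral representation of $|\nabla|^s$ is that this representation needs only the magnitude bound on $\partial_zF$ and the positivity (fixed sign) of the kernel, exactly as in the proof of Lemma \ref{10.9Estimates of the nonlinearity Lemma1}. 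Some care is also needed to justify the fractional Leibniz estimate and the interchange of $|\nabla|^s$ with the $\theta$-integration, but both are routine once the product is set up.
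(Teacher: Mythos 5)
Your proposal is correct and takes essentially the same approach as the paper: the paper's proof likewise writes $F(u)-F(v)=\int_0^1F'(v+t(u-v))(u-v)\,dt$, applies the fractional Leibniz rule in $H_x^{s,r}$, and controls the composition factor $\left\lVert F'(v+t(u-v))\right\rVert_{H_x^{s,r}}$ by applying Lemma \ref{10.9Estimates of the nonlinearity Lemma1} to $F'$ with exponent $p-1$ (using $p\geq2$), which is precisely your treatment of $\left\lVert\left\lvert\nabla\right\rvert^s\partial_zF(w_\theta)\right\rVert_{L_x^r}$. Your additional care with Wirtinger derivatives and the splitting of the $H_x^{s,r}$ norm into $L_x^r$ and $\left\lvert\nabla\right\rvert^sL_x^r$ pieces is just a more explicit rendering of the same argument.
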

\begin{proof}[Proof]
By fractional Leibniz's rule{\cite[Theorem 1.4]{Exact-smoothing-properties-of-Schrodinger-semigroups}}, we have
\begin{align*}
&\left\lVert F(u)-F(v)\right\rVert_{H_x^{s,r}}\\
&\leq\int_0^1\left\lVert F'(v+t(u-v))(u-v)\right\rVert_{H_x^{s,r}}dt\\
&\lesssim\int_0^1\left\lVert F'(v+t(u-v))\right\rVert_{L_x^\infty}\left\lVert u-v\right\rVert_{H_x^{s,r}}+\left\lVert F'(v+t(u-v))\right\rVert_{H_x^{s,r}}\left\lVert u-v\right\rVert_{L_x^\infty}dt.
\end{align*}

Using Lemma $\ref{10.9Estimates of the nonlinearity Lemma1}$ we can obtain the result.
\end{proof}
\subsection{A priori estimates}
\label{11.5A priori estimates}
The following lemma tells us the mass of the solution to $(\ref{9.5 space time fractional Schrodinger equation})$ is not conserved with respect to time.
\begin{lemma}
\label{10.8A priori estimates1}
Let $u_0\in L_x^2$ and $F$ satisfy the hypothesis $(\ref{10.8 hypothesis on nonlinear F2})$. Then the solution of $(\ref{9.5 space time fractional Schrodinger equation})$ satisfies
\[ \left\lVert u(t)\right\rVert_{L_x^2}\leq\left\lVert u_0\right\rVert_{L_x^2}. \]
\end{lemma}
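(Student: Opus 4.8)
The plan is to run a fractional-in-time analogue of the classical mass identity: pair the equation with $\overline{u}$ in $L_x^2$, integrate over $\mathbb{R}^n$, and extract the imaginary part. Writing $\langle f,g\rangle=\int_{\mathbb{R}^n}\overline{f}g\,dx$, multiplying $(\ref{9.5 space time fractional Schrodinger equation})$ by $\overline{u}$ and integrating gives
\[ i\langle u,\partial_t^\alpha u\rangle-\langle u,\left(-\Delta\right)^\beta u\rangle+\langle u,F(u)\rangle=0. \]
Taking imaginary parts, the middle term is real since $\left(-\Delta\right)^\beta$ is self-adjoint (its Fourier symbol $\left\lvert\xi\right\rvert^{2\beta}$ is real), and $\I\langle u,F(u)\rangle=\int_{\mathbb{R}^n}\I\left(\overline{u}F(u)\right)dx=0$ by hypothesis $(\ref{10.8 hypothesis on nonlinear F2})$; using $\I(iz)=\R z$, what survives is $\R\langle u,\partial_t^\alpha u\rangle=0$.

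The crux is then a fractional chain-rule inequality in the Hilbert space $L_x^2$, namely
\[ \R\langle u(t),\partial_t^\alpha u(t)\rangle\geq\tfrac{1}{2}\partial_t^\alpha\left\lVert u(t)\right\rVert_{L_x^2}^2. \]
I would prove it by writing, for $u$ absolutely continuous in time, $\partial_t^\alpha u(t)=\frac{1}{\Gamma(1-\alpha)}\int_0^t(t-\tau)^{-\alpha}u'(\tau)d\tau$ and subtracting the two sides. Since $\frac{d}{d\tau}\left\lVert u\right\rVert_{L_x^2}^2=2\R\langle u,u'\rangle$, the difference equals $\frac{1}{\Gamma(1-\alpha)}\int_0^t(t-\tau)^{-\alpha}\R\langle u(t)-u(\tau),u'(\tau)\rangle d\tau$. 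Setting $w(\tau)=u(t)-u(\tau)$ (so $w'=-u'$) turns the integrand into $-\tfrac{1}{2}(t-\tau)^{-\alpha}\frac{d}{d\tau}\left\lVert w(\tau)\right\rVert_{L_x^2}^2$; integrating by parts, the boundary term at $\tau=t$ vanishes because $\left\lVert w(\tau)\right\rVert_{L_x^2}^2=O((t-\tau)^2)$ dominates $(t-\tau)^{-\alpha}$, and one is left with
\[ \frac{1}{2\Gamma(1-\alpha)}\left(t^{-\alpha}\left\lVert u(t)-u_0\right\rVert_{L_x^2}^2+\alpha\int_0^t(t-\tau)^{-\alpha-1}\left\lVert u(t)-u(\tau)\right\rVert_{L_x^2}^2 d\tau\right)\geq0, \]
which is manifestly nonnegative as $\Gamma(1-\alpha)>0$ for $\alpha\in(0,1)$. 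Combined with $\R\langle u,\partial_t^\alpha u\rangle=0$, this forces $\partial_t^\alpha\left\lVert u(t)\right\rVert_{L_x^2}^2\leq0$.

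Finally I would invoke a comparison principle for the Caputo derivative. Setting $y(t)=\left\lVert u(t)\right\rVert_{L_x^2}^2$, we have $\partial_t^\alpha y\leq0$ with $y(0)=\left\lVert u_0\right\rVert_{L_x^2}^2$. Applying the Riemann-Liouville integral $I^\alpha f(t)=\frac{1}{\Gamma(\alpha)}\int_0^t(t-\tau)^{\alpha-1}f(\tau)d\tau$ and the composition identity $I^\alpha\partial_t^\alpha y=y(t)-y(0)$, the positivity of the kernel $(t-\tau)^{\alpha-1}$ yields $y(t)-y(0)=I^\alpha(\partial_t^\alpha y)\leq0$, i.e. $\left\lVert u(t)\right\rVert_{L_x^2}\leq\left\lVert u_0\right\rVert_{L_x^2}$.

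The main obstacle is not the algebra but the regularity needed to legitimize it: the solution furnished by the integral equation $(\ref{9.5 The equivalent integral equation equation1})$ is only a mild solution, whereas the chain-rule inequality and the integration by parts presuppose that $t\mapsto u(t)$ is absolutely continuous into $L_x^2$ with the boundary term genuinely controlled near $\tau=t$. I would handle this either by first proving the estimate for smooth solutions (or a regularized equation) and then passing to the limit via the continuous dependence already supplied by the well-posedness theory, or by recasting the argument directly in terms of the paper's definition of $\partial_t^\alpha$ so that only the time-regularity already guaranteed for $u$ is invoked. The remaining ingredients — self-adjointness of $\left(-\Delta\right)^\beta$, vanishing of $\I\left(\overline{u}F(u)\right)$, and positivity of $I^\alpha$ — are routine.
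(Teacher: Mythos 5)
Your proposal is correct and follows essentially the same route as the paper: multiply the equation by $\overline{u}$, take the imaginary part (using self-adjointness of $\left(-\Delta\right)^\beta$ and hypothesis $(\ref{10.8 hypothesis on nonlinear F2})$), and invoke the fractional chain-rule inequality $\R\left(\overline{u}\partial_t^\alpha u\right)\gtrsim\partial_t^\alpha\left\lvert u\right\rvert^2$, which the paper simply cites as a fact and you prove in its integrated $L_x^2$ form with the sharp constant $\tfrac{1}{2}$, before concluding via positivity of the fractional integral. Your write-up merely fills in details the paper's one-line sketch omits (the integration-by-parts proof of the key inequality, the comparison step $I^\alpha\partial_t^\alpha y=y(t)-y(0)$, and the regularity caveats), so the underlying argument is the same.
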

\begin{proof}[Proof]
The result can be proved by multiplying $(\ref{9.5 space time fractional Schrodinger equation})$ by $\overline{u}$ and considering the imaginary part and using the fact
\[ \R\left(\overline{u}\partial_t^\alpha u\right)\gtrsim\partial_t^\alpha\left\lvert u\right\rvert^2. \]
\end{proof}
We know from the following lemma that the energy of the solution to $(\ref{9.5 space time fractional Schrodinger equation})$ is not conserved with respect to time.
\begin{lemma}
\label{10.8A priori estimates2}
Let $u_0\in H_x^{\frac{\beta}{2}}$ such that $\int_{\mathbb{R}^n}G(u_0)dx<\infty$ and $F$ satisfy the hypothesis $(\ref{10.8 hypothesis on nonlinear F3})$. Then the solution of $(\ref{9.5 space time fractional Schrodinger equation})$ satisfies
\[ \left\lVert\left(-\Delta\right)^{\frac{\beta}{2}}u(t)\right\rVert_{L_x^2}^2+\int_{\mathbb{R}^n}G(u(t))dx\lesssim I(u_0), \]
where $I(u_0)=\left\lVert\left(-\Delta\right)^{\frac{\beta}{2}}u_0\right\rVert_{L_x^2}^2+\int_{\mathbb{R}^n}G(u_0)dx$.
\end{lemma}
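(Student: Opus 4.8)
The plan is to run the energy analogue of the mass estimate in Lemma \ref{10.8A priori estimates1}: instead of pairing the equation with $\overline{u}$, I would pair $(\ref{9.5 space time fractional Schrodinger equation})$ with $\partial_t^\alpha\overline{u}$, integrate over $\mathbb{R}^n$, and take the real part. Because $\R\left(i\left\lvert\partial_t^\alpha u\right\rvert^2\right)=0$, the time-derivative term disappears and the identity collapses to
\[ \R\int_{\mathbb{R}^n}\left(-\Delta\right)^\beta u\,\partial_t^\alpha\overline{u}\,dx=\R\int_{\mathbb{R}^n}F(u)\,\partial_t^\alpha\overline{u}\,dx. \]
The first step is to rewrite the left-hand side: using the self-adjointness of $\left(-\Delta\right)^{\frac{\beta}{2}}$ and the fact that $\partial_t^\alpha$ commutes with spatial Fourier multipliers, and setting $w=\left(-\Delta\right)^{\frac{\beta}{2}}u$, it becomes $\R\int_{\mathbb{R}^n}w\,\partial_t^\alpha\overline{w}\,dx$.

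The second step supplies the two one-sided bounds. For the left-hand side I would invoke the Caputo product inequality already used in Lemma \ref{10.8A priori estimates1}, namely $\R\left(\overline{w}\,\partial_t^\alpha w\right)\gtrsim\partial_t^\alpha\left\lvert w\right\rvert^2$ (conjugation gives the same bound for $\R(w\,\partial_t^\alpha\overline{w})$); integrating in $x$ yields $\R\int_{\mathbb{R}^n}\left(-\Delta\right)^\beta u\,\partial_t^\alpha\overline{u}\,dx\gtrsim\partial_t^\alpha\left\lVert\left(-\Delta\right)^{\frac{\beta}{2}}u\right\rVert_{L_x^2}^2$. For the right-hand side, the structural hypothesis $(\ref{10.8 hypothesis on nonlinear F3})$ gives the pointwise bound $\R\left(F(u)\partial_t^\alpha\overline{u}\right)\lesssim-\partial_t^\alpha G(u)$, which, after integrating in $x$ and pulling $\partial_t^\alpha$ out of the spatial integral, reads $\R\int_{\mathbb{R}^n}F(u)\,\partial_t^\alpha\overline{u}\,dx\lesssim-\partial_t^\alpha\int_{\mathbb{R}^n}G(u)\,dx$. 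Chaining these through the identity above and absorbing the positive constants into a weighted energy $E(t)\sim\left\lVert\left(-\Delta\right)^{\frac{\beta}{2}}u(t)\right\rVert_{L_x^2}^2+\int_{\mathbb{R}^n}G(u(t))\,dx$ produces $\partial_t^\alpha E(t)\leq0$.

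The last step is a fractional monotonicity argument. Applying the Riemann--Liouville integral of order $\alpha$, whose kernel $\frac{(t-\tau)^{\alpha-1}}{\Gamma(\alpha)}$ is nonnegative and which inverts the Caputo derivative up to the initial value (so that it sends $\partial_t^\alpha E$ to $E(t)-E(0)$), to the inequality $\partial_t^\alpha E(t)\leq0$ gives $E(t)\leq E(0)$. Since $G\geq0$ and the constants are positive, $E$ is comparable to the energy on both ends, whence $\left\lVert\left(-\Delta\right)^{\frac{\beta}{2}}u(t)\right\rVert_{L_x^2}^2+\int_{\mathbb{R}^n}G(u(t))\,dx\lesssim E(t)\leq E(0)\lesssim I(u_0)$, which is the claim.

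The main obstacle I anticipate is making these formal manipulations rigorous at the regularity afforded by $u_0\in H_x^{\frac{\beta}{2}}$ with $\int_{\mathbb{R}^n}G(u_0)\,dx<\infty$: legitimizing the pairing with $\partial_t^\alpha\overline{u}$, the interchange of $\partial_t^\alpha$ with $\left(-\Delta\right)^{\frac{\beta}{2}}$ and with integration in $x$, and, most delicately, the pointwise Caputo inequality $\R\left(\overline{w}\,\partial_t^\alpha w\right)\gtrsim\partial_t^\alpha\left\lvert w\right\rvert^2$, which is the fractional surrogate for the Leibniz rule and is valid only in an integrated sense. As with Lemma \ref{10.8A priori estimates1}, I would carry out the computation on smooth approximations (or suitably regularized data) and pass to the limit, treating the inequality throughout as an a priori estimate.
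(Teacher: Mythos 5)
Your proposal is correct and takes exactly the paper's route: multiply the equation by $\partial_t^\alpha\overline{u}$, take the real part, and use the self-adjointness identity $\int_{\mathbb{R}^n}\left(-\Delta\right)^\beta u\,\partial_t^\alpha\overline{u}\,dx=\int_{\mathbb{R}^n}\left(-\Delta\right)^{\frac{\beta}{2}}u\,\partial_t^\alpha\left(-\Delta\right)^{\frac{\beta}{2}}\overline{u}\,dx$, combined with the Caputo product inequality from Lemma \ref{10.8A priori estimates1} and hypothesis $(\ref{10.8 hypothesis on nonlinear F3})$. The paper's proof is a single sentence, and your write-up merely makes explicit the steps it leaves implicit (in particular the final monotonicity step of applying the order-$\alpha$ Riemann--Liouville integral to $\partial_t^\alpha E\leq0$).
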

\begin{proof}[Proof]
Multiplying $(\ref{9.5 space time fractional Schrodinger equation})$ by $\partial_t^\alpha\overline{u}$, considering the real part and using the fact
\[ \int_{\mathbb{R}^n}\left(-\Delta\right)^\beta u\partial_t^\alpha\overline{u}dx=\int_{\mathbb{R}^n}\left(-\Delta\right)^{\frac{\beta}{2}}u\partial_t^\alpha\left(-\Delta\right)^{\frac{\beta}{2}}\overline{u}dx \]
we can complete the proof.
\end{proof}
\begin{remark}
In view of Lemma $\ref{10.8A priori estimates1}$ and Lemma $\ref{10.8A priori estimates2}$, if $u_0\in H_x^{\frac{\beta}{2}}$ such that $\int_{\mathbb{R}^n}G(u_0)dx<\infty$ and $F$ satisfies hypotheses $(\ref{10.8 hypothesis on nonlinear F2})$ and $(\ref{10.8 hypothesis on nonlinear F3})$, the solution of $(\ref{9.5 space time fractional Schrodinger equation})$ satisfies
\begin{equation}
\left\lVert u(t)\right\rVert_{H_x^{\frac{\beta}{2}}}^2+\int_{\mathbb{R}^n}G(u(t))dx\lesssim E(u_0),
\label{10.8A priori estimates equation1}
\end{equation}
where
\[ E(u_0)=\left\lVert u_0\right\rVert_{H_x^{\frac{\beta}{2}}}^2+\int_{\mathbb{R}^n}G(u_0)dx. \]
\label{10.8A priori estimates3}
\end{remark}
\section{Proof of Theorems $\ref{20240515locla well-posendess}$ and $\ref{20240518locla well-posendessII}$}
Firstly, we define two function spaces.
\begin{align*}
&\mathcal{X}_\gamma^{s,r}(T):=\left\{u\in L_T^\infty H_x^{s,r}:t^\gamma u\in L_T^\infty L_x^\infty\right\},\\
&\overline{\mathcal{X}_\gamma^{s,r}(T)}:=\left\{u\in C_TH_x^{s,r}:t^\gamma u\in C_TL_x^\infty,\;\lim\limits_{t\to0}t^\gamma\left\lVert u(t)\right\rVert_{L_x^\infty}=0\right\}.
\end{align*}

Let $\Phi u=S_tu_0+i\mathcal{M}F(u)$.
\begin{lemma}
\label{20240516local wellposedness lemma}
$\Phi$ maps $\mathcal{X}_\gamma^{s,r}(T)$ into $\overline{\mathcal{X}_\gamma^{s,r}(T)}$ under the hypotheses in Theorem $\ref{20240515locla well-posendess}$.
\end{lemma}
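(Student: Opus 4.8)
The plan is to prove the three properties defining $\overline{\mathcal{X}_\gamma^{s,r}(T)}$ — namely $\Phi u\in C_TH_x^{s,r}$, $t^\gamma\Phi u\in C_TL_x^\infty$, and $\lim_{t\to0}t^\gamma\|\Phi u(t)\|_{L_x^\infty}=0$ — by splitting $\Phi u=S_tu_0+i\mathcal{M}F(u)$ into its linear and Duhamel parts and treating each separately. Throughout I would use that membership $u\in\mathcal{X}_\gamma^{s,r}(T)$ supplies the two a priori bounds $\|u(\tau)\|_{H_x^{s,r}}\lesssim A$ and $\|u(\tau)\|_{L_x^\infty}\lesssim B\tau^{-\gamma}$, together with the embedding $H_x^{s,r}\hookrightarrow L_x^r$.

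For the linear term the required bounds follow at once from the decay estimate of Proposition \ref{9.24 decay estimates2} (in its $H_x^{s,r}$ form given by the remark after Proposition \ref{10.10 Holder type estimates}). Taking $\theta=0$ and source space $L_x^q$ yields $\|S_tu_0\|_{L_x^\infty}\lesssim t^{-\sigma n/q}\|u_0\|_{L_x^q}$, so that $t^\gamma\|S_tu_0\|_{L_x^\infty}\lesssim t^{\gamma-\sigma n/q}\|u_0\|_{L_x^q}$ is bounded on $[0,T]$ and tends to $0$ as $t\to0$ precisely because $\gamma>\sigma n/q$. Strong continuity of $t\mapsto S_tu_0$ into $H_x^{s,r}$ and of $t\mapsto t^\gamma S_tu_0$ into $L_x^\infty$ on $(0,T]$ follows from the Hölder type estimate of Proposition \ref{10.10 Holder type estimates}, whose singular prefactor $(t_1\wedge t_2)^{-1}$ is harmless away from $t=0$; continuity at $t=0$ with value $u_0$ I would obtain from the uniform bound $\|S_t\|_{H_x^{s,r}\to H_x^{s,r}}\lesssim1$ (the case $q=r$, $\theta=0$) together with a density argument, since the Hölder estimate degenerates there.

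For the Duhamel term the $H_x^{s,r}$ estimate is cleanest. Proposition \ref{9.24 decay estimates2} with $q=r$, $\theta=0$ gives $\|P_{t-\tau}F(u(\tau))\|_{H_x^{s,r}}\lesssim(t-\tau)^{\alpha-1}\|F(u(\tau))\|_{H_x^{s,r}}$, and Lemma \ref{10.9Estimates of the nonlinearity Lemma1} with the a priori bounds gives $\|F(u(\tau))\|_{H_x^{s,r}}\lesssim\|u(\tau)\|_{L_x^\infty}^{p-1}\|u(\tau)\|_{H_x^{s,r}}\lesssim\tau^{-\gamma(p-1)}$. Hence $\|\mathcal{M}F(u)(t)\|_{H_x^{s,r}}\lesssim\int_0^t(t-\tau)^{\alpha-1}\tau^{-\gamma(p-1)}d\tau\sim t^{\alpha-\gamma(p-1)}$, a Beta integral that converges and leaves a positive power exactly because $\gamma(p-1)<\alpha<1$; thus this contribution is bounded and vanishes as $t\to0$. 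For the weighted $L_x^\infty$ bound I would run the same scheme, but feed the decay estimate a source exponent $\varrho\in[r,\infty]$ chosen by interpolating $H_x^{s,r}\hookrightarrow L_x^r$ against the weighted $L_x^\infty$ control, i.e. $\|u(\tau)\|_{L_x^\varrho}\lesssim\tau^{-\gamma(1-r/\varrho)}$. Estimating $\|F(u(\tau))\|_{L_x^\varrho}$ by Lemmas \ref{10.10 Main results theorem1 proof lemma2} and \ref{10.9Estimates of the nonlinearity Lemma1} and convolving against the kernel $(t-\tau)^{-\sigma n/\varrho+\alpha-1}$ again produces a Beta integral; the admissible window for $\varrho$ that makes both the $\tau=0$ singularity and the kernel singularity integrable, with a positive residual power after multiplication by $t^\gamma$, is guaranteed by $\gamma\in(\sigma n/q,\alpha/(p-1))$. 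This yields boundedness of $t^\gamma\|\mathcal{M}F(u)(t)\|_{L_x^\infty}$ and its vanishing as $t\to0$.

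The main obstacle is the time continuity of the Duhamel term, because — as the paper stresses — $P_t$ carries no semigroup structure, so the classical identity $P_{t_2-\tau}=P_{t_2-t_1}P_{t_1-\tau}$ is unavailable and continuity must be extracted directly from the estimates. To show $t\mapsto\mathcal{M}F(u)(t)$ continuous into $H_x^{s,r}$ at a point $t_0>0$ (and likewise $t^\gamma\mathcal{M}F(u)$ into $L_x^\infty$), I would write $\mathcal{M}F(u)(t)-\mathcal{M}F(u)(t_0)$, split the shared integral over $[0,t_0]$ from the remainder over the interval between $t_0$ and $t$, bound the remainder by the decay estimate so that it vanishes as $t\to t_0$, and treat the main part with the Hölder type estimate for $P$ in Proposition \ref{10.10 Holder type estimates}, which controls $\|(P_{t-\tau}-P_{t_0-\tau})F(u(\tau))\|_{H_x^{s,r}}$ by $|(t-\tau)^{\alpha-1}-(t_0-\tau)^{\alpha-1}|\,\|F(u(\tau))\|_{H_x^{s,r}}$. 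Crucially the $P$ estimate carries no $(t_1\wedge t_2)^{-1}$ prefactor, so for $\tau\in[0,t_0]$ this difference is dominated by $(t_0-\tau)^{\alpha-1}\tau^{-\gamma(p-1)}$, an integrable majorant (again by $\gamma(p-1)<\alpha<1$), and the dominated convergence theorem lets me pass to the limit; continuity at $t_0=0$ then reduces to the vanishing already established. Producing this integrable majorant and verifying dominated convergence uniformly is, I expect, the genuinely delicate point of the argument.
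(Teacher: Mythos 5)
The paper gives no proof of this lemma (it is dismissed with ``the proof is not difficult and we omit it''), so your proposal can only be judged against the estimates the paper actually uses elsewhere; your overall architecture -- splitting $\Phi u=S_tu_0+i\mathcal{M}F(u)$, the $H_x^{s,r}$ Duhamel bound via $\int_0^t(t-\tau)^{\alpha-1}\tau^{-\gamma(p-1)}d\tau\sim t^{\alpha-\gamma(p-1)}$, and the dominated-convergence argument for time continuity of $\mathcal{M}F(u)$ -- is exactly the route suggested by Step~1 of the paper's proof of Theorem \ref{20240515locla well-posendess}, and those parts are sound. The genuine gap is your weighted $L_x^\infty$ bound for the Duhamel term: you assert that an admissible interpolation exponent $\varrho\in[r,\infty]$ exists, ``guaranteed by $\gamma\in\left(\frac{\sigma n}{q},\frac{\alpha}{p-1}\right)$,'' but this is never verified and is false in general under the hypotheses of Theorem \ref{20240515locla well-posendess}. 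Writing $x=1/\varrho$, your scheme gives $\left\lVert F(u(\tau))\right\rVert_{L_x^\varrho}\lesssim\tau^{-\gamma p+\gamma rx}$ and the kernel $(t-\tau)^{\alpha-1-\sigma nx}$, so you need simultaneously (i) integrability at $\tau=0$: $x>\frac{\gamma p-1}{\gamma r}$ when $\gamma p>1$, and (ii) a nonnegative residual exponent: $\alpha-\gamma(p-1)-\left(\sigma n-\gamma r\right)x\geq0$. When $\gamma r<\sigma n$ these pull in opposite directions and are incompatible precisely when $\sigma n\left(\gamma p-1\right)\geq\gamma r\left(\alpha+\gamma-1\right)$. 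Concretely take $n=16$, $\beta=9$, $\alpha=0.9$ (so $\sigma n=0.8$), $p=3$, $s=0$, $r=1$, $q=2$, $\gamma=0.44\in\left(\frac{\sigma n}{q},\frac{\alpha}{p-1}\right)=(0.4,0.45)$: every hypothesis of Theorem \ref{20240515locla well-posendess} holds, yet (i) forces $x>\frac{8}{11}$ while (ii) forces $x\leq\frac{1}{18}$. In this regime your estimate for $t^\gamma\left\lVert\mathcal{M}F(u)(t)\right\rVert_{L_x^\infty}$ carries a strictly negative power of $t$, so neither boundedness near $t=0$ nor the required vanishing $\lim_{t\to0}t^\gamma\left\lVert\Phi u(t)\right\rVert_{L_x^\infty}=0$ follows. (Your scheme does work when $p=2$, where $\varrho=r$ suffices because $\sigma n/r\leq\sigma n<\alpha$, and whenever $\gamma p<1$, where $\varrho=\infty$ suffices; but the theorem permits $\gamma p>1$ as soon as $p<\frac{1}{1-\alpha}$.)

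You should also be aware that the obstruction is not an artifact of a clumsy choice on your part: with only the information $\left\lVert u(\tau)\right\rVert_{H_x^{s,r}}\lesssim1$ and $\left\lVert u(\tau)\right\rVert_{L_x^\infty}\lesssim\tau^{-\gamma}$ (and $s=0$, so no Sobolev gain), the decay estimates of Proposition \ref{9.24 decay estimates2} cannot do better, since the binding constraint sits at the $\tau\to0$ end of the Duhamel integral, where splitting the integral or re-choosing $\varrho$ region-by-region changes nothing. Indeed the paper itself appears to share this gap: its Step~1 bound for Theorem \ref{20240515locla well-posendess} contains the term $T^{\alpha-\frac{\sigma n}{r}-\gamma(p-2)}$ (your $\varrho=r$), whose exponent equals $-0.34$ for the parameters above, so the ``choose $T$ small'' argument breaks at the same point; evidently an unstated hypothesis such as $\gamma(p-2)<\alpha-\frac{\sigma n}{r}$ (automatic for $p=2$) is needed, and an honest version of your proof should add it, or restrict to $\gamma\leq\frac{1}{p}$, and say so explicitly. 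A secondary, smaller defect: your density argument for $S_tu_0\to u_0$ in $H_x^{s,r}$ as $t\to0$ fails at $r=\infty$, where Schwartz functions are not dense -- again a flaw shared with the statement itself, since the paper allows $1\leq r\leq\infty$.
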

\begin{proof}[Proof]
The proof is not difficult and we omit it.
\end{proof}
\begin{lemma}[{\cite[Lemma 7.1.2]{Geometric-Theory-of-Semilinear-Parabolic-Equations}}]
\label{10.27Gronwall inequality lemma}
Suppose $\beta>0$, $\gamma>0$, $\beta+\gamma>1$ and $a\geq0$, $b\geq0$, $u$ is nonnegative and $t^{\gamma-1}u(t)$ is locally integrable on $0\leq t<T$, and
\[ u(t)\leq a+b\int_0^t\left(t-\tau\right)^{\beta-1}\tau^{\gamma-1}u(\tau)d\tau \]
a.e. in $[0,T)$; then
\[ u(t)\leq a\mathbb{E}_{\beta,\gamma}\left(\left(b\Gamma(\beta)\right)^{\frac{1}{\nu}}t\right) \]
where $\nu=\beta+\gamma-1>0$, $\mathbb{E}_{\beta,\gamma}(t)=\sum\limits_{m=0}^\infty c_mt^{m\nu}$ with $c_0=1$, $\frac{c_{m+1}}{c_m}=\frac{\Gamma(m\nu+\gamma)}{\Gamma(m\nu+\gamma+\beta)}$ for $m\geq0$.
\end{lemma}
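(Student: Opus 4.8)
The plan is to read the hypothesis as a fixed-point iteration for the positivity-preserving integral operator
\[ \mathcal{K}v(t)=b\int_0^t\left(t-\tau\right)^{\beta-1}\tau^{\gamma-1}v(\tau)\,d\tau, \]
so that the assumption becomes $u\le a+\mathcal{K}u$ pointwise a.e. Because the kernel is nonnegative, $\mathcal{K}$ is monotone (if $0\le v\le w$ then $\mathcal{K}v\le\mathcal{K}w$) and linear; applying $\mathcal{K}$ repeatedly and reinserting $u\le a+\mathcal{K}u$ at each step, an induction on $n$ gives
\[ u(t)\le\sum_{k=0}^{n-1}\mathcal{K}^k a(t)+\mathcal{K}^n u(t)\qquad\text{a.e. on }[0,T). \]
Everything then reduces to two tasks: (i) evaluating and summing the iterates $\mathcal{K}^k a$ of the constant $a$, and (ii) showing that the remainder $\mathcal{K}^n u(t)\to 0$ as $n\to\infty$.

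For task (i) I would use the Beta integral $\int_0^t(t-\tau)^{\beta-1}\tau^{\gamma-1+\delta}\,d\tau=t^{\delta+\nu}\,\Gamma(\beta)\Gamma(\gamma+\delta)/\Gamma(\gamma+\delta+\beta)$, with $\nu=\beta+\gamma-1$, which shows that $\mathcal{K}$ sends the monomial $t^{\delta}$ to $b\Gamma(\beta)\,\frac{\Gamma(\gamma+\delta)}{\Gamma(\gamma+\delta+\beta)}\,t^{\delta+\nu}$. Starting from $\mathcal{K}^0a=a\,t^0$ and iterating with $\delta=k\nu$, one obtains $\mathcal{K}^k a=a\,(b\Gamma(\beta))^k c_k\,t^{k\nu}$, where the coefficients satisfy $c_0=1$ and $c_{k+1}/c_k=\Gamma(k\nu+\gamma)/\Gamma(k\nu+\gamma+\beta)$, i.e. exactly the recursion defining $\mathbb{E}_{\beta,\gamma}$ in the statement. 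Writing $(b\Gamma(\beta))^k t^{k\nu}=\big((b\Gamma(\beta))^{1/\nu}t\big)^{k\nu}$ and summing identifies $\sum_{k\ge0}\mathcal{K}^k a=a\,\mathbb{E}_{\beta,\gamma}\big((b\Gamma(\beta))^{1/\nu}t\big)$, the asserted bound. Convergence of this series for every $t$ (hence finiteness and continuity of the sum) follows from Stirling's formula, since $c_{k+1}/c_k\sim(k\nu)^{-\beta}\to0$ gives infinite radius of convergence.

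The \emph{main obstacle} is task (ii). The iterated kernel of $\mathcal{K}^n$ does not collapse to a clean power because of the intermediate weights $\tau^{\gamma-1}$, and crude pointwise bounds on those weights lose integrability precisely when $\gamma<1$. My plan is therefore to first upgrade $u$ to local boundedness and then compare: once $u\le U$ on a fixed $[0,T']$, monotonicity together with the exact computation of task (i) yields the clean bound $\mathcal{K}^n u\le U\,\mathcal{K}^n\mathbf{1}=U(b\Gamma(\beta))^n c_n\,t^{n\nu}$, whose right-hand side is the $n$-th term of the convergent series above and hence tends to $0$ uniformly on $[0,T']$. Letting $n\to\infty$ in the displayed iteration then gives $u\le a\,\mathbb{E}_{\beta,\gamma}\big((b\Gamma(\beta))^{1/\nu}t\big)$.

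It thus remains to justify the local boundedness of $u$. For $\gamma\ge1$ this is immediate: on $[0,T']$ one has $\tau^{\gamma-1}\le(T')^{\gamma-1}$, so $u$ obeys an inequality with the non-singular kernel $b(T')^{\gamma-1}(t-\tau)^{\beta-1}$, whose $n$-fold iterate is the clean expression $\frac{(b(T')^{\gamma-1}\Gamma(\beta))^n}{\Gamma(n\beta)}(t-\tau)^{n\beta-1}$; its remainder tends to $0$ directly from $t^{\gamma-1}u\in L^1_{\mathrm{loc}}$, giving $u\in L^\infty_{\mathrm{loc}}$. For $\gamma<1$ the weight is genuinely singular, and this is where the real work lies: here I would bootstrap the integrability exponent, using that $\mathcal{K}u=b\Gamma(\beta)\,J^\beta(\tau^{\gamma-1}u)$ is a Riemann--Liouville fractional integral of the $L^1_{\mathrm{loc}}$ function $\tau^{\gamma-1}u$, so that the Hardy--Littlewood--Sobolev smoothing of $J^\beta$ raises the Lebesgue exponent of $u$ by a fixed amount at each application; after finitely many steps $u$ lands in $L^\infty_{\mathrm{loc}}$ and one concludes as above. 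This bootstrap, rather than the algebraic identification of $\mathbb{E}_{\beta,\gamma}$, is the step I expect to demand the most care.
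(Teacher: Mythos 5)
The paper never proves this lemma---it is imported verbatim, with a citation, from Henry's book---so the only comparison available is with the standard iteration argument behind that citation. Your skeleton is exactly that argument, and the algebraic half of your write-up is correct and complete: the monotone iteration $u\le\sum_{k=0}^{n-1}\mathcal{K}^k a+\mathcal{K}^n u$, the Beta-integral computation giving $\mathcal{K}^k a=a\left(b\Gamma(\beta)\right)^k c_k t^{k\nu}$ with precisely the recursion defining $\mathbb{E}_{\beta,\gamma}$, and the convergence of the series. The genuine problems are in your treatment of the remainder $\mathcal{K}^n u$. First, for $\gamma>1$ your claim that the frozen-weight remainder vanishes ``directly from $t^{\gamma-1}u\in L^1_{\mathrm{loc}}$'' fails as written: that remainder is $\frac{\left(b(T')^{\gamma-1}\Gamma(\beta)\right)^n}{\Gamma(n\beta)}\int_0^t\left(t-\tau\right)^{n\beta-1}u(\tau)\,d\tau$, which requires $u\in L^1_{\mathrm{loc}}$, and for $\gamma>1$ this is \emph{not} implied by $t^{\gamma-1}u\in L^1_{\mathrm{loc}}$ (take $u(\tau)=\tau^{\varepsilon-\gamma}$ with $0<\varepsilon<\gamma-1$). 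The cure is to integrate the hypothesis itself: by Tonelli, $\int_0^t u\,d\sigma\le at+\frac{b}{\beta}t^{\beta}\int_0^t\tau^{\gamma-1}u(\tau)\,d\tau<\infty$, so $u\in L^1_{\mathrm{loc}}$ after all. Second, in the $\gamma<1$ bootstrap, Hardy--Littlewood--Sobolev is only of weak type at the $L^1$ endpoint, which is exactly where your bootstrap must start; on a bounded interval you should use Young's convolution inequality instead, with $(t-\tau)_+^{\beta-1}\in L^r$ for $r<\frac{1}{1-\beta}$ (and a trivial separate case when $\beta\ge1$), which still gains $\nu$ of integrability per cycle.

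Both patches work, but the whole detour through local boundedness of $u$ is unnecessary, and dropping it is what makes the lemma hold under the stated hypothesis alone. Write $\mathcal{K}^n u(t)=\int_0^t K_n(t,s)\,s^{\gamma-1}u(s)\,ds$ and estimate the iterated kernel directly, absorbing the intermediate weights via $\tau^{\gamma-1}\le(\tau-s)^{\gamma-1}$ when $\gamma<1$ and $\tau^{\gamma-1}\le t^{\gamma-1}$ when $\gamma\ge1$. The same Beta integral you already used then gives, by induction,
\[ K_n(t,s)\le C_n\left(t-s\right)^{(n-1)\nu+\beta-1},\qquad C_1=b,\quad C_{n+1}=b\Gamma(\beta)\frac{\Gamma(n\nu)}{\Gamma(n\nu+\beta)}C_n \]
in the case $\gamma<1$ (for $\gamma\ge1$ one gets the analogous bound $D_n t^{(n-1)(\gamma-1)}(t-s)^{n\beta-1}$ with $D_n$ governed by $\Gamma(n\beta)$). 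For large $n$ the exponent $(n-1)\nu+\beta-1$ is positive and $C_n t^{(n-1)\nu+\beta-1}\to0$ uniformly on compact subsets of $[0,T)$, since successive ratios behave like $b\Gamma(\beta)(n\nu)^{-\beta}t^{\nu}\to0$; hence the remainder dies using only the assumed local integrability of $t^{\gamma-1}u$---no boundedness of $u$, no case split, no HLS. This closes the proof with the algebra you already have, and is in substance the argument behind the cited lemma.
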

\begin{remark}
It's worth noting that
\[ \mathbb{E}_{\beta,\gamma}(t)\lesssim E_{\beta,\delta}\left(\left(\frac{\beta}{\nu}\right)^\beta t^\nu\right) \]
where $\delta=\frac{\beta\gamma+\nu}{2\nu}$.
\end{remark}

Before proving Theorem $\ref{20240515locla well-posendess}$, we shall prove a continuation result which will be helpful in proving the blow-up criteria. More precisely, we have the following lemma.
\begin{lemma}
\label{20240516continuation lemma}
Let $u\in\overline{\mathcal{X}_\gamma^{s,r}(T)}$ be the solution of $(\ref{9.5 space time fractional Schrodinger equation})$ provided in Theorem $\ref{20240515locla well-posendess}$ on $[0,T]$. Then $u$ can be extended to the interval $\left[0,\overline{T}\right]$ for some $\overline{T}>T$ uniquely such that the extended function is in the class $\overline{\mathcal{X}_\gamma^{s,r}\left(\overline{T}\right)}$ and also the solution of $(\ref{9.5 space time fractional Schrodinger equation})$ on $\left[0,\overline{T}\right]$.
\end{lemma}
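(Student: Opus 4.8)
The plan is to work directly with the integral formulation $(\ref{9.5 The equivalent integral equation equation1})$ on the larger interval rather than attempting to restart the Cauchy problem from the datum $u(T)$. The latter strategy fails precisely because, as the preceding remark records, the Caputo derivative is nonlocal in time and the equation is not invariant under the shift $t\mapsto t-T$: for $t>T$ the value $u(t)$ depends on the entire history $\{u(\tau):0\le\tau\le T\}$ through the memory kernel in $P_{t-\tau}$. The key observation is that this history is already \emph{known}, since $u$ is prescribed on $[0,T]$, so it can be frozen into a source term. Concretely, for $t\in[T,\overline{T}]$ I would split the Duhamel integral at $\tau=T$ and rewrite $(\ref{9.5 The equivalent integral equation equation1})$ as
\[ u(t)=w(t)+i\int_T^tP_{t-\tau}F(u(\tau))\,d\tau,\qquad w(t):=S_tu_0+i\int_0^TP_{t-\tau}F(u(\tau))\,d\tau, \]
where $w$ is completely determined because it involves only the known restriction $u|_{[0,T]}$. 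Thus the extension problem reduces to solving, for the unknown $v=u|_{[T,\overline{T}]}$, the fixed-point equation $v=\Psi v$ with $\Psi v(t)=w(t)+i\int_T^tP_{t-\tau}F(v(\tau))\,d\tau$ on $[T,\overline{T}]$.

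First I would verify that $w$ is an admissible forcing term, namely $w\in C([T,\overline{T}];H_x^{s,r})\cap C([T,\overline{T}];L_x^\infty)$ with $w(T)=u(T)$. The endpoint value is immediate: the $\int_T^t$ contribution vanishes at $t=T$, and the remaining expression is exactly $u(T)$ by $(\ref{9.5 The equivalent integral equation equation1})$; since $\Psi v(T)=w(T)=u(T)$ for every $v$, any fixed point automatically matches $u$ at $T$, which is what makes the two pieces glue continuously. For $t\in[T,\overline{T}]$ the kernel $P_{t-\tau}$ is evaluated at $t-\tau\ge t-T$, hence bounded away from its singularity at the origin except at the single instant $t=T$ (where it reproduces the already-convergent Duhamel term for $u$); continuity of $w$ in $t$ then follows from the H\"older type estimates of Proposition $\ref{10.10 Holder type estimates}$ applied to $S_tu_0$ and to $P_{t-\tau}F(u(\tau))$, together with the decay estimates of Proposition $\ref{9.24 decay estimates2}$ and dominated convergence, using the nonlinearity bound $\left\lVert F(u(\tau))\right\rVert_{H_x^{s,r}}\lesssim\left\lVert u(\tau)\right\rVert_{L_x^\infty}^{p-1}\left\lVert u(\tau)\right\rVert_{H_x^{s,r}}$ from Lemma $\ref{10.9Estimates of the nonlinearity Lemma1}$, which is finite and integrable over $[0,T]$ because $u\in\overline{\mathcal{X}_\gamma^{s,r}(T)}$. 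On $[T,\overline{T}]$ the weight $t^\gamma$ is comparable to a constant, so there is no distinction between the weighted and unweighted $L_x^\infty$ norms here and the singular behavior near $t=0$ plays no role.

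Then I would run a contraction argument for $\Psi$ on a closed ball centered at $w$ in $C([T,\overline{T}];H_x^{s,r})\cap C([T,\overline{T}];L_x^\infty)$, with $\overline{T}-T$ chosen small. The self-mapping and Lipschitz estimates for the integral operator follow from the decay estimates of Proposition $\ref{9.24 decay estimates2}$ for $P_{t-\tau}$ combined with the nonlinearity estimates of Lemmas $\ref{10.10 Main results theorem1 proof lemma2}$ and $\ref{10.249.24 Main results theorem1 proof lemma3}$, exactly as in the proof of Theorem $\ref{20240515locla well-posendess}$; the crucial point is that $\int_T^t(t-\tau)^{\alpha-1-\sigma n\left(\frac{1}{q}-\frac{1}{r}\right)}\,d\tau\lesssim(\overline{T}-T)^{\alpha-\sigma n\left(\frac{1}{q}-\frac{1}{r}\right)}$, a positive power of $\overline{T}-T$ under the standing hypotheses (here $\alpha-\sigma n\left(\frac{1}{q}-\frac{1}{r}\right)>0$ since $\sigma n/q<\alpha/(p-1)\le\alpha$), so the operator norm of the nonlinear part drops below $\tfrac12$ upon shrinking $\overline{T}-T$. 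The contraction mapping theorem then yields a unique fixed point $v$, and defining the extension to equal $u$ on $[0,T]$ and $v$ on $[T,\overline{T}]$ produces a function satisfying $(\ref{9.5 The equivalent integral equation equation1})$ on all of $[0,\overline{T}]$ (the split at $\tau=T$ recombines into the full Duhamel integral) and lying in $\overline{\mathcal{X}_\gamma^{s,r}(\overline{T})}$, the required behavior as $t\to0$ being inherited from $u$. Uniqueness of the extension within the class is then obtained by applying the Gronwall-type inequality of Lemma $\ref{10.27Gronwall inequality lemma}$ to the difference of two extensions, whose $H_x^{s,r}\cap L_x^\infty$ norm satisfies a singular integral inequality with zero forcing.

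The main obstacle is the treatment of the frozen history term $w$: one must show both that it is continuous up to and including $t=T$ (so that the two pieces glue into a genuine solution) and that its $H_x^{s,r}$ and $L_x^\infty$ norms stay controlled on $[T,\overline{T}]$, despite $P_{t-\tau}$ developing a singularity as $t\downarrow T$ and $\tau\uparrow T$ simultaneously. This is precisely where the non-autonomous, memory-dependent structure of the fractional equation is felt, and where the H\"older type estimates of Proposition $\ref{10.10 Holder type estimates}$, rather than any semigroup property, are indispensable.
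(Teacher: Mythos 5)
Your proposal is correct and is essentially the paper's own proof: the paper likewise obtains the extension by a contraction mapping argument on a short interval $\left[T,\overline{T}\right]$, working in the complete metric space $\Omega$ of functions $v\in\overline{\mathcal{X}_\gamma^{s,r}\left(\overline{T}\right)}$ satisfying $v\equiv u$ on $[0,T]$ and staying within distance $R$ of $u(T)$, a constraint that freezes the history exactly as your explicit splitting of the Duhamel integral at $\tau=T$ does, since for such $v$ the paper's map $\Phi v=S_tu_0+i\mathcal{M}F(v)$ coincides with your $\Psi\left(v|_{\left[T,\overline{T}\right]}\right)$. The self-map and contraction estimates rest on the same ingredients you cite (Propositions $\ref{9.24 decay estimates2}$ and $\ref{10.10 Holder type estimates}$ and Lemmas $\ref{10.10 Main results theorem1 proof lemma2}$ and $\ref{10.9Estimates of the nonlinearity Lemma1}$), so the two arguments differ only in bookkeeping, not in substance.
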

\begin{proof}[Proof]
Define a function space as
\[ \Omega:=\left\{v\in\overline{\mathcal{X}_\gamma^{s,r}\left(\overline{T}\right)}:\begin{gathered}
v\equiv u\;on\;[0,T]\\
\sup\limits_{t\in\left[T,\overline{T}\right]}\left\lVert v(t)-u(T)\right\rVert_{H_x^{s,r}}+\sup\limits_{t\in\left[T,\overline{T}\right]}\left\lVert t^\gamma v(t)-T^\gamma u(T)\right\rVert_{L_x^\infty}\leq R
\end{gathered}\right\}, \]
where $R$ satisfies $\left\lVert u\right\rVert_{L_T^\infty H_x^{s,r}}+\left\lVert t^\gamma u\right\rVert_{L_T^\infty L_x^\infty}\leq R$. $\Omega$ is a complete metric space equipped with the metric
\[ d_{\overline{T}}(v,w):=\left\lVert v-w\right\rVert_{L_{\overline{T}}^\infty L_x^r}+\left\lVert t^\gamma(v-w)\right\rVert_{L_T^\infty L_x^\infty}. \]

Let $\Phi v=S_tu_0+i\mathcal{M}F(v)$. For any $v\in\Omega$, $\Phi v\in\overline{\mathcal{X}_\gamma^{s,r}\left(\overline{T}\right)}$ by Lemma $\ref{20240516local wellposedness lemma}$ and $\Phi v\equiv\Phi u\equiv u$ on $[0,T]$ since $u$ is a solution to $(\ref{9.5 space time fractional Schrodinger equation})$ on $[0,T]$. On $\left[T,\overline{T}\right]$, we have, by Propositions $\ref{9.24 decay estimates2}$ and $\ref{10.10 Holder type estimates}$ and Lemmas $\ref{10.10 Main results theorem1 proof lemma2}$ and $\ref{10.9Estimates of the nonlinearity Lemma1}$, that
\begin{align*}
\sup\limits_{t\in\left[T,\overline{T}\right]}\left\lVert\Phi v(t)-u(T)\right\rVert_{H_x^{s,r}}&\lesssim T^{-1}\left(\overline{T}-T\right)R+\int_T^t\left(t-\tau\right)^{\alpha-1}\tau^{-\gamma(p-1)}d\tau R^p\\
&\lesssim T^{-1}\left(\overline{T}-T\right)R+T^{-\gamma(p-1)}\left(t-T\right)^\alpha R^p,
\end{align*}
and
\begin{align*}
\sup\limits_{t\in\left[T,\overline{T}\right]}\left\lVert t^\gamma\Phi v(t)-T^\gamma u(T)\right\rVert_{L_x^\infty}&\lesssim\left(T^{-\frac{\sigma n}{q}}\left(\overline{T}-T\right)^\gamma+T^{\gamma-1}\left(\overline{T}-T\right)^{1-\frac{\sigma n}{q}}\right)R\\
&+\left(T^{-\gamma}\overline{T}^{\alpha-\frac{\sigma n}{r}-\gamma(p-2)}\left(\overline{T}-T\right)^\gamma+T^{\gamma(1-p)}\left(\overline{T}-T\right)^\alpha\right)R^p,
\end{align*}
which then follows that
\[ \sup\limits_{t\in\left[T,\overline{T}\right]}\left\lVert\Phi v(t)-u(T)\right\rVert_{H_x^{s,r}}+\sup\limits_{t\in\left[T,\overline{T}\right]}\left\lVert t^\gamma\Phi v(t)-T^\gamma u(T)\right\rVert_{L_x^\infty}\leq R \]
by choosing $\overline{T}$ be close enough to $T$ and hence $\Phi$ maps $\Omega$ into $\Omega$. For any $v,w\in\Omega$, by Propositions $\ref{9.24 decay estimates2}$ and $\ref{10.10 Holder type estimates}$ and Lemma $\ref{10.10 Main results theorem1 proof lemma2}$ we conclude that
\[ d_{\overline{T}}\left(\Phi v,\Phi w\right)\lesssim\left(T^{\gamma(1-p)}+\overline{T}^\gamma T^{-\gamma p}\right)\left(\overline{T}-T\right)^\alpha R^{p-1}d_{\overline{T}}(v,w). \]

Hence also we can choose $\overline{T}$ be close enough to $T$ such that $\Phi$ is a contraction on $\Omega$. An application of contraction mapping theorem leads to the result.
\end{proof}
\begin{proof}[\textbf{Proof of Theorem} $\mathbf{\ref{20240515locla well-posendess}}$]
\textbf{Step 1:}Define a complete metric space $E_\gamma^{s,r}(T)$ by
\[ E_\gamma^{s,r}(T):=\left\{u\in\mathcal{X}_\gamma^{s,r}(T):\left\lVert u\right\rVert_{L_T^\infty H_x^{s,r}}+\left\lVert t^\gamma u\right\rVert_{L_T^\infty L_x^\infty}\leq\left\lVert u_0\right\rVert_{L_x^q\cap H_x^{s,r}}\right\} \]
with its metric
\[ d_T(u,v):=\left\lVert u-v\right\rVert_{L_T^\infty L_x^r}+\left\lVert t^\gamma(u-v)\right\rVert_{L_T^\infty L_x^\infty}. \]

$\Phi$ maps $\mathcal{X}_\gamma^{s,r}(T)$ into $\mathcal{X}_\gamma^{s,r}(T)$ by Lemma $\ref{20240516local wellposedness lemma}$ and
\begin{align*}
\left\lVert\Phi u\right\rVert_{L_T^\infty H_x^{s,r}}+\left\lVert t^\gamma\Phi u\right\rVert_{L_T^\infty L_x^\infty}\lesssim&\left(1+T^{\alpha-\gamma(p-1)}\right)\left\lVert u_0\right\rVert_{L_x^q\cap H_x^{s,r}}\\
&+\left(T^{\alpha-\gamma(p-1)}+T^{\alpha-\frac{\sigma n}{r}-\gamma(p-2)}\right)\left\lVert u_0\right\rVert_{L_x^q\cap H_x^{s,r}}^p
\end{align*}
by Proposition $\ref{9.24 decay estimates2}$ and Lemmas $\ref{10.10 Main results theorem1 proof lemma2}$ and $\ref{10.9Estimates of the nonlinearity Lemma1}$. Then we can choose $T$ sufficiently small such that $\Phi$ maps $E_\gamma^{s,r}(T)$ into $E_\gamma^{s,r}(T)$.

On the other hand, for any $u,v\in E_\gamma^{s,r}(T)$, we have, by Proposition $\ref{9.24 decay estimates2}$ and Lemma $\ref{10.10 Main results theorem1 proof lemma2}$, that
\[ d_T\left(\Phi u,\Phi v\right)\lesssim\left(T^{\alpha-\gamma(p-1)}+T^{\alpha-\frac{\sigma n}{r}-\gamma(p-2)}\right)\left\lVert u_0\right\rVert_{L_x^q\cap H_x^{s,r}}^{p-1}d_T(u,v). \]

Then we can choose $T$ sufficiently small such that $\Phi$ is contraction on $E_\gamma^{s,r}(T)$. By applying the contraction mapping theorem, we can obtain the existence and uniqueness of the local solution.

\textbf{Step 2:} Let $T_{\max}$ be the supremum of all $T>0$ for which there exists a solution of $(\ref{9.5 space time fractional Schrodinger equation})$ in $\overline{\mathcal{X}_\gamma^{s,r}(T)}$ under the hypotheses in Theorem $\ref{20240515locla well-posendess}$. Lemma $\ref{20240516continuation lemma}$ shows that $T_{\max}$ exists and $0<T_{\max}\leq\infty$ and
\[ u\in C\left(\left[0,T_{\max}\right);H_x^{s,r}\right),\quad t^\gamma u\in C\left(\left[0,T_{\max}\right);L_x^\infty\right). \]

Suppose that $T_{\max}<\infty$ but both $\left\lVert u\right\rVert_{H_x^{s,r}}$ and $t^\gamma\left\lVert u(t)\right\rVert_{L_x^\infty}$ are bounded on $\left[0,T_{\max}\right]$. A direct computation leads to
\[ u\in C\left(\left[0,T_{\max}\right];H_x^{s,r}\right),\quad t^\gamma u\in C\left(\left[0,T_{\max}\right];L_x^\infty\right), \]
and hence $u$ can be extended to $\left[0,T^*\right]$ for some $T^*>T_{\max}$ by Lemma $\ref{20240516continuation lemma}$ which contradicts to the definition of $T_{\max}$. Such arguments agree that $T_{\max}<\infty$ implies that $\lim\limits_{t\uparrow T_{\max}}\left\lVert u(t)\right\rVert_{H_x^{s,r}}=\infty$ or $\lim\limits_{t\uparrow T_{\max}}t^\gamma\left\lVert u(t)\right\rVert_{L_x^\infty}$.

\textbf{Step 3:} It remains to prove the continuous dependence. Let $u\in\overline{\mathcal{X}_\gamma^{s,r}(T)}$ be the solution of $(\ref{9.5 space time fractional Schrodinger equation})$ under the hypotheses in Theorem $\ref{20240515locla well-posendess}$ with the initial data $u_0\in L_x^q\cap H_x^{s,r}$ and $u_0^k\to u_0$ in $L_x^q\cap H_x^{s,r}$ as $k\to\infty$. Let $u_k$ be the solution of $(\ref{9.5 space time fractional Schrodinger equation})$ under the hypotheses in Theorem $\ref{20240515locla well-posendess}$ with the initial data $u_0^k$. It follows from the same arguments as in Step 1 and Step 2 that
\[ u_k\in C\left(\left[0,T_{\max}^k\right);H_x^{s,r}\right),\quad t^\gamma u\in C\left(\left[0,T_{\max}^k\right);L_x^\infty\right). \]

Let $M=\left\lVert u\right\rVert_{L_T^\infty H_x^{s,r}}+\left\lVert t^\gamma u\right\rVert_{L_T^\infty L_x^\infty}$ and define
\[ T_k:=\sup\left\{t\in\left[0,T_{\max}^k\right):\left\lVert u_k(s)\right\rVert_{H_x^{s,r}}+s^\gamma\left\lVert u_k(s)\right\rVert_{L_x^\infty}\leq M\;on\;[0,t]\right\}. \]

Then when $t\leq T_k\wedge T$, by Propositions $\ref{9.24 decay estimates2}$ and $\ref{10.10 Holder type estimates}$ and Lemmas $\ref{10.10 Main results theorem1 proof lemma2}$ and $\ref{10.249.24 Main results theorem1 proof lemma3}$, there holds
\begin{align*}
&\left\lVert u_k(t)-u(t)\right\rVert_{H_x^{s,r}}+t^\gamma\left\lVert u_k(t)-u(t)\right\rVert_{L_x^\infty}\\
&\lesssim\left(1+T^{\gamma-\frac{\sigma n}{q}}\right)\left\lVert u_0^k-u_0\right\rVert_{L_x^q\cap H_x^{s,r}}\\
&+\left(1+T^\gamma\right)M^{p-1}\int_0^t\left(t-\tau\right)^{\alpha-1}\tau^{-\gamma p}\left(\left\lVert u_k(\tau)-u(\tau)\right\rVert_{H_x^{s,r}}+\tau^\gamma\left\lVert u_k(\tau)-u(\tau)\right\rVert_{L_x^\infty}\right)d\tau.
\end{align*}

Then by Lemma $\ref{10.27Gronwall inequality lemma}$, we have
\begin{align*}
\left\lVert u_k(t)-u(t)\right\rVert_{H_x^{s,r}}+&t^\gamma\left\lVert u_k(t)-u(t)\right\rVert_{L_x^\infty}\\
&\lesssim\left(1+T^{\gamma-\frac{\sigma n}{q}}\right)\left\lVert u_0^k-u_0\right\rVert_{L_x^q\cap H_x^{s,r}}\mathbb{E}_{\alpha,1-\gamma p}\left(Ct\right),
\end{align*}
for some constant $C>0$. Then
\[ \left\lVert u_k(t)-u(t)\right\rVert_{H_x^{s,r}}+t^\gamma\left\lVert u_k(t)-u(t)\right\rVert_{L_x^\infty}\leq M \]
for $k$ large enough when $t\leq T_k\wedge T$. By Lemma $\ref{20240516continuation lemma}$, $T_k>T_k\wedge T$ and hence $T_k>T$. Then $T_{\max}^k>T$. It follows that $u_k$ exists in the same space of $u$ for $k$ large enough and $u_k\to u$ in $\overline{\mathcal{X}_\gamma^{s,r}(T)}$.
\end{proof}

Similarly, we can prove Theorem $\ref{20240518locla well-posendessII}$.
\section{Proof of Theorems $\ref{20240517global solution with small data II}$ and $\ref{20240517global solution with small data III}$}
\begin{proof}[\textbf{Proof of Theorem} $\mathbf{\ref{20240517global solution with small data II}}$]
Choose $\gamma=\frac{\alpha}{p-1}$. Define a function space
\[ \mathcal{Y}_\gamma^{s,r}:=\left\{u\in L_t^\infty H_x^{s,r},\;t^\gamma u\in L_t^\infty L_x^\infty:\left\lVert u\right\rVert_{L_t^\infty H_x^{s,r}}+\left\lVert t^\gamma u\right\rVert_{L_t^\infty L_x^\infty}\leq\left\lVert u_0\right\rVert_{L_x^{\frac{n(p-1)}{2\beta}}}+\left\lVert u_0\right\rVert_{H_x^{s,r}}\right\} \]
with its metric
\[ d_{\mathcal{Y}}(u,v)=\left\lVert u-v\right\rVert_{L_t^\infty L_x^r}+\left\lVert t^\gamma(u-v)\right\rVert_{L_t^\infty L_x^\infty},\quad u,v\in\mathcal{Y}_\gamma^{s,r}. \]

Let $\Phi u=S_tu_0+i\mathcal{M}F(u)$. It suffices to prove that $\Phi$ has a fixed point in $\mathcal{Y}_\gamma^{s,r}$. By Propositions $\ref{9.24 decay estimates2}$ and Lemmas $\ref{10.10 Main results theorem1 proof lemma2}$ and $\ref{10.9Estimates of the nonlinearity Lemma1}$, we have
\[ \left\lVert\Phi u\right\rVert_{H_x^{s,r}}+t^\gamma\left\lVert\Phi u\right\rVert_{L_x^\infty}\lesssim\delta+\delta^p. \]

$\Phi$ maps $\mathcal{Y}_\gamma^{s,r}$ into $\mathcal{Y}_\gamma^{s,r}$ since $\delta$ is sufficiently small. On the other hand, for any $u,v\in\mathcal{Y}_\gamma^{s,r}$, by Proposition $\ref{10.10 Holder type estimates}$ and Lemma $\ref{10.10 Main results theorem1 proof lemma2}$, we have
\[ d_{\mathcal{Y}}\left(\Phi u,\Phi v\right)\lesssim\delta^{p-1}d_{\mathcal{Y}}(u,v). \]

Then $\Phi$ is a contraction on $\mathcal{Y}_\gamma^{s,r}$ since $\delta$ sufficiently small. Applying the contraction mapping theorem we can complete the result.
\end{proof}

The proof of Theorem $\ref{20240517global solution with small data III}$ is similar and we omit it.
\section{Proof of Theorems $\ref{20240517global solution arbitrary data}$ and $\ref{20240519global solution arbitrary dataII}$}
\begin{proof}[\textbf{Proof of Theorem} $\mathbf{\ref{20240517global solution arbitrary data}}$]
It suffices to prove that $\left\lVert u(t)\right\rVert_{H_x^{s,r}}$ and $t^\gamma\left\lVert u(t)\right\rVert_{L_x^\infty}$ are bounded on every finite time interval $[0,T]$ by Theorem $\ref{20240515locla well-posendess}$.

\textbf{Step 1:} By Propositions $\ref{9.24 decay estimates2}$ and $\ref{10.10 Holder type estimates}$, Lemma $\ref{10.8A priori estimates2}$ and Sobolev's embedding, it follows that
\begin{align*}
t^\gamma\left\lVert u(t)\right\rVert_{L_x^\infty}&\lesssim t^{\gamma-\frac{\sigma n}{q}}\left\lVert u_0\right\rVert_{L_x^q}+t^\gamma\int_0^t\left(t-\tau\right)^{\alpha-\sigma n-1}\left\lVert u(\tau)\right\rVert_{L_x^p}^pd\tau\\
&\lesssim t^{\gamma-\frac{\sigma n}{q}}\left\lVert u_0\right\rVert_{L_x^\infty}+t^{\alpha-\sigma n+\gamma}E(u_0)^p,
\end{align*}
which bounds $t^\gamma\left\lVert u(t)\right\rVert_{L_x^\infty}$ on $[0,T]$.

\textbf{Step 2:} By Propositions $\ref{9.24 decay estimates2}$ and $\ref{10.10 Holder type estimates}$ and Lemma $\ref{10.9Estimates of the nonlinearity Lemma1}$, we have
\[ \left\lVert u(t)\right\rVert_{H_x^{s,r}}\lesssim\left\lVert u_0\right\rVert_{H_x^{s,r}}+\left\lVert t^\gamma u\right\rVert_{L_T^\infty L_x^\infty}^{p-1}\int_0^t\left(t-\tau\right)^{\alpha-1}\tau^{-\gamma(p-1)}\left\lVert u(\tau)\right\rVert_{H_x^{s,r}}d\tau. \]

Hence by Lemma $\ref{10.27Gronwall inequality lemma}$, it follows that
\[ \left\lVert u\right\rVert_{L_T^\infty H_x^{s,r}}\lesssim\left\lVert u_0\right\rVert_{H_x^{s,r}}\mathbb{E}_{\alpha,1-\gamma(p-1)}\left(CT\right) \]
for some constant $C>0$.
\end{proof}

Similarly, we can prove Theorem $\ref{20240519global solution arbitrary dataII}$.
\section{Conclusion}
It's well-known that space-time fractional Schr\"odinger equation displays a nonlocal behavior both in space and time and plays an important role in fractional quantum mechanics. However, the theoretical studies on it are still rudimentary. In this paper we present some new results on the local and global well-posedess of the space-time fractioal Schr\"odinger equation which are generalizations of previous work. To do this, we first acquire the decay estimates, H\"older type estimates of the evolution operators and a priori estimates which are elementary estimates in the research. After that we prove the results by some technical operations which, we believe, can also be used in the research for some other equations such as Navier-Stokes equation and Rayleigh-Stokes equation.
\section*{Data availability statement}
\noindent Data openly available in a public repository.
\section*{Declarations}
\noindent\textbf{Conflict of interest}\quad The authors declare that they have no conflict of interest.
\appendix
\section{The derivation of $(\ref{9.5 The equivalent integral equation equation1})$}
\label{20240521The derivation of integral equation}
In this section, we provide the derivation of the integral equation $(\ref{9.5 The equivalent integral equation equation1})$. Precisely, we will prove the following lemma.
\begin{lemma}
Let $u$ be the solution of $(\ref{9.5 space time fractional Schrodinger equation})$. Then $u$ satisfies $(\ref{9.5 The equivalent integral equation equation1})$.
\label{20240430equivalence}
\end{lemma}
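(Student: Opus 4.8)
The plan is to transform the problem into a family of scalar fractional ordinary differential equations, solve each one explicitly, and recognize the resulting formula as $(\ref{9.5 The equivalent integral equation equation1})$. Since $S_t$, $P_t$ and $\mathcal{M}$ are all defined through the spatial Fourier transform, it is natural to run the whole argument on the Fourier side and invert only at the very end.

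First I would apply $\mathscr{F}$ in the space variable to $(\ref{9.5 space time fractional Schrodinger equation})$. Because $(-\Delta)^\beta$ is the Fourier multiplier with symbol $|\xi|^{2\beta}$ and $\partial_t^\alpha$ acts only in time, the equation decouples in $\xi$ into
\[ \partial_t^\alpha\widehat{u}(\xi,t)=-i|\xi|^{2\beta}\widehat{u}(\xi,t)+i\widehat{F(u)}(\xi,t),\qquad\widehat{u}(\xi,0)=\widehat{u_0}(\xi). \]
Writing $\lambda=|\xi|^{2\beta}$ and regarding $g(t):=i\widehat{F(u)}(\xi,t)$ as a known forcing term — legitimate here because $u$ is assumed from the outset to be a solution — this is a linear inhomogeneous Caputo fractional ODE of order $\alpha$ with datum $\widehat{u_0}(\xi)$.

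Next I would solve this ODE by the Laplace transform in $t$, denoting by $\tilde{y}$ the transform of a function $y$. Using the Caputo rule $\mathcal{L}[\partial_t^\alpha y](s)=s^\alpha\tilde{y}(s)-s^{\alpha-1}y(0)$ gives
\[ \tilde{y}(s)=\frac{s^{\alpha-1}}{s^\alpha+i\lambda}\,\widehat{u_0}(\xi)+\frac{1}{s^\alpha+i\lambda}\,\tilde{g}(s). \]
Invoking the classical identities $\mathcal{L}^{-1}\!\left[\frac{s^{\alpha-1}}{s^\alpha+i\lambda}\right](t)=E_{\alpha,1}(-i\lambda t^\alpha)$ and $\mathcal{L}^{-1}\!\left[\frac{1}{s^\alpha+i\lambda}\right](t)=t^{\alpha-1}E_{\alpha,\alpha}(-i\lambda t^\alpha)$ together with the convolution theorem, inversion yields
\[ \widehat{u}(\xi,t)=a_t(\xi)\widehat{u_0}(\xi)+i\int_0^t b_{t-\tau}(\xi)\,\widehat{F(u)}(\xi,\tau)\,d\tau. \]
Applying $\mathscr{F}^{-1}$ and reading off the definitions of $S_t$, $P_t$ and $\mathcal{M}$ produces exactly $(\ref{9.5 The equivalent integral equation equation1})$.

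The reduction and the Laplace bookkeeping are routine; the delicate part is the rigorous justification of the transform calculus. One must check that $\widehat{u}(\xi,\cdot)$ and $\widehat{F(u)}(\xi,\cdot)$ are Laplace transformable and that the abscissae of convergence permit inversion. The subtlety is that $i\lambda$ is purely imaginary, so $E_{\alpha,1}(-i\lambda t^\alpha)$ does not decay exponentially; however, since $\alpha\in(0,1)$ the argument lies outside the sector $|\arg z|\le\alpha\pi/2$, so the relevant Mittag-Leffler functions grow at most algebraically and in fact decay, and every Laplace integral converges for $\mathrm{Re}(s)>0$, which makes the inversion legitimate. I would also confirm that $\mathscr{F}$ may be interchanged with $\partial_t^\alpha$ under the regularity the solution is assumed to possess, which follows from dominated convergence applied to the regularized integral defining the Caputo derivative. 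A fully rigorous route that bypasses these convergence questions is to verify $(\ref{9.5 The equivalent integral equation equation1})$ a posteriori: apply $\partial_t^\alpha$ to the right-hand side, use $\partial_t^\alpha a_t(\xi)=-i\lambda a_t(\xi)$ together with the fractional Duhamel identity $\partial_t^\alpha\!\int_0^t b_{t-\tau}(\xi)h(\tau)\,d\tau=-i\lambda\!\int_0^t b_{t-\tau}(\xi)h(\tau)\,d\tau+h(t)$, and read off that both the equation and the initial condition in $(\ref{9.5 space time fractional Schrodinger equation})$ are recovered.
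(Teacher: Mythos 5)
Your proposal is correct and follows essentially the same route as the paper: apply the spatial Fourier transform to decouple the equation into a scalar Caputo fractional ODE for $\widehat{u}(\xi,\cdot)$, express its solution through the Mittag-Leffler kernels $a_t(\xi)$ and $b_t(\xi)$, and invert the Fourier transform to read off $S_t$, $P_t$ and $\mathcal{M}$. The only difference is that the paper simply cites the theory of fractional ODEs (Proposition 5.10 of Kilbas--Srivastava--Trujillo) for the solution formula, whereas you re-derive that formula via the Laplace transform (with the correct decay discussion for $E_{\alpha,1}$ off the sector $\lvert\arg z\rvert\le\alpha\pi/2$), which amounts to inlining the proof of the result the paper invokes.
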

\begin{proof}[Proof]
Applying the Fourier transform to $(\ref{9.5 space time fractional Schrodinger equation})$, we can obtain
\begin{equation}
\label{20240430Fourier transform original equation}
\begin{cases}
i\partial_t^\alpha\widehat{u}-\left\lvert\xi\right\rvert^{2\beta}\widehat{u}+\left(F(u)\right)^\wedge,\quad&\xi\in\mathbb{R}^n,\;t>0,\\
\widehat{u}(0,\xi)=\widehat{u}_0(\xi),\quad&\xi\in\mathbb{R}^n.
\end{cases}
\end{equation}

By the theory of fractional ordinary differential equations{\cite[Proposition 5.10]{Theory-and-Applications-of-Fractional-Differential-Equations}}, the solution of $(\ref{20240430Fourier transform original equation})$ can be written as
\begin{equation}
\label{20240430Fourier transform original equation ODE}
\widehat{u}(t,\xi)=a_t(\xi)\widehat{u}_0(\xi)+i\int_0^tb_{t-\tau}(\xi)\left(F(u(\tau))\right)^\wedge d\tau.
\end{equation}

Then we can apply the inverse Fourier transform to $(\ref{20240430Fourier transform original equation ODE})$ to deduce $(\ref{9.5 The equivalent integral equation equation1})$.
\end{proof}
\begin{remark}
$(\ref{9.5 space time fractional Schrodinger equation})$ and $(\ref{9.5 The equivalent integral equation equation1})$ are equivalent in the distributional sense.
\end{remark}
\section{The derivation of $(\ref{20240503ML Mainardi function})$}
\label{20240503The derivation}
\begin{definition}[{\cite[(F.13)]{Fractional-Calculus-and-Waves-in-Linear-Viscoelasticity}}]
\label{20240503Definition Mainardi}
Let $0<\nu<1$, $z\in\mathbb{C}$. The Mainardi function $M_\nu(z)$ is given by
\[ M_\nu(z):=\sum\limits_{k=0}^\infty\frac{\left(-z\right)^k}{k!\Gamma\left(-\nu k+(1-\nu)\right)}. \]
\end{definition}
Recall that
\begin{equation}
\label{20240503Mainardi function integral}
\int_0^\infty\theta^\delta M_\nu(\theta)d\theta=\frac{\Gamma(\delta+1)}{\Gamma(\nu\delta+1)},\quad\delta>-1,\quad0\leq\nu<1.
\end{equation}
See {\cite[(F.33)]{Fractional-Calculus-and-Waves-in-Linear-Viscoelasticity}} for detailed proof. In addition, the following relationships between Mittag-Leffler function and Mainardi function are true:
\begin{equation}
\label{20240503relationship MF ML}
E_{\alpha,1}(-z)=\int_0^\infty M_\alpha(\theta)e^{-z\theta}d\theta,\quad E_{\alpha,\alpha}(-z)=\int_0^\infty\alpha\theta M_\alpha(\theta)e^{-z\theta}d\theta,\quad z\in\mathbb{C}.
\end{equation}
\begin{proof}[The derivation of $(\ref{20240503ML Mainardi function})$]
Using $(\ref{20240503relationship MF ML})$, we have
\begin{equation}
\label{20240503The derivation proof}
a_t(\xi)=\int_0^\infty M_\alpha(\theta)e^{-i\left\lvert\xi\right\rvert^{2\beta}t^\alpha\theta}d\theta,\quad b_t(\xi)=\int_0^\infty\alpha\theta M_\alpha(\theta)t^{\alpha-1}e^{-i\left\lvert\xi\right\rvert^{2\beta}t^\alpha\theta}d\theta.
\end{equation}

Using the fractional Schr\"odinger operator $e^{it\left(-\Delta\right)^\beta}$, i.e., $e^{it\left(-\Delta\right)^\beta}\phi=\mathscr{F}^{-1}\left(e^{it\left\lvert\xi\right\rvert^{2\beta}\widehat{\phi}}\right)$, we obtain $(\ref{20240503ML Mainardi function})$.
\end{proof}
\section{The derivation of $(\ref{9.23MLinequalities hold})$}
\label{20240502The derivation}
Define a piecewise function $\phi(x)$ as
\[ \phi(x)=\begin{cases}
e^{-\frac{1}{x^2}},\quad&x>0,\\
0,\quad&x\leq0.
\end{cases} \]
and truncated functions $\chi_1(t)$ and $\chi_1^c(t)$ as
\[ \chi_1(t)=\frac{\phi(2-t)}{\phi(2-t)+\phi(t-1)},\;\chi_1^c(t)=1-\chi_1(t),\quad\rm{for}\;t\geq0. \]

Note that $\chi_1(t)$ is smooth and $\chi_1(t)\equiv1$ for $0\leq t\leq1$ and $\chi_1(t)\equiv0$ for $t\geq2$. Then for a given large enough constant $M$, we can define the radial functions $\chi_t(\xi)$, $\chi_t^c(\xi)$ as $\chi_t(\xi):=\chi_1\left(\frac{t^\sigma\left\lvert\xi\right\rvert}{M}\right)$, $\chi_t^c(\xi):=\chi_1^c\left(\frac{t^\sigma\left\lvert\xi\right\rvert}{M}\right)$ respectively.
\begin{remark}
\label{20240502appendix truncated function bounded}
$\left\lvert\chi_1(t)\right\rvert\leq1$ and hence $\left\lvert\chi_t(\xi)\right\rvert,\left\lvert\chi_t^c(\xi)\right\rvert\leq1$. Additionally, $\left\lvert\chi_1'(t)\right\rvert$ is bounded where $\chi_1'(t)$ is the first derivative of $\chi_1(t)$ with respect to $t$.
\end{remark}
\begin{proof}[The derivation of $(\ref{9.23MLinequalities hold})$]
Define the following operators as
\begin{align*}
&\chi_t(D)\phi=\mathscr{F}^{-1}\left(\chi_t(\xi)\widehat{\phi}\right),\quad\chi_t^c(D)\phi=\mathscr{F}^{-1}\left(\chi_t^c(\xi)\widehat{\phi}\right)\\
&\left\lvert\nabla\right\rvert^\theta\phi=\mathscr{F}^{-1}\left(\left\lvert\xi\right\rvert^\theta\widehat{\phi}\right),\quad O\left(\left\lvert\nabla\right\rvert^\theta\right)\phi=\mathscr{F}^{-1}\left(O\left(\left\lvert\xi\right\rvert^\theta\right)\widehat{\phi}\right).
\end{align*}

Using the asymptotic expansion of the Mittag-Leffler function{\cite[Theorem 1.4]{Fractional-differential-equations-an-introduction-to-fractional-derivatives-fractional-differential-equations-to-methods-of-their-solution-and-some-of-their-applications}}, we can write $a_t(\xi)$, $b_t(\xi)$ as
\begin{align*}
&a_t(\xi)=-\frac{i}{\Gamma(1-\alpha)}t^{-\alpha}\left\lvert\xi\right\rvert^{-2\beta}+t^{-2\alpha}O\left(\left\lvert\xi\right\rvert^{-4\beta}\right),\\
&b_t(\xi)=\frac{1}{\Gamma(-\alpha)}t^{-\alpha-1}\left\lvert\xi\right\rvert^{-4\beta}+t^{-2\alpha-1}O\left(\left\lvert\xi\right\rvert^{-6\beta}\right),
\end{align*}
respectively, for $t^\sigma\left\lvert\xi\right\rvert$ large enough. Then for a given large enough constant $M$, we have
\begin{align*}
S_t\phi&=\mathscr{F}^{-1}\left(a_t(\xi)\widehat{\phi}\right)\\
&=\mathscr{F}^{-1}\left(a_t(\xi)\chi_t(\xi)\widehat{\phi}\right)+\mathscr{F}^{-1}\left(a_t(\xi)\chi_t^c(\xi)\widehat{\phi}\right)\\
&=\mathscr{F}^{-1}\left(a_t(\xi)\chi_t(\xi)\widehat{\phi}\right)-\frac{i}{\Gamma(1-\alpha)}t^{-\alpha}\mathscr{F}^{-1}\left(\left\lvert\xi\right\rvert^{-2\beta}\chi_t^c(\xi)\widehat{\phi}\right)+t^{-2\alpha}\mathscr{F}^{-1}\left(O\left(\left\lvert\xi\right\rvert^{-4\beta}\right)\chi_t^c(\xi)\widehat{\phi}\right)\\
&=S_t\chi_t(D)\phi-\frac{i}{\Gamma(1-\alpha)}t^{-\alpha}\left\lvert\nabla\right\rvert^{-2\beta}\chi_t^c(D)\phi+t^{-2\alpha}O\left(\left\lvert\nabla\right\rvert^{-4\beta}\right)\chi_t^c(D)\phi,
\end{align*}
and
\begin{align*}
P_t\phi&=\mathscr{F}^{-1}\left(b_t(\xi)\widehat{\phi}\right)\\
&=\mathscr{F}^{-1}\left(b_t(\xi)\chi_t(\xi)\widehat{\phi}\right)+\mathscr{F}^{-1}\left(b_t(\xi)\chi_t^c(\xi)\widehat{\phi}\right)\\
&=\mathscr{F}^{-1}\left(b_t(\xi)\chi_t(\xi)\widehat{\phi}\right)+\frac{1}{\Gamma(-\alpha)}t^{-\alpha-1}\mathscr{F}^{-1}\left(\left\lvert\xi\right\rvert^{-4\beta}\chi_t^c(\xi)\widehat{\phi}\right)+t^{-2\alpha-1}\mathscr{F}^{-1}\left(O\left(\left\lvert\xi\right\rvert^{-6\beta}\right)\chi_t^c(\xi)\widehat{\phi}\right)\\
&=P_t\chi_t(D)\phi+\frac{1}{\Gamma(-\alpha)}t^{-\alpha-1}\left\lvert\nabla\right\rvert^{-4\beta}\chi_t^c(D)\phi+t^{-2\alpha-1}O\left(\left\lvert\nabla\right\rvert^{-6\beta}\right)\chi_t^c(D)\phi.
\end{align*}
\end{proof}

Let $a(t)=E_{\alpha,1}(t)$, $b(t)=E_{\alpha,\alpha}(t)$, and
\begin{align*}
&K^\delta[a](x)=\int_{\mathbb{R}^n}e^{ix\cdot\xi}\left\lvert\xi\right\rvert^\delta a\left(-i\left\lvert\xi\right\rvert^{2\beta}\right)\chi_1\left(\frac{\left\lvert\xi\right\rvert}{M}\right)d\xi,\\
&K_\delta(x)=\int_{\mathbb{R}^n}e^{ix\cdot\xi}\left\lvert\xi\right\rvert^\delta\chi_1^c\left(\frac{\left\lvert\xi\right\rvert}{M}\right)d\xi.
\end{align*}
\begin{remark}
Note that by the aysymptotic expansion of the Mittag-Leffler function, $\left\lvert a(it)\right\rvert\lesssim t^{-1}$ and so does $b(it)$.
\end{remark}
\begin{remark}
It is easy to verify that $K[a](x)$ and $K[b](x)$ are bounded for $\theta\geq0$, and if $\delta<-n$, $K_\delta(x)$ is bounded.
\end{remark}
\begin{lemma}
\label{20240514KaKb estimate}
Let $0\leq\theta<2\beta-\frac{n+1}{2}$ for $\beta>\frac{n+1}{4}$. Then for any $\delta\geq0$, $K^{\theta+\delta}[a](x)$ has the following pointwise estimate
\[ \left\lvert K^{\theta+\delta}[a](x)\right\rvert\lesssim\left(1+\left\lvert x\right\rvert\right)^{-n-1}, \]
and so does $K^{\theta+\delta}[b](x)$.
\end{lemma}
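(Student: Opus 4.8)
The plan is to read $K^{\theta+\delta}[a]$ as a constant multiple of the (inverse) Fourier transform of the radial, compactly supported symbol $g(\xi)=\lvert\xi\rvert^{\theta+\delta}E_{\alpha,1}(-i\lvert\xi\rvert^{2\beta})\chi_1(\lvert\xi\rvert/M)$, and then to exploit radial symmetry to collapse the $n$-dimensional integral to a one-dimensional Hankel integral, which is where the Bessel function enters. Setting $G(\rho)=\rho^{\theta+\delta}E_{\alpha,1}(-i\rho^{2\beta})\chi_1(\rho/M)$, the standard representation of the Fourier transform of a radial function gives
\[ K^{\theta+\delta}[a](x)=c_n\lvert x\rvert^{1-\frac n2}\int_0^{2M}G(\rho)J_{\frac n2-1}(\lvert x\rvert\rho)\rho^{\frac n2}\,d\rho. \]
The structural facts I would record at the outset are that $E_{\alpha,1}$ is entire with $E_{\alpha,1}(0)=1$, so near the origin $G(\rho)=\rho^{\theta+\delta}\bigl(1+O(\rho^{2\beta})\bigr)$; that $G$ is smooth on $(0,\infty)$; and that $G$ is supported in $[0,2M]$ since $\chi_1(\rho/M)\equiv0$ for $\rho\ge2M$. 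Thus the amplitude has exactly one genuine singularity, of type $\rho^{\theta+\delta}$, at $\rho=0$.

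For bounded $\lvert x\rvert$, say $\lvert x\rvert\le1$, I would estimate the Hankel integral crudely: using $\lvert J_{\frac n2-1}(z)\rvert\lesssim z^{\frac n2-1}$ together with $G(\rho)\sim\rho^{\theta+\delta}$, the integrand is dominated by $\lvert x\rvert^{\frac n2-1}G(\rho)\rho^{n-1}$, which is integrable at the origin (because $\theta+\delta+n>0$) and compactly supported; combined with the prefactor $\lvert x\rvert^{1-\frac n2}$ this yields $\lvert K^{\theta+\delta}[a](x)\rvert\lesssim1\lesssim(1+\lvert x\rvert)^{-n-1}$ on this range. The real content of the lemma is therefore the decay for large $\lvert x\rvert$.

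For $\lvert x\rvert\ge1$ I would split the $\rho$-integral at $\rho=1/\lvert x\rvert$. On $0\le\rho\le1/\lvert x\rvert$ the Bessel argument is bounded, so the small-argument bound for $J_{\frac n2-1}$ again applies and controls this inner piece by a negative power of $\lvert x\rvert$ dictated by the origin exponent $\theta+\delta$. On $\rho\ge1/\lvert x\rvert$ the Bessel argument is large, and here I would substitute the asymptotic expansion of $J_{\frac n2-1}$, reducing the integral to oscillatory integrals of the form $\int G(\rho)\rho^{\frac{n-1}2}e^{\pm i\lvert x\rvert\rho}\,d\rho$ plus faster-decaying remainder terms. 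Repeated integration by parts in $\rho$—equivalently, repeated use of the recurrence $\frac{d}{dz}\bigl(z^{-\nu}J_\nu(z)\bigr)=-z^{-\nu}J_{\nu+1}(z)$—transfers derivatives onto $G$ and extracts one factor $\lvert x\rvert^{-1}$ at each step. The boundary contributions at $\rho=2M$ vanish because $G$ and all its derivatives vanish there (compact support), while the terms generated near $\rho=0$ are governed entirely by the $\rho^{\theta+\delta}$ singularity.

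The main obstacle is precisely this interaction with the origin: each integration by parts differentiates $G$ and so worsens the $\rho^{\theta+\delta}$ singularity, which limits how far one may push the oscillatory gain before the near-origin integrals stop improving, and one must verify that the remaining integrals converge and that the inner and outer pieces combine to the advertised rate $(1+\lvert x\rvert)^{-n-1}$. This is exactly where the hypotheses $\beta>\frac{n+1}{4}$ and $0\le\theta<2\beta-\frac{n+1}{2}$ enter: measured against the Bessel order $\frac n2-1$ and the $z^{-1/2}$ decay furnished by the Bessel asymptotics, they guarantee that the amplitude is regular enough for the oscillatory integrals to converge and for the two pieces to sum to $(1+\lvert x\rvert)^{-n-1}$. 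Finally, the estimate for $K^{\theta+\delta}[b]$ follows by the identical argument: $b=E_{\alpha,\alpha}$ is again entire with $b(-i\rho^{2\beta})$ analytic in $\rho^{2\beta}$ near the origin and $\lvert b(it)\rvert\lesssim t^{-1}$, so $\rho^{\theta+\delta}b(-i\rho^{2\beta})\chi_1(\rho/M)$ has the same $\rho^{\theta+\delta}$ singularity at the origin, the same compact support, and hence the same decay.
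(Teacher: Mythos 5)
Your reduction to a one-dimensional Bessel--Hankel integral and your treatment of $\lvert x\rvert\le 1$ coincide with the paper's starting point, but the concluding step of your large-$\lvert x\rvert$ analysis---the only place where the rate $-n-1$ is actually produced---is asserted rather than proved, and it fails. Your own inner-piece estimate already shows this: with $\lvert J_{\frac n2-1}(z)\rvert\lesssim z^{\frac n2-1}$ and $\lvert G(\rho)\rvert\lesssim\rho^{\theta+\delta}$ one gets
\[ \lvert x\rvert^{1-\frac n2}\int_0^{1/\lvert x\rvert}\rho^{\theta+\delta+\frac n2}\left(\rho\lvert x\rvert\right)^{\frac n2-1}d\rho\sim\lvert x\rvert^{-n-\theta-\delta}, \]
which is $\lesssim\lvert x\rvert^{-n-1}$ only if $\theta+\delta\ge1$; the hypotheses ($\delta\ge0$ arbitrary, $0\le\theta<2\beta-\frac{n+1}{2}$) impose no lower bound on $\theta+\delta$, e.g.\ $\theta=\frac12$, $\delta=0$ is admissible. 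Moreover this loss is not an artifact of crude estimation that more integrations by parts could recover: near $\rho=0$ one has $E_{\alpha,1}(-i\rho^{2\beta})=1+O(\rho^{2\beta})$, so the symbol equals $\lvert\xi\rvert^{\theta+\delta}\chi_1(\lvert\xi\rvert/M)$ plus a term with a strictly milder singularity, and for a fixed cutoff the Fourier transform of $\lvert\xi\rvert^{s}\chi_1(\lvert\xi\rvert/M)$ has the exact asymptotics $c_{n,s}\lvert x\rvert^{-n-s}+O(\lvert x\rvert^{-N})$ with $c_{n,s}\ne0$ whenever $s\notin2\mathbb{Z}_{\ge0}$. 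Hence for fixed $M$ and $0<\theta+\delta<1$ the kernel decays exactly like $\lvert x\rvert^{-n-\theta-\delta}$, and no completion of your oscillatory-integral scheme can yield $(1+\lvert x\rvert)^{-n-1}$: each integration by parts produces boundary terms at $\rho=1/\lvert x\rvert$ of precisely this size, which is the quantitative form of the obstacle you name and then dismiss with the phrase ``the hypotheses guarantee.''

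It is worth recording how the paper's own proof differs, because the divergence occurs exactly at this point. The paper never exploits oscillation: it splits at $\rho=1$, uses $\lvert J_{\frac n2-1}(z)\rvert\lesssim z^{-1/2}$ together with the Mittag-Leffler decay $\lvert E_{\alpha,1}(-i\rho^{2\beta})\rvert\lesssim\rho^{-2\beta}$ on $[1,2M]$, arrives at a bound of the form $\int_0^1 r^{\theta+n-1}dr+M^{\theta-2\beta+\frac{n+1}{2}}\lvert x\rvert^{\frac{1-n}{2}}$, and then takes $M>\lvert x\rvert^{\frac{n+3}{4\beta-2\theta-n-1}}$---that is, $M$ is chosen as a function of the point $x$ at which the kernel is evaluated, so the cutoff, and hence the kernel itself, changes with $x$ (and even then the paper's claim that the first, $M$-independent, term is $\lesssim M^{-\frac{(n+1)(4\beta-2\theta-n-1)}{n+3}}$ is unjustified). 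Your proposal keeps $M$ fixed, which is the honest reading of the statement and of how the lemma is used later (Young's inequality requires $L^s$ bounds for a single kernel), but under that reading the statement is false in the range $0<\theta+\delta<1$, so the gap in your write-up cannot be closed. What your method actually delivers, carried out carefully, is $\lvert K^{\theta+\delta}[a](x)\rvert\lesssim(1+\lvert x\rvert)^{-n-\min\{\theta+\delta,1\}}$ with $M$-dependent constants for $\theta+\delta>0$, and the stated rate only when $\theta+\delta\ge1$ or when $\theta+\delta=0$ (in the latter case after subtracting the smooth leading term $\chi_1(\lvert\xi\rvert/M)$, whose transform decays rapidly, leaving the $\lvert\xi\rvert^{2\beta}$ singularity with $2\beta>\frac{n+1}{2}\ge1$).
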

\begin{proof}[Proof]
By the Fourier transform of surface measure{\cite[B.4]{Classical-fourier-analysis}} and the asymptotic behavior of the Bessel function{\cite[B.6 and B.7]{Classical-fourier-analysis}}, we have
\begin{align*}
\left\lvert K^{\theta+\delta}[a](x)\right\rvert&=\left\lvert\int_0^\infty\int_{\left\lvert\xi\right\rvert=r}e^{ix\cdot\xi}r^{\theta+\delta} a\left(-ir^{2\beta}\right)\chi_1\left(\frac{r}{M}\right)d\xi dr\right\rvert\\
&=\left\lvert\int_0^\infty r^{\theta+\delta+\frac{n}{2}}a\left(-ir^{2\beta}\right)\chi_1\left(\frac{r}{M}\right)J_{\frac{n-2}{2}}\left(r\left\lvert x\right\rvert\right)dr\left\lvert x\right\rvert^{1-\frac{n}{2}}\right\rvert\\
&\lesssim\int_0^1r^{\theta+n-1}dr+\int_1^{2M}r^{\theta-2\beta+\frac{n-1}{2}}dr\left\lvert x\right\rvert^{\frac{1-n}{2}}\\
&\lesssim M^{-\frac{(n+1)(4\beta-2\theta-n-1)}{n+3}}+M^{\theta-2\beta+\frac{n+1}{2}}\left\lvert x\right\rvert^{\frac{1-n}{2}}.
\end{align*}

Taking $M$ large enough such that $M>\left\lvert x\right\rvert^{\frac{n+3}{4\beta-2\theta-n-1}}$, we have
\[ \left\lvert K[a](x)\right\rvert\lesssim\left\lvert x\right\rvert^{-n-1}. \]

This together with the boundedness of $K[a](x)$ completes the proof.
\end{proof}
\begin{lemma}
\label{20240514Kdelta estimate}
Let $\delta<-n$. $K_\delta(x)$ has the following pointwise estimate
\[ \left\lvert K_\delta(x)\right\rvert\lesssim\left(1+\left\lvert x\right\rvert\right)^{-n-1}. \]
\end{lemma}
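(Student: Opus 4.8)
The plan is to recognize $K_\delta$ as, up to a harmless constant, the inverse Fourier transform of the symbol $m(\xi):=\left\lvert\xi\right\rvert^\delta\chi_1^c\left(\frac{\left\lvert\xi\right\rvert}{M}\right)$ and to read off its pointwise decay directly from the smoothness and integrability of $m$. First I would record the two structural features of $m$ that make everything work. Since $\chi_1^c$ vanishes on a neighborhood of the origin, $m$ vanishes identically near $\xi=0$ and is smooth away from it, hence $m\in C^\infty(\mathbb{R}^n)$; and since $\delta<-n$, the tail $\left\lvert\xi\right\rvert^\delta$ is integrable at infinity, so $m\in L^1(\mathbb{R}^n)$. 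The latter, by the remark preceding the lemma, already gives $\left\lvert K_\delta(x)\right\rvert\lesssim1$, which disposes of the region $\left\lvert x\right\rvert\leq1$ where $(1+\left\lvert x\right\rvert)^{-n-1}\sim1$.

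For $\left\lvert x\right\rvert\geq1$ I would extract polynomial decay by integrating by parts against the Laplacian in $\xi$. Using $(-\Delta_\xi)^N e^{ix\cdot\xi}=\left\lvert x\right\rvert^{2N}e^{ix\cdot\xi}$ and transferring the derivatives onto $m$, the boundary terms vanish because $m$ and all its derivatives decay at infinity, so
\[ \left\lvert x\right\rvert^{2N}K_\delta(x)=\int_{\mathbb{R}^n}e^{ix\cdot\xi}(-\Delta_\xi)^N m(\xi)\,d\xi,\qquad\text{whence}\qquad\left\lvert x\right\rvert^{2N}\left\lvert K_\delta(x)\right\rvert\leq\left\lVert(-\Delta)^N m\right\rVert_{L_x^1}. \]
Then I would verify that the right-hand side is finite: $(-\Delta)^N m$ still vanishes near the origin, is bounded on the transition annulus $M\leq\left\lvert\xi\right\rvert\leq2M$ where $\chi_1^c$ is smooth, and satisfies $\left\lvert(-\Delta)^N m(\xi)\right\rvert\lesssim\left\lvert\xi\right\rvert^{\delta-2N}$ for $\left\lvert\xi\right\rvert\geq2M$, which is integrable at infinity precisely because $\delta-2N<-n$ holds for every $N\geq0$ once $\delta<-n$. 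Choosing $N=\left\lceil\frac{n+1}{2}\right\rceil$ guarantees $2N\geq n+1$, so that for $\left\lvert x\right\rvert\geq1$ we obtain $\left\lvert K_\delta(x)\right\rvert\lesssim\left\lvert x\right\rvert^{-2N}\leq\left\lvert x\right\rvert^{-n-1}$; combining this with the bound on $\left\lvert x\right\rvert\leq1$ yields $\left\lvert K_\delta(x)\right\rvert\lesssim(1+\left\lvert x\right\rvert)^{-n-1}$.

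The hard part is genuinely mild here, which is itself worth remarking: unlike Lemma $\ref{20240514KaKb estimate}$, where the Bessel asymptotics force the threshold decay $\left\lvert x\right\rvert^{-n-1}$ and one must tune $M$ in terms of $\left\lvert x\right\rvert$, the symbol of $K_\delta$ is smooth with all derivatives integrable, so the above in fact gives decay of arbitrary polynomial order and the stated $-n-1$ is simply the exponent needed downstream. The only points requiring care are the justification of the integration by parts (legitimate because $m$ is Schwartz-like in its decay even if not Schwartz) and the integrability of $(-\Delta)^N m$; I would note that the leading coefficient of $(-\Delta)^N\left\lvert\xi\right\rvert^\delta$ may degenerate for exceptional values of $\delta$, but this only improves the bound, so the $L_x^1$ estimate persists unconditionally. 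Alternatively, one could mirror the radial Bessel-function reduction of Lemma $\ref{20240514KaKb estimate}$, splitting the $r$-integral at $r\sim1$ and invoking the asymptotics of $J_{\frac{n-2}{2}}$, but the Laplacian integration-by-parts route is cleaner and makes the sharpness of the constraint $\delta<-n$ transparent.
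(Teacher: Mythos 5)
Your proof is correct, but it follows a genuinely different route from the paper's. The paper proves this lemma by the same mechanism as Lemma \ref{20240514KaKb estimate}: it reduces $K_\delta$ to a radial integral via the Fourier transform of surface measure, applies the large-argument asymptotics of the Bessel function $J_{\frac{n-2}{2}}$ to obtain $\lvert K_\delta(x)\rvert\lesssim M^{\delta+\frac{n-1}{2}}\lvert x\rvert^{\frac{1-n}{2}}$, and then upgrades this to $\lvert x\rvert^{-n-1}$ by taking the cutoff parameter $M$ larger than a power of $\lvert x\rvert$ (the printed condition $M>\frac{n+3}{1-n-2\delta}$ is evidently a typo for $M>\lvert x\rvert^{\frac{n+3}{1-n-2\delta}}$, in parallel with the proof of Lemma \ref{20240514KaKb estimate}), finally combining with the boundedness of $K_\delta$. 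You instead exploit that the symbol $\lvert\xi\rvert^\delta\chi_1^c\left(\frac{\lvert\xi\rvert}{M}\right)$ is smooth, vanishes near the origin, and behaves like a symbol of order $\delta<-n$ at infinity, and you run the standard non-stationary-phase integration by parts with $(-\Delta_\xi)^N$. Your route buys two things the paper's does not: the constant $M$ stays fixed (no coupling between $M$ and the evaluation point $x$, which is cleaner given that $M$ is fixed once and for all in the decomposition $(\ref{9.23MLinequalities hold})$, and that the scaling arguments in Lemmas \ref{20240505Decay estimates St second} and \ref{20240505Holder estimates St second} evaluate the kernel at $t^{-\sigma}x$ with $M$ fixed), and decay of arbitrary polynomial order $\lvert x\rvert^{-2N}$ rather than only the threshold $-n-1$. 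What the paper's route buys is uniformity of method: the identical Bessel computation also handles $K^{\theta+\delta}[a]$ and $K^{\theta+\delta}[b]$, whose symbols contain the oscillatory Mittag-Leffler factor and fail to be smooth at the origin, so your integration-by-parts argument does not transfer to them. Your handling of the two technical points --- vanishing of the boundary terms (derivatives of the symbol decay like $\lvert\xi\rvert^{\delta-k}$ with $\delta<-n$) and the $L^1$ bound on $(-\Delta)^N m$ --- is sound.
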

\begin{proof}[Proof]
Arguing similarly as the proof of Lemma $\ref{20240514KaKb estimate}$ we have
\begin{align*}
\left\lvert K_\delta(x)\right\rvert&\lesssim\int_M^\infty r^{\delta+\frac{n-1}{2}-1}dr\left\lvert x\right\rvert^{\frac{1-n}{2}}\\
&\lesssim M^{\delta+\frac{n-1}{2}}\left\lvert x\right\rvert^{\frac{1-n}{2}}.
\end{align*}

Taking $M$ large enough such that $M>\frac{n+3}{1-n-2\delta}$, we have
\[ \left\lvert K_\delta(x)\right\rvert\lesssim\left\lvert x\right\rvert^{-n-1}. \]

Due to the boundedness of $K_\delta(x)$, we can compelte the proof.
\end{proof}
\begin{lemma}
\label{20240505Decay estimates St first}
Let $0\leq\theta<2\beta-\frac{n+1}{2}$ for $\beta>\frac{n+1}{4}$ and $1\leq q\leq r\leq\infty$. Then
\begin{align*}
&\left\lVert\left\lvert\nabla\right\rvert^\theta S_t\chi_t(D)\phi\right\rVert_{L_x^r}\lesssim t^{-\sigma\theta-\sigma n\left(\frac{1}{q}-\frac{1}{r}\right)}\left\lVert\phi\right\rVert_{L_x^q},\\
&\left\lVert\left\lvert\nabla\right\rvert^\theta P_t\chi_t(D)\phi\right\rVert_{L_x^r}\lesssim t^{-\sigma\theta-\sigma n\left(\frac{1}{q}-\frac{1}{r}\right)+\alpha-1}\left\lVert\phi\right\rVert_{L_x^q}.
\end{align*}
\end{lemma}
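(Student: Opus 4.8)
The plan is to realize both operators as convolutions against explicit kernels, remove the $t$-dependence by a parabolic-type rescaling adapted to $\sigma=\frac{\alpha}{2\beta}$, and then close the estimate with Young's convolution inequality using the pointwise kernel bounds supplied by Lemma $\ref{20240514KaKb estimate}$. Since $|\nabla|^\theta S_t\chi_t(D)$ is a Fourier multiplier with symbol $|\xi|^\theta a_t(\xi)\chi_t(\xi)=|\xi|^\theta E_{\alpha,1}(-i|\xi|^{2\beta}t^\alpha)\chi_1(t^\sigma|\xi|/M)$, I would first write
\[ |\nabla|^\theta S_t\chi_t(D)\phi=G_t^\theta*\phi,\qquad G_t^\theta(x)=\mathscr{F}^{-1}\left(|\xi|^\theta a_t(\xi)\chi_t(\xi)\right)(x), \]
and likewise $|\nabla|^\theta P_t\chi_t(D)\phi=H_t^\theta*\phi$ with $H_t^\theta=\mathscr{F}^{-1}\left(|\xi|^\theta b_t(\xi)\chi_t(\xi)\right)$, where $b_t(\xi)=t^{\alpha-1}E_{\alpha,\alpha}(-i|\xi|^{2\beta}t^\alpha)$.

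The crucial step is the substitution $\eta=t^\sigma\xi$. Because $\sigma=\frac{\alpha}{2\beta}$ gives $2\sigma\beta=\alpha$, one has $|\xi|^{2\beta}t^\alpha=|\eta|^{2\beta}$ and $t^\sigma|\xi|=|\eta|$ simultaneously, so the $t$-dependence inside the Mittag-Leffler function and inside the cutoff both vanish after the change of variables. Tracking the Jacobian and the factor $|\xi|^\theta=t^{-\sigma\theta}|\eta|^\theta$, I expect to obtain
\[ G_t^\theta(x)=t^{-\sigma\theta-\sigma n}\,K^\theta[a]\!\left(t^{-\sigma}x\right),\qquad H_t^\theta(x)=t^{\alpha-1-\sigma\theta-\sigma n}\,K^\theta[b]\!\left(t^{-\sigma}x\right), \]
with $a(t)=E_{\alpha,1}(t)$, $b(t)=E_{\alpha,\alpha}(t)$ and $K^\theta$ the fixed, $t$-independent kernel defined just before Lemma $\ref{20240514KaKb estimate}$.

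Next I would read off the $L_x^m$ size of the kernels. Lemma $\ref{20240514KaKb estimate}$ with $\delta=0$ applies exactly under the present hypotheses $0\leq\theta<2\beta-\frac{n+1}{2}$, $\beta>\frac{n+1}{4}$ and gives $|K^\theta[a](x)|,|K^\theta[b](x)|\lesssim(1+|x|)^{-n-1}$; since $(1+|x|)^{-(n+1)}$ is bounded and integrable to any power $\geq1$, both kernels lie in $L_x^m$ for every $1\leq m\leq\infty$. Rescaling the $L_x^m$ norm through $y=t^{-\sigma}x$ then yields $\|G_t^\theta\|_{L_x^m}\lesssim t^{-\sigma\theta-\sigma n(1-\frac1m)}$ and $\|H_t^\theta\|_{L_x^m}\lesssim t^{\alpha-1-\sigma\theta-\sigma n(1-\frac1m)}$. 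Finally, choosing $m$ by $\frac1m=1-\frac1q+\frac1r$, so that $1-\frac1m=\frac1q-\frac1r$ and $1\leq m\leq\infty$ corresponds precisely to $q\leq r$, Young's inequality $\|G_t^\theta*\phi\|_{L_x^r}\leq\|G_t^\theta\|_{L_x^m}\|\phi\|_{L_x^q}$ (and its analogue for $H_t^\theta$) delivers the two claimed bounds.

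The genuine analytic difficulty, namely the pointwise decay of $K^\theta[a]$ and $K^\theta[b]$ obtained from the Bessel-function asymptotics, is already packaged into Lemma $\ref{20240514KaKb estimate}$; granting it, what remains is essentially bookkeeping. The one place demanding care is the cancellation in Step two: the powers of $t$ coming from the cutoff scale $t^\sigma$ and from the argument $t^\alpha$ of the Mittag-Leffler function must cancel exactly under $\eta=t^\sigma\xi$, and it is precisely the identity $\sigma=\frac{\alpha}{2\beta}$ that forces this. A sign or exponent slip here would corrupt the decay rate, so I would verify this scaling identity explicitly before assembling the final estimate.
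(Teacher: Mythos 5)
Your proposal is correct and follows essentially the same route as the paper: write each operator as convolution with the inverse Fourier transform of its symbol, rescale by $\eta=t^\sigma\xi$ (using $2\sigma\beta=\alpha$) to reduce to the fixed kernels $K^\theta[a]$, $K^\theta[b]$, invoke Lemma $\ref{20240514KaKb estimate}$ for the pointwise bound $(1+|x|)^{-n-1}$, and close with Young's inequality under $\frac{1}{r}+1=\frac{1}{s}+\frac{1}{q}$. The exponent bookkeeping, including the extra factor $t^{\alpha-1}$ in the $P_t$ case, matches the paper's computation exactly.
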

\begin{proof}[Proof]
By scaling, we have
\begin{align*}
\left\lvert\nabla\right\rvert^\theta S_t\chi_t(D)\phi&=\mathscr{F}^{-1}\left(\left\lvert\xi\right\rvert^\theta a_t(\xi)\chi_t(\xi)\right)*\phi\\
&\sim t^{-\sigma(\theta+n)}K[a]\left(t^{-\sigma}\cdot\right)*\phi.
\end{align*}

By Young's inequality{\cite[Lemma 1.4]{Fourier-Analysis-and-Nonlinear-Partial-Differential-Equations}} and Lemma $\ref{20240514KaKb estimate}$, we have
\begin{align*}
\left\lVert\left\lvert\nabla\right\rvert^\theta S_t\chi_t(D)\phi\right\rVert_{L_x^r}&\lesssim t^{-\sigma(\theta+n)}\left\lVert K[a]\left(t^{-\sigma}\cdot\right)\right\rVert_{L_x^s}\left\lVert\phi\right\rVert_{L_x^q},\\
&\lesssim t^{-\sigma\theta-\sigma n\left(\frac{1}{q}-\frac{1}{r}\right)}\left\lVert\phi\right\rVert_{L_x^q},
\end{align*}
where $\frac{1}{r}+1=\frac{1}{s}+\frac{1}{q}$.

The proof for $\left\lvert\nabla\right\rvert^\theta P_t\chi_t(D)\phi$ is similar.
\end{proof}
\begin{lemma}
\label{20240505Decay estimates St second}
Let $\theta<2\beta-n$ for $\beta>\frac{n}{2}$ and $1\leq q\leq r\leq\infty$. Then
\begin{align*}
&\left\lVert\left\lvert\nabla\right\rvert^{\theta-2\beta}\chi_t^c(D)\phi\right\rVert_{L_x^r}\lesssim t^{\alpha-\sigma\theta-\sigma n\left(\frac{1}{q}-\frac{1}{r}\right)}\left\lVert\phi\right\rVert_{L_x^q},\\
&\left\lVert\left\lvert\nabla\right\rvert^{\theta-4\beta}\chi_t^c(D)\phi\right\rVert_{L_x^r}\lesssim t^{2\alpha-\sigma\theta-\sigma n\left(\frac{1}{q}-\frac{1}{r}\right)}\left\lVert\phi\right\rVert_{L_x^q},
\end{align*}
\end{lemma}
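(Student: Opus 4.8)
The plan is to follow exactly the template of the proof of Lemma \ref{20240505Decay estimates St first}: realise each operator as convolution against a \emph{scaled} kernel, apply Young's inequality, and then track the powers of $t$ produced by the scaling. Write $\delta=\theta-2\beta$ for the first estimate and $\delta=\theta-4\beta$ for the second. The Fourier multiplier of $\left\lvert\nabla\right\rvert^{\theta-2\beta}\chi_t^c(D)$ is $\left\lvert\xi\right\rvert^\delta\chi_1^c\!\left(\frac{t^\sigma\left\lvert\xi\right\rvert}{M}\right)$, so the associated convolution kernel is $\mathscr{F}^{-1}\!\left(\left\lvert\xi\right\rvert^\delta\chi_t^c(\xi)\right)$. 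Substituting $\eta=t^\sigma\xi$ in this inverse transform yields the clean identity $\mathscr{F}^{-1}\!\left(\left\lvert\xi\right\rvert^\delta\chi_t^c(\xi)\right)(x)=t^{-\sigma\delta-\sigma n}K_\delta(t^{-\sigma}x)$, which reduces everything to the fixed kernel $K_\delta$ already studied in Lemma \ref{20240514Kdelta estimate}.

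First I would insert this identity and apply Young's inequality \cite[Lemma 1.4]{Fourier-Analysis-and-Nonlinear-Partial-Differential-Equations} with the H\"older relation $\frac{1}{r}+1=\frac{1}{s}+\frac{1}{q}$, giving
\[ \left\lVert\left\lvert\nabla\right\rvert^{\theta-2\beta}\chi_t^c(D)\phi\right\rVert_{L_x^r}\lesssim t^{-\sigma\delta-\sigma n}\left\lVert K_\delta(t^{-\sigma}\cdot)\right\rVert_{L_x^s}\left\lVert\phi\right\rVert_{L_x^q}. \]
A further change of variables gives the homogeneity $\left\lVert K_\delta(t^{-\sigma}\cdot)\right\rVert_{L_x^s}=t^{\sigma n/s}\left\lVert K_\delta\right\rVert_{L_x^s}$, so the whole bound hinges on the finiteness of $\left\lVert K_\delta\right\rVert_{L_x^s}$ for every $1\leq s\leq\infty$. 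This is precisely where Lemma \ref{20240514Kdelta estimate} enters: its hypothesis is $\delta<-n$, and the present hypothesis $\theta<2\beta-n$ forces $\delta=\theta-2\beta<-n$ (and a fortiori $\theta-4\beta<-n$), so the pointwise bound $\left\lvert K_\delta(x)\right\rvert\lesssim(1+\left\lvert x\right\rvert)^{-n-1}$ applies, and this decay lies in $L_x^s$ for all $s\geq1$.

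Finally I would collect the powers of $t$. Using $1-\frac{1}{s}=\frac{1}{q}-\frac{1}{r}$, the exponent reduces to $-\sigma\delta-\sigma n\!\left(\frac{1}{q}-\frac{1}{r}\right)$, and the key simplification is $-\sigma\delta=-\sigma\theta+2\sigma\beta=\alpha-\sigma\theta$, since $\sigma=\frac{\alpha}{2\beta}$ gives $2\sigma\beta=\alpha$. This reproduces the first exponent $\alpha-\sigma\theta-\sigma n\!\left(\frac{1}{q}-\frac{1}{r}\right)$. For the second estimate the identical argument with $\delta=\theta-4\beta$ produces $-\sigma\delta=-\sigma\theta+4\sigma\beta=2\alpha-\sigma\theta$, yielding $2\alpha-\sigma\theta-\sigma n\!\left(\frac{1}{q}-\frac{1}{r}\right)$.

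Since the scaling identity, Young's inequality, and the kernel bound are all either elementary or already established, there is no genuine analytic obstacle here. The only points requiring care are matching the hypothesis $\theta<2\beta-n$ to the condition $\delta<-n$ of Lemma \ref{20240514Kdelta estimate} for \emph{both} shifts, and the exponent arithmetic $2\sigma\beta=\alpha$, $4\sigma\beta=2\alpha$ that converts the scaling exponent into the stated time decay.
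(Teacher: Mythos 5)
Your proposal is correct and follows essentially the same route as the paper's own (very terse) proof: the scaling identity $\mathscr{F}^{-1}\left(\left\lvert\xi\right\rvert^\delta\chi_t^c(\xi)\right)(x)\sim t^{-\sigma\delta-\sigma n}K_\delta\left(t^{-\sigma}x\right)$, Young's inequality with $\frac{1}{r}+1=\frac{1}{s}+\frac{1}{q}$, and the pointwise bound of Lemma \ref{20240514Kdelta estimate} applied to $\delta=\theta-2\beta$ and $\delta=\theta-4\beta$, both of which satisfy $\delta<-n$ under the hypothesis $\theta<2\beta-n$. Your explicit bookkeeping of the exponents via $2\sigma\beta=\alpha$ and $4\sigma\beta=2\alpha$ is exactly what the paper's phrase ``by scaling'' compresses.
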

\begin{proof}[Proof]
By scaling, we have
\begin{align*}
&\left\lvert\nabla\right\rvert^{\theta-2\beta}\chi_t^c(D)\phi\sim t^{\alpha-\sigma(\theta+n)}K_{\theta-2\beta}\left(t^{-\sigma}\cdot\right)*\phi,\\
&\left\lvert\nabla\right\rvert^{\theta-4\beta}\chi_t^c(D)\phi\sim t^{2\alpha-\sigma(\theta+n)}K_{\theta-4\beta}\left(t^{-\sigma}\cdot\right)*\phi.
\end{align*}

By Young's inequality and Lemma $\ref{20240514Kdelta estimate}$, we can complete the proof.
\end{proof}

Similar to Lemma $\ref{20240505Decay estimates St second}$, we have the following lemma.
\begin{lemma}
\label{20240505Decay estimates St third}
Let $\theta<4\beta-n$ for $\beta>\frac{n}{4}$ and $1\leq q\leq r\leq\infty$. Then
\begin{align*}
&\left\lVert\left\lvert\nabla\right\rvert^\theta O\left(\left\lvert\nabla\right\rvert^{-4\beta}\right)\chi_t^c(D)\phi\right\rVert_{L_x^r}\lesssim t^{2\alpha-\sigma\theta-\sigma n\left(\frac{1}{q}-\frac{1}{r}\right)}\left\lVert\phi\right\rVert_{L_x^q},\\
&\left\lVert\left\lvert\nabla\right\rvert^\theta O\left(\left\lvert\nabla\right\rvert^{-6\beta}\right)\chi_t^c(D)\phi\right\rVert_{L_x^r}\lesssim t^{3\alpha-\sigma\theta-\sigma n\left(\frac{1}{q}-\frac{1}{r}\right)}\left\lVert\phi\right\rVert_{L_x^q}.
\end{align*}
\end{lemma}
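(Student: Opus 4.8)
The plan is to mirror the proof of Lemma~\ref{20240505Decay estimates St second}: reduce each operator to a scaled convolution kernel, bound that kernel pointwise exactly as in Lemma~\ref{20240514Kdelta estimate}, and then close with Young's inequality. The only genuinely new feature is that the Fourier symbols $O\left(\left\lvert\xi\right\rvert^{-4\beta}\right)$ and $O\left(\left\lvert\xi\right\rvert^{-6\beta}\right)$ are no longer exact powers, so I would first record that they are radial and, on the support of $\chi_t^c$ (where $t^\sigma\left\lvert\xi\right\rvert\geq M$), dominated in modulus by $\left\lvert\xi\right\rvert^{-4\beta}$ and $\left\lvert\xi\right\rvert^{-6\beta}$ respectively; this is precisely the content of the asymptotic expansion used to derive $(\ref{9.23MLinequalities hold})$ in Appendix~\ref{20240502The derivation}.

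First I would compute the kernels by scaling. Writing $\left\lvert\nabla\right\rvert^\theta O\left(\left\lvert\nabla\right\rvert^{-4\beta}\right)\chi_t^c(D)$ as a Fourier multiplier and substituting $\eta=t^\sigma\xi$, I expect to obtain
\[ \left\lvert\nabla\right\rvert^\theta O\left(\left\lvert\nabla\right\rvert^{-4\beta}\right)\chi_t^c(D)\phi\sim t^{2\alpha-\sigma(\theta+n)}\widetilde{K}_1\left(t^{-\sigma}\cdot\right)*\phi, \]
and likewise
\[ \left\lvert\nabla\right\rvert^\theta O\left(\left\lvert\nabla\right\rvert^{-6\beta}\right)\chi_t^c(D)\phi\sim t^{3\alpha-\sigma(\theta+n)}\widetilde{K}_2\left(t^{-\sigma}\cdot\right)*\phi, \]
where $\widetilde{K}_1,\widetilde{K}_2$ are the convolution kernels with radial symbols $\left\lvert\eta\right\rvert^\theta O\left(\left\lvert\eta\right\rvert^{-4\beta}\right)\chi_1^c(\left\lvert\eta\right\rvert/M)$ and $\left\lvert\eta\right\rvert^\theta O\left(\left\lvert\eta\right\rvert^{-6\beta}\right)\chi_1^c(\left\lvert\eta\right\rvert/M)$. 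The prefactor exponents $2\alpha$ and $3\alpha$ are forced by $4\sigma\beta=2\alpha$ and $6\sigma\beta=3\alpha$, exactly as the exponents $\alpha$ and $2\alpha$ arose for the $\left\lvert\nabla\right\rvert^{\theta-2\beta}$ and $\left\lvert\nabla\right\rvert^{\theta-4\beta}$ terms in Lemma~\ref{20240505Decay estimates St second}.

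Next I would establish the pointwise bounds $\left\lvert\widetilde{K}_i(x)\right\rvert\lesssim(1+\left\lvert x\right\rvert)^{-n-1}$. Since the two symbols above are radial and dominated in modulus by $\left\lvert\eta\right\rvert^{\theta-4\beta}$ and $\left\lvert\eta\right\rvert^{\theta-6\beta}$ on $\{\left\lvert\eta\right\rvert\geq M\}$, the Hankel--transform reduction and Bessel asymptotics used in the proof of Lemma~\ref{20240514Kdelta estimate} apply verbatim. The decay then follows provided $\theta-4\beta<-n$ and $\theta-6\beta<-n$: the hypothesis $\theta<4\beta-n$ gives the first directly, and the second is automatic since $4\beta-n<6\beta-n$. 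This also explains why the binding constraint is $\theta<4\beta-n$ and why $\beta>\frac{n}{4}$ (ensuring the admissible range of $\theta$ is nonempty) is the natural assumption.

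Finally I would close with Young's inequality as in Lemma~\ref{20240505Decay estimates St second}. With $\frac{1}{r}+1=\frac{1}{s}+\frac{1}{q}$ and $\left\lVert\widetilde{K}_i(t^{-\sigma}\cdot)\right\rVert_{L_x^s}=t^{\sigma n/s}\left\lVert\widetilde{K}_i\right\rVert_{L_x^s}$, the pointwise bound yields $\left\lVert\widetilde{K}_i\right\rVert_{L_x^s}<\infty$ for every $s\geq1$ (as $(1+\left\lvert x\right\rvert)^{-n-1}\in L_x^s$), and combining $t^{\sigma n/s}=t^{\sigma n-\sigma n(\frac{1}{q}-\frac{1}{r})}$ with the prefactors $t^{2\alpha-\sigma(\theta+n)}$ and $t^{3\alpha-\sigma(\theta+n)}$ produces the claimed rates. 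The one point needing care---the main obstacle---is transferring Lemma~\ref{20240514Kdelta estimate} from an exact power symbol to the $O(\cdot)$ symbols; I would handle it by checking that each step of that proof (the passage to the Bessel integral, the split into $\int_0^M$ controlled by the symbol's size and $\int_M^\infty$ controlled by $|J_\nu(s)|\lesssim s^{-1/2}$, and the $|x|$-dependent choice of $M$) uses only the radiality and the modulus bound that the $O(\cdot)$ symbols share with pure powers.
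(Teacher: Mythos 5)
Your proposal is correct and follows exactly the route the paper intends: the paper gives no separate proof for this lemma, stating only that it is ``similar to Lemma~\ref{20240505Decay estimates St second}'', whose proof is precisely your scheme of scaling to a convolution kernel, the pointwise bound of Lemma~\ref{20240514Kdelta estimate}, and Young's inequality. Your extra care in checking that the radiality and modulus bound of the $O(\cdot)$ symbols (which, being functions of the scale-invariant quantity $t^\alpha\left\lvert\xi\right\rvert^{2\beta}$, rescale to $t$-independent kernels) suffice for that kernel argument, and that $\theta<4\beta-n$ is the binding constraint, fills in details the paper leaves implicit.
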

\begin{lemma}
\label{20240505Holder estimates St first}
Let $0\leq\theta<2\beta-n$ for $\beta>\frac{n}{2}$ and $1\leq q\leq r\leq\infty$. Then for any $t_1,t_2>0$, we have
\begin{align*}
&\begin{aligned}
\left\lVert\left\lvert\nabla\right\rvert^\theta\left(S_{t_1}\chi_{t_1}(D)-\right.\right.&\left.\left.S_{t_2}\chi_{t_2}(D)\right)\phi\right\rVert_{L_x^r}\\
&\lesssim\left(t_1\wedge t_2\right)^{-1}\left\lvert t_1^{1-\sigma\theta-\sigma n\left(\frac{1}{q}-\frac{1}{r}\right)}-t_2^{1-\sigma\theta-\sigma n\left(\frac{1}{q}-\frac{1}{r}\right)}\right\rvert\left\lVert\phi\right\rVert_{L_x^q},
\end{aligned}\\
&\begin{aligned}
\left\lVert\left\lvert\nabla\right\rvert^\theta\left(P_{t_1}\chi_{t_1}(D)-\right.\right.&\left.\left.P_{t_2}\chi_{t_2}(D)\right)\phi\right\rVert_{L_x^r}\\
&\lesssim\left\lvert t_1^{\alpha-1-\sigma\theta-\sigma n\left(\frac{1}{q}-\frac{1}{r}\right)}-t_2^{\alpha-1-\sigma\theta-\sigma n\left(\frac{1}{q}-\frac{1}{r}\right)}\right\rvert\left\lVert\phi\right\rVert_{L_x^q}.
\end{aligned}
\end{align*}
\end{lemma}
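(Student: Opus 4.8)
The plan is to reduce the two Hölder bounds to the decay estimates already obtained for the low-frequency operators in Lemma~\ref{20240505Decay estimates St first}, by differentiating in the time variable and integrating back. Write $\kappa:=\sigma\theta+\sigma n\left(\frac1q-\frac1r\right)\ge0$ for the exponent governing the decay of $S_t\chi_t(D)$ and $P_t\chi_t(D)$ (so the $P_t$ decay exponent is $\alpha-1-\kappa$). Assuming without loss of generality that $t_2\le t_1$, the fundamental theorem of calculus gives
\[ \left(S_{t_1}\chi_{t_1}(D)-S_{t_2}\chi_{t_2}(D)\right)\phi=\int_{t_2}^{t_1}\partial_t\left(S_t\chi_t(D)\phi\right)dt, \]
and likewise for $P_t$; by Minkowski's integral inequality the problem collapses to bounding the $L_x^q\to L_x^r$ operator norm of the $t$-derivatives of the low-frequency operators and then integrating the resulting power of $t$.

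To control these derivatives I would differentiate at the symbol level, using the Mittag-Leffler derivative identities
\[ \partial_ta_t(\xi)=-i\left\lvert\xi\right\rvert^{2\beta}b_t(\xi),\qquad \partial_tb_t(\xi)=t^{\alpha-2}E_{\alpha,\alpha-1}\left(-i\left\lvert\xi\right\rvert^{2\beta}t^\alpha\right), \]
which keep the differentiated multipliers inside the same family of Mittag-Leffler symbols. Each derivative splits into a term where $\partial_t$ hits $a_t$ (resp. $b_t$) and a term where it hits the cutoff $\chi_t(\xi)=\chi_1(t^\sigma\left\lvert\xi\right\rvert/M)$; the latter produces a factor $\chi_1'(t^\sigma\left\lvert\xi\right\rvert/M)$ supported in the annulus $\left\lvert\xi\right\rvert\sim Mt^{-\sigma}$, whose kernel is the Fourier transform of a smooth compactly supported function and so decays faster than any polynomial. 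Applying the parabolic scaling $\xi=t^{-\sigma}\eta$ to each piece and invoking Lemma~\ref{20240514KaKb estimate}—for the $\partial_ta_t$ contribution at the raised order $\theta+2\beta$, which is admissible since the base index still satisfies $\theta<2\beta-n\le2\beta-\frac{n+1}{2}$—exactly as in Lemma~\ref{20240505Decay estimates St first}, together with Young's inequality, I expect
\[ \left\lVert\partial_t\left(S_t\chi_t(D)\right)\right\rVert_{L_x^q\to L_x^r}\lesssim t^{-1-\kappa},\qquad \left\lVert\partial_t\left(P_t\chi_t(D)\right)\right\rVert_{L_x^q\to L_x^r}\lesssim t^{\alpha-2-\kappa}. \]
The kernel bound of Lemma~\ref{20240514KaKb estimate} applies verbatim to $E_{\alpha,\alpha-1}$, as its proof uses only the universal $\left\lvert z\right\rvert^{-1}$ large-argument decay of the Mittag-Leffler function.

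Finally I integrate in $t$. For $P_t$ the exponent satisfies $\alpha-2-\kappa<-1$ with $\left\lvert\alpha-1-\kappa\right\rvert\ge1-\alpha>0$, so $\int_{t_2}^{t_1}t^{\alpha-2-\kappa}dt\lesssim\left\lvert t_1^{\alpha-1-\kappa}-t_2^{\alpha-1-\kappa}\right\rvert$ with a harmless constant, giving the claimed bound directly. For $S_t$ a direct integration of $t^{-1-\kappa}$ would produce the constant $1/\kappa$, which blows up at the admissible borderline $\kappa\to0$; to avoid this I would write $t^{-1-\kappa}=t^{-1}\cdot t^{-\kappa}$ and use $t^{-1}\le(t_1\wedge t_2)^{-1}$ on the interval $[t_2,t_1]$, pulling this factor out before integrating $\int_{t_2}^{t_1}t^{-\kappa}dt\lesssim\left\lvert t_1^{1-\kappa}-t_2^{1-\kappa}\right\rvert$. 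This reproduces precisely the stated factor $(t_1\wedge t_2)^{-1}\left\lvert t_1^{1-\kappa}-t_2^{1-\kappa}\right\rvert$.

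The main technical obstacle is the middle step: verifying that $\partial_t$ gains exactly one power of $t^{-1}$ uniformly over all the pieces, i.e. that after differentiation every kernel still obeys an integrable, scalable $(1+\left\lvert\cdot\right\rvert)^{-n-1}$ pointwise estimate. The only delicate point is the $\partial_ta_t$ term, where the identity $\partial_ta_t=-i\left\lvert\xi\right\rvert^{2\beta}b_t$ raises the homogeneity by $2\beta$; this is exactly compensated by the faster decay of $b_t$, so Lemma~\ref{20240514KaKb estimate} still applies under the hypothesis $\theta<2\beta-n$. Once the two derivative bounds are established, the time integration is routine.
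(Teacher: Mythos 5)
Your proposal is correct and follows essentially the same route as the paper: writing the difference via the fundamental theorem of calculus, splitting the $t$-derivative into the term hitting the Mittag-Leffler symbol (using $\partial_t a_t=-i\left\lvert\xi\right\rvert^{2\beta}b_t$) and the term hitting the cutoff, then applying the scaling argument, the kernel bound of Lemma~\ref{20240514KaKb estimate}, Young's inequality, and the same $(t_1\wedge t_2)^{-1}$ pull-out before integrating in time. Your explicit treatment of the $P_t$ case via $\partial_t b_t=t^{\alpha-2}E_{\alpha,\alpha-1}\left(-i\left\lvert\xi\right\rvert^{2\beta}t^\alpha\right)$ merely fills in the step the paper leaves as ``similar.''
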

\begin{proof}[Proof]
Assume $t_1>t_2>0$ without loss of generality. Note that $\frac{d}{dt}a_t(\xi)=-i\left\lvert\xi\right\rvert^{2\beta}b_t(\xi)$. A direct computation leads to
\begin{align*}
&\mathscr{F}^{-1}\left(\left\lvert\xi\right\rvert^\theta\left(a_{t_1}(\xi)\chi_{t_1}(\xi)-a_{t_2}(\xi)\chi_{t_2}(\xi)\right)\right)\\
&=\mathscr{F}^{-1}\left(\int_{t_2}^{t_1}-i\left\lvert\xi\right\rvert^{2\beta}b_\tau(\xi)\chi_\tau(\xi)d\tau\right)+\mathscr{F}^{-1}\left(\int_{t_2}^{t_1}a_\tau(\xi)\frac{\sigma\tau^{\sigma-1}\left\lvert\xi\right\rvert}{M}\chi_1'\left(\frac{\tau^\sigma\left\lvert\xi\right\rvert}{M}\right)d\tau\right)\\
&=:I_1(t_1,t_2,x)+I_2(t_1,t_2,x).
\end{align*}

By scaling, we have
\[ I_1(t_1,t_2,x)=-i\int_{t_2}^{t_1}\tau^{-\sigma(\theta+n)-1}K^{\theta+2\beta}[b]\left(\tau^{-\sigma}x\right)d\tau. \]

Using Lemma $\ref{20240514KaKb estimate}$, we obtain
\[ \left\lvert I_1(t_1,t_2,x)\right\rvert\lesssim\int_{t_2}^{t_1}\tau^{-\sigma(\theta+n)-1}\left(1+\tau^{-\sigma}\left\lvert x\right\rvert\right)^{-n-1}\tau, \]
and hence
\[ \left\lVert I_1(t_1,t_2,\cdot)\right\rVert_{L_x^s}\lesssim\int_{t_2}^{t_1}\tau^{-\sigma\theta-\sigma n\left(\frac{1}{q}-\frac{1}{r}\right)-1}d\tau\lesssim t_2^{-1}\left(t_1^{1-\sigma\theta-\sigma n\left(\frac{1}{q}-\frac{1}{r}\right)}-t_2^{1-\sigma\theta-\sigma n\left(\frac{1}{q}-\frac{1}{r}\right)}\right), \]
where $\frac{1}{r}+1=\frac{1}{s}+\frac{1}{q}$. Similarly arguing as above, we have
\[ \left\lVert I_2(t_1,t_2,\cdot)\right\rVert_{L_x^s}\lesssim t_2^{-1}\left(t_1^{1-\sigma\theta-\sigma n\left(\frac{1}{q}-\frac{1}{r}\right)}-t_2^{1-\sigma\theta-\sigma n\left(\frac{1}{q}-\frac{1}{r}\right)}\right). \]

By Young's inequality, we obtain
\begin{align*}
\left\lVert\left\lvert\nabla\right\rvert^\theta\left(S_{t_1}\chi_{t_1}(D)-S_{t_2}\chi_{t_2}(D)\right)\phi\right\rVert_{L_x^r}&\leq\left\lVert I_1(t_1,t_2,\cdot)*\phi\right\rVert_{L_x^r}+\left\lVert I_2(t_1,t_2,\cdot)*\phi\right\rVert_{L_x^r}\\
&\lesssim t_2^{-1}\left(t_1^{1-\sigma\theta-\sigma n\left(\frac{1}{q}-\frac{1}{r}\right)}-t_2^{1-\sigma\theta-\sigma n\left(\frac{1}{q}-\frac{1}{r}\right)}\right)\left\lVert\phi\right\rVert_{L_x^q}.
\end{align*}

The proof for $\left\lvert\nabla\right\rvert^\theta\left(P_{t_1}\chi_{t_1}(D)-P_{t_2}\chi_{t_2}(D)\right)\phi$ is similar.
\end{proof}
\begin{lemma}
\label{20240505Holder estimates St second}
Let $0\leq\theta<2\beta-n$ for $\beta>\frac{n}{2}$ and $1\leq q\leq r\leq\infty$. Then for any $t_1,t_2>0$, we have
\begin{align*}
&\begin{aligned}
\left\lVert\left\lvert\nabla\right\rvert^{\theta-2\beta}\left(t_1^{-\alpha}\chi_{t_1}^c(D)-t_2^{-\alpha}\right.\right.&\left.\left.\chi_{t_2}^c(D)\right)\phi\right\rVert_{L_x^r}\\
&\lesssim\left(t_1\wedge t_2\right)^{-1}\left\lvert t_1^{1-\sigma\theta-\sigma n\left(\frac{1}{q}-\frac{1}{r}\right)}-t_2^{1-\sigma\theta-\sigma n\left(\frac{1}{q}-\frac{1}{r}\right)}\right\rvert\left\lVert\phi\right\rVert_{L_x^q},
\end{aligned}\\
&\begin{aligned}
\left\lVert\left\lvert\nabla\right\rvert^{\theta-4\beta}\left(t_1^{-\alpha-1}\chi_{t_1}^c(D)-t_2^{-\alpha-1}\right.\right.&\left.\left.\chi_{t_2}^c(D)\right)\phi\right\rVert_{L_x^r}\\
&\lesssim\left\lvert t_1^{\alpha-1-\sigma\theta-\sigma n\left(\frac{1}{q}-\frac{1}{r}\right)}-t_2^{\alpha-1-\sigma\theta-\sigma n\left(\frac{1}{q}-\frac{1}{r}\right)}\right\rvert\left\lVert\phi\right\rVert_{L_x^q}.
\end{aligned}
\end{align*}
\end{lemma}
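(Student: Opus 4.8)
The plan is to run the proof of Lemma~\ref{20240505Holder estimates St first} essentially verbatim, replacing the main symbols $a_t(\xi)\chi_t(\xi)$, $b_t(\xi)\chi_t(\xi)$ by the correction symbols $t^{-\alpha}\lvert\xi\rvert^{\theta-2\beta}\chi_t^c(\xi)$ and $t^{-\alpha-1}\lvert\xi\rvert^{\theta-4\beta}\chi_t^c(\xi)$. Assume without loss of generality that $t_1>t_2>0$. The first step is to write each difference as the integral of a $\tau$-derivative. Since $\chi_\tau^c(\xi)=\chi_1^c(\tau^\sigma\lvert\xi\rvert/M)$, for the first estimate one has
\[
\frac{d}{d\tau}\left(\tau^{-\alpha}\chi_\tau^c(\xi)\right)=-\alpha\tau^{-\alpha-1}\chi_\tau^c(\xi)-\tau^{-\alpha}\chi_1'\left(\frac{\tau^\sigma\lvert\xi\rvert}{M}\right)\frac{\sigma\tau^{\sigma-1}\lvert\xi\rvert}{M},
\]
so that after multiplying by $\lvert\xi\rvert^{\theta-2\beta}$ and applying $\mathscr{F}^{-1}$ the expression $\mathscr{F}^{-1}\left(\lvert\xi\rvert^{\theta-2\beta}(t_1^{-\alpha}\chi_{t_1}^c-t_2^{-\alpha}\chi_{t_2}^c)\right)$ splits as $J_1+J_2$, exactly as $I_1+I_2$ did before: $J_1$ comes from differentiating the power of $\tau$ and $J_2$ from differentiating the cutoff.

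For $J_1$ I would use the scaling already exploited in Lemma~\ref{20240505Decay estimates St second}, namely $\mathscr{F}^{-1}(\lvert\xi\rvert^{\theta-2\beta}\chi_\tau^c)(x)=\tau^{-\sigma(n+\theta-2\beta)}K_{\theta-2\beta}(\tau^{-\sigma}x)$, whence $J_1=-\alpha\int_{t_2}^{t_1}\tau^{-\alpha-1-\sigma(n+\theta-2\beta)}K_{\theta-2\beta}(\tau^{-\sigma}x)\,d\tau$. The crucial arithmetic is $2\beta\sigma=\alpha$, which collapses the $\tau$-exponent to $-\sigma\theta-\sigma n-1$. Since $\theta<2\beta-n$ forces $\theta-2\beta<-n$, Lemma~\ref{20240514Kdelta estimate} gives $\lvert K_{\theta-2\beta}(x)\rvert\lesssim(1+\lvert x\rvert)^{-n-1}$; Young's inequality with $\frac1r+1=\frac1s+\frac1q$ together with the dilation identity $\lVert K_{\theta-2\beta}(\tau^{-\sigma}\cdot)\rVert_{L_x^s}\sim\tau^{\sigma n/s}$ then yields $\lVert J_1*\phi\rVert_{L_x^r}\lesssim\int_{t_2}^{t_1}\tau^{-\sigma\theta-\sigma n(1/q-1/r)-1}\,d\tau\,\lVert\phi\rVert_{L_x^q}$, and bounding $\tau^{-1}\le t_2^{-1}=(t_1\wedge t_2)^{-1}$ on $[t_2,t_1]$ converts this integral into the stated form. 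For $J_2$ the rescaled symbol is $\lvert\eta\rvert^{\theta-2\beta+1}\chi_1'(\lvert\eta\rvert/M)$, which is smooth and supported in the annulus $\{M\le\lvert\eta\rvert\le2M\}$; its inverse Fourier transform is therefore rapidly decreasing and obeys the same $(1+\lvert x\rvert)^{-n-1}$ bound. A direct check shows the $\tau$-exponent of $J_2$ again collapses to $-\sigma\theta-\sigma n-1$, the extra $\tau^\sigma$ produced by differentiating the cutoff being exactly compensated by $2\beta\sigma=\alpha$, so $J_2$ satisfies the same bound and the first estimate follows.

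The second estimate is proved in the same way, now differentiating $\tau^{-\alpha-1}\chi_\tau^c(\xi)$ and carrying the factor $\lvert\xi\rvert^{\theta-4\beta}$. The relevant kernel is $K_{\theta-4\beta}$, and since $\theta<2\beta-n<4\beta-n$ one again has $\theta-4\beta<-n$, so Lemma~\ref{20240514Kdelta estimate} applies. Here the identity $4\beta\sigma=2\alpha$ makes the $\tau$-exponent collapse to $\alpha-2-\sigma\theta-\sigma n(1/q-1/r)$, and integrating $\tau^{\alpha-2-\sigma\theta-\sigma n(1/q-1/r)}$ over $[t_2,t_1]$ produces directly $\lvert t_1^{\alpha-1-\sigma\theta-\sigma n(1/q-1/r)}-t_2^{\alpha-1-\sigma\theta-\sigma n(1/q-1/r)}\rvert$, with no surviving $(t_1\wedge t_2)^{-1}$ prefactor. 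This is precisely why the two displayed inequalities have different shapes, mirroring the $S_t/P_t$ dichotomy in Lemma~\ref{20240505Holder estimates St first}.

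I expect the only genuinely delicate point to be the cutoff-derivative term $J_2$ (and its analogue in the second estimate): one must recognize that it cannot be discarded, identify its kernel as the transform of a compactly supported symbol so that the $(1+\lvert x\rvert)^{-n-1}$ decay is available, and verify that the $\sigma$-arithmetic forces its time-scaling exponent to coincide with that of $J_1$. Everything else is careful bookkeeping with the identities $2\beta\sigma=\alpha$ and $4\beta\sigma=2\alpha$ and routine applications of scaling, Young's inequality, and the pointwise kernel bounds already established.
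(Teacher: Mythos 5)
Your proposal is correct and follows essentially the same route as the paper's proof: writing each difference as the integral of a $\tau$-derivative, splitting into the power-derivative term (the paper's $I_3$) and the cutoff-derivative term ($I_4$), then combining the scaling identity $2\beta\sigma=\alpha$ (resp. $4\beta\sigma=2\alpha$) with the pointwise bound of Lemma~\ref{20240514Kdelta estimate} and Young's inequality. Your treatment of the cutoff term via the compactly supported annular symbol $\left\lvert\eta\right\rvert^{\theta-2\beta+1}\chi_1'\left(\left\lvert\eta\right\rvert/M\right)$ is exactly what the paper's ``similarly'' step tacitly relies on, only spelled out more explicitly.
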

\begin{proof}[Proof]
Assume $t_1>t_2>0$ without loss of generality. By a direct computation, we have
\begin{align*}
&\mathscr{F}^{-1}\left(\left\lvert\xi\right\rvert^{\theta-2\beta}\left(t_1^{-\alpha}\chi_{t_1}^c(\xi)-t_2^{-\alpha}\chi_{t_2}^c(\xi)\right)\right)\\
&=\mathscr{F}^{-1}\left(\int_{t_2}^{t_1}-\alpha\tau^{-\alpha-1}\chi_\tau^c(\xi)\right)+\mathscr{F}^{-1}\left(\int_{t_2}^{t_1}\tau^{-\alpha}\frac{\sigma\tau^{\sigma-1}\left\lvert\xi\right\rvert}{M}\chi_1'\left(\frac{\tau^\sigma\left\lvert\xi\right\rvert}{M}\right)d\tau\right)\\
&=:I_3(t_1,t_2,x)+I_4(t_1,t_2,x).
\end{align*}

By scaling, we have
\[ \left\lvert I_3(t_1,t_2,x)\right\rvert\sim\left\lvert\int_{t_2}^{t_1}\tau^{-\sigma(\theta+n)-1}K_{\theta-2\beta}\left(\tau^{-\sigma}x\right)d\tau\right\rvert, \]
and hence, by Lemma $\ref{20240514Kdelta estimate}$,
\[ \left\lVert I_3(t_1,t_2,\cdot)\right\rVert_{L_x^s}\lesssim t_2^{-1}\left(t_1^{1-\sigma\theta-\sigma n\left(\frac{1}{q}-\frac{1}{r}\right)}-t_2^{1-\sigma\theta-\sigma n\left(\frac{1}{q}-\frac{1}{r}\right)}\right), \]
where $\frac{1}{r}+1=\frac{1}{s}+\frac{1}{q}$. Similarly, we have
\[ \left\lVert I_4(t_1,t_2,\cdot)\right\rVert_{L_x^s}\lesssim t_2^{-1}\left(t_1^{1-\sigma\theta-\sigma n\left(\frac{1}{q}-\frac{1}{r}\right)}-t_2^{1-\sigma\theta-\sigma n\left(\frac{1}{q}-\frac{1}{r}\right)}\right). \]

Then we can complete the proof for $\left\lvert\nabla\right\rvert^{\theta-2\beta}\left(t_1^{-\alpha}\chi_{t_1}^c(D)-t_2^{-\alpha}\chi_{t_2}^c(D)\right)\phi$ by Young's inequality. The proof for $\left\lvert\nabla\right\rvert^{\theta-4\beta}\left(t_1^{-\alpha-1}\chi_{t_1}^c(D)-t_2^{-\alpha-1}\chi_{t_2}^c(D)\right)\phi$ is analogous.
\end{proof}

Arguing similarly as in Lemma $\ref{20240505Holder estimates St second}$, the following lemma holds.
\begin{lemma}
\label{20240505Holder estimates St third}
Let $0\leq\theta<2\beta-n$ for $\beta>\frac{n}{2}$ and $1\leq q\leq r\leq\infty$. Then for any $t_1,t_2>0$, we have
\begin{align*}
&\begin{aligned}
\left\lVert\left\lvert\nabla\right\rvert^\theta O\left(\left\lvert\nabla\right\rvert^{-4\beta}\right)\left(t_1^{-2\alpha}\chi_{t_1}^c(D)-\right.\right.&\left.\left.t_2^{-2\alpha}\chi_{t_2}^c(D)\right)\phi\right\rVert_{L_x^r}\\
&\lesssim\left(t_1\wedge t_2\right)^{-1}\left\lvert t_1^{1-\sigma\theta-\sigma n\left(\frac{1}{q}-\frac{1}{r}\right)}-t_2^{1-\sigma\theta-\sigma n\left(\frac{1}{q}-\frac{1}{r}\right)}\right\rvert\left\lVert\phi\right\rVert_{L_x^q},
\end{aligned}\\
&\begin{aligned}
\left\lVert\left\lvert\nabla\right\rvert^\theta O\left(\left\lvert\nabla\right\rvert^{-6\beta}\right)\left(t_1^{-2\alpha-1}\chi_{t_1}^c(D)-\right.\right.&\left.\left.t_2^{-2\alpha-1}\chi_{t_2}^c(D)\right)\phi\right\rVert_{L_x^r}\\
&\lesssim\left\lvert t_1^{\alpha-1-\sigma\theta-\sigma n\left(\frac{1}{q}-\frac{1}{r}\right)}-t_2^{\alpha-1-\sigma\theta-\sigma n\left(\frac{1}{q}-\frac{1}{r}\right)}\right\rvert\left\lVert\phi\right\rVert_{L_x^q}.
\end{aligned}
\end{align*}
\end{lemma}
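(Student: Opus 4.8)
The plan is to follow the template of Lemma~\ref{20240505Holder estimates St second} almost verbatim: the only changes are that the homogeneous symbol $|\xi|^{\theta-2\beta}$ used there is replaced by the remainder symbols attached to the $O$-operators, and that the kernel bound of Lemma~\ref{20240514Kdelta estimate} is replaced by its analogue for those remainders. As in that lemma, I would assume $t_1>t_2>0$ without loss of generality and reduce the whole estimate to the scalar multiplier level on the Fourier side.

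For the first estimate I would write the symbol difference as a time integral through the fundamental theorem of calculus,
\[ |\xi|^\theta O(|\xi|^{-4\beta})\left(t_1^{-2\alpha}\chi_{t_1}^c(\xi)-t_2^{-2\alpha}\chi_{t_2}^c(\xi)\right)=\int_{t_2}^{t_1}\partial_\tau\left[|\xi|^\theta O(|\xi|^{-4\beta})\tau^{-2\alpha}\chi_\tau^c(\xi)\right]d\tau, \]
and expand the $\tau$-derivative by the product rule into a piece carrying $\tau^{-2\alpha-1}$ and a piece carrying $\partial_\tau\chi_\tau^c(\xi)=\chi_1'(\tau^\sigma|\xi|/M)\sigma\tau^{\sigma-1}|\xi|/M$, the latter being supported in a fixed annulus. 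These play exactly the roles of $I_3$ and $I_4$ in Lemma~\ref{20240505Holder estimates St second}. The second estimate would be treated identically, with $\tau^{-2\alpha}$ replaced by $\tau^{-2\alpha-1}$ and $O(|\xi|^{-4\beta})$ by $O(|\xi|^{-6\beta})$.

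Next I would handle each piece by the scaling $\xi=\tau^{-\sigma}\eta$, which collapses $\chi_\tau^c(\xi)$ to $\chi_1^c(|\eta|/M)$ and, via $|\xi|^{2\beta}\tau^\alpha=|\eta|^{2\beta}$, freezes the residual $\tau$-dependence inside the remainder symbol; the first piece thus becomes $\int_{t_2}^{t_1}\tau^{-1-\sigma\theta-\sigma n}\mathcal{K}(\tau^{-\sigma}x)\,d\tau$ for a $\tau$-independent kernel $\mathcal{K}$ whose symbol decays like $|\eta|^{\theta-4\beta}$ on the support of $\chi_1^c$, while the cutoff piece produces an annulus-supported, hence Schwartz, kernel carrying the same $\tau$-power. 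The reduction of the $\tau$-exponent to $-1-\sigma\theta-\sigma n$ uses $2\sigma\beta=\alpha$, and the parallel computation with $6\sigma\beta=3\alpha$ gives $\alpha-2-\sigma\theta-\sigma n$ for the second estimate. Passing to the spatial norm through $\|\mathcal{K}(\tau^{-\sigma}\cdot)\|_{L_x^s}=\tau^{\sigma n/s}\|\mathcal{K}\|_{L_x^s}$ with $1/r+1=1/s+1/q$ raises these exponents by $\sigma n/s=\sigma n-\sigma n(1/q-1/r)$; integrating in $\tau$ and applying Young's inequality then yields the two displayed bounds, where for the first estimate I would additionally bound $\tau^{-1}\le(t_1\wedge t_2)^{-1}$ under the integral to extract the prefactor $(t_1\wedge t_2)^{-1}$, whereas the second comes out directly from $\int_{t_2}^{t_1}\tau^{\alpha-2-\sigma\theta-\sigma n(1/q-1/r)}\,d\tau$.

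The one genuinely new ingredient, and the step I expect to be the main obstacle, is the pointwise kernel bound $|\mathcal{K}(x)|\lesssim(1+|x|)^{-n-1}$ for the kernels coming from the $O$-symbols, i.e. the analogue of Lemma~\ref{20240514Kdelta estimate} with $|\eta|^\delta$ replaced by a symbol decaying like $|\eta|^{\theta-4\beta}$ (resp.\ $|\eta|^{\theta-6\beta}$). The hypothesis $0\le\theta<2\beta-n$ forces $\theta-4\beta<-n$ and $\theta-6\beta<-n$, so the radial reduction to a Bessel integral used in Lemmas~\ref{20240514KaKb estimate} and~\ref{20240514Kdelta estimate} still applies, \emph{provided} the remainder symbol and its relevant derivatives obey the expected decay $|\partial_\eta^\gamma O(|\eta|^{-4\beta})|\lesssim|\eta|^{-4\beta-|\gamma|}$. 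Establishing this symbol information by differentiating the asymptotic expansion of the Mittag-Leffler function is the only point at which one must do more than transcribe the proof of Lemma~\ref{20240505Holder estimates St second}; the annulus-supported kernel from the cutoff-derivative piece requires no such care.
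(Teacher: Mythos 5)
Your proposal is correct and takes essentially the same approach as the paper: the paper offers no separate proof of Lemma \ref{20240505Holder estimates St third}, stating only that it follows by arguing as in Lemma \ref{20240505Holder estimates St second}, which is exactly the decomposition you describe — fundamental theorem of calculus in $\tau$ splitting into an $I_3$-type piece (derivative hitting the power of $\tau$) and an $I_4$-type piece (derivative hitting the cutoff), then scaling, a kernel bound in the spirit of Lemma \ref{20240514Kdelta estimate} for the remainder symbols, and Young's inequality. One remark on the step you single out as the main obstacle: the derivative bounds $\left\lvert\partial_\eta^\gamma O\left(\left\lvert\eta\right\rvert^{-4\beta}\right)\right\rvert\lesssim\left\lvert\eta\right\rvert^{-4\beta-\left\lvert\gamma\right\rvert}$ are not actually needed, since the Mittag-Leffler remainder is a radial symbol (a function of $\tau^\alpha\left\lvert\xi\right\rvert^{2\beta}$ alone) and the Bessel-asymptotics argument of Lemmas \ref{20240514KaKb estimate} and \ref{20240514Kdelta estimate} uses only magnitude bounds on the radial symbol, so the magnitude bound coming directly from the asymptotic expansion suffices.
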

\bibliography{Reference}
\end{document}